\newcommand{\N}{{\mathds{N}}}
\newcommand{\Z}{{\mathds{Z}}}
\newcommand{\R}{{\mathds{R}}}
\newcommand{\C}{{\mathds{C}}}
\newcommand{\T}{{\mathds{T}}}
\newcommand{\D}{{\mathfrak{D}}}
\newcommand{\A}{{\mathfrak{A}}}
\newcommand{\B}{{\mathfrak{B}}}
\newcommand{\Nbar}{\overline{\N}}
\newcommand{\Lip}{{\mathsf{L}}}
\newcommand{\TLip}{{\mathsf{T}}}
\newcommand{\Hilbert}{{\mathscr{H}}}
\newcommand{\dpropinquity}[1]{{\mathsf{\Lambda}^\ast_{#1}}}
\newcommand{\covpropinquity}[1]{{\mathsf{\Lambda}^{\mathrm{cov}}_{#1}}}
\newcommand{\spectralpropinquity}[1]{{\mathsf{\Lambda}^{\mathsf{spec}}_{#1}}}
\newcommand{\Kantorovich}[1]{{\mathsf{mk}_{#1}}}
\newcommand{\KantorovichLength}[1]{{\mathrm{mk}\ell_{#1}}}
\newcommand{\KantorovichDist}[1]{\mathrm{mkD}_{#1}}
\newcommand{\Haus}[1]{{\mathsf{Haus}\!\left[{#1}\right]\,}}
\newcommand{\StateSpace}{{\mathscr{S}}}
\newcommand{\MongeKant}{{Mon\-ge-Kan\-to\-ro\-vich metric}}
\newcommand{\qcms}{quantum compact metric space}
\newcommand{\QCMS}{Quantum Compact Metric Space}
\newcommand{\unit}{1}
\newcommand{\sa}[1]{{\mathfrak{sa}\left({#1}\right)}}
\newcommand{\LargeIso}{{\mathsf{ISO}}}
\newcommand{\smallIso}{{\mathsf{Iso}}}
\newcommand{\dom}[1]{{\operatorname*{dom}\left({#1}\right)}}
\newcommand{\diam}[2]{{\mathrm{diam}\left({#1},{#2}\right)}}
\newcommand{\qdiam}[2]{{\mathrm{qdiam}\left({#1},{#2}\right)}}
\newcommand{\norm}[2]{\left\|{#1}\right\|_{#2}}
\newcommand{\targetsettunnel}[3]{{\mathfrak{t}_{#1}\left({#2}\middle\vert{#3}\right)}}
\newcommand{\CDN}{{\mathsf{DN}}}
\newcommand{\worknote}[1]{}
\newcommand{\opnorm}[3]{{\left|\mkern-1.5mu\left|\mkern-1.5mu\left| {#1} \right|\mkern-1.5mu\right|\mkern-1.5mu\right|_{#3}^{#2}}}
\newcommand{\alg}[1]{{\mathfrak{#1}}}
\newcommand{\Aut}[1]{{\mathrm{Aut}\left({#1}\right)}}
\newcommand{\AdRep}[1]{{\mathrm{Ad}_{#1}}}
\newcommand{\ModState}[1]{\widehat{\StateSpace}}
\newcommand{\closure}[1]{\mathrm{cl}\left({#1}\right)}
\renewcommand{\geq}{\geqslant}
\renewcommand{\leq}{\leqslant}
\newcommand{\Dirac}{{\slashed{D}}}
\newcommand{\solenoid}{\text{\calligra solenoid}}
\theoremstyle{plain}
\newtheorem{theorem}{Theorem}[section]
\newtheorem*{theorem*}{Theorem}
\newtheorem{corollary}[theorem]{Corollary}
\newtheorem{lemma}[theorem]{Lemma}
\newtheorem{theorem-definition}[theorem]{Theorem-Definition}
\newtheorem{hypothesis}[theorem]{Hypothesis}
\theoremstyle{definition}
\newtheorem{definition}[theorem]{Definition}
\newtheorem{notation}[theorem]{Notation}
\newtheorem{convention}[theorem]{Convention}
\theoremstyle{remark}
\newtheorem{example}[theorem]{Example}
\newtheorem{remark}[theorem]{Remark}
\numberwithin{equation}{section}
\begin{document}

\title[]{ Isometry groups of  inductive limits of metric spectral triples and  Gromov-Hausdorff convergence  }
\author{Jacopo Bassi}
\address{Dipartimento di Matematica, Universit\`a di Roma Tor Vergata, 00133 Rome, Italy.} 
\email{bssjcp01@uniroma2.it}
\author{Roberto Conti}
\address{Dipartimento SBAI, Sapienza Universit\`a di Roma, 00161 Rome, Italy.} 
\email{roberto.conti@sbai.uniroma1.it}
\author{Carla Farsi}
\address{Department of Mathematics, University of Colorado at Boulder, Boulder, CO 80309-0395, USA}
\email{carla.farsi@colorado.edu}

\author{Fr\'{e}d\'{e}ric Latr\'{e}moli\`{e}re}
\address{Department of Mathematics \\ University of Denver \\ Denver CO 80208}
\email{frederic@math.du.edu}
\urladdr{http://www.math.du.edu/\symbol{126}frederic}

\date{\today}
\subjclass[2000]{Primary:  46L89, 46L30, 58B34.}
\keywords{Noncommutative metric geometry, isometry groups, quantum Gromov-Hausdorff distance, inductive limits of C*-algebras, Monge-Kantorovich distance, Quantum Metric Spaces, Spectral Triples, compact C*-metric spaces, AF algebras, twisted group C*-algebras.}

\begin{abstract}
  In this paper we study the groups of isometries and the set of bi-Lipschitz automorphisms of spectral triples  from a metric viewpoint, in the propinquity framework of Latr\'emoli\`ere. In particular we prove that these groups  and sets are compact in the automorphism group of the spectral triple $C^*$-algebra with respect to the Monge-Kantorovich metric, which induces the topology of pointwise convergence. We then prove a necessary and sufficient condition for the convergence of the actions of various groups of isometries, in the sense of the covariant version of the Gromov-Hausdorff propinquity --- a noncommutative analogue of the Gromov-Hausdorff distance --- when working in the context of inductive limits of {\qcms s} and metric spectral triples. We illustrate our work with examples including AF algebras and noncommutative solenoids.
\end{abstract}
\maketitle

\tableofcontents

\section{Introduction}

Spectral triples, originally introduced by Connes in 1985, are a powerful tool that allows the generalization of Riemannian geometry  to the realm of noncommutative spaces and $C^*$-algebras. The classical example is that of the Dirac operator acting on the spinor bundle of a closed manifold. Since then, many spectral triples have been introduced  by several authors for many different types of (unital) $C^*$-algebras. The importance of spectral triples lies in their well-established power in generalizing   differential and spectral geometry to various new situations, from  spaces of leaves of foliations, to geometries on quantum tori, quantum spheres, and other quantum spaces, with rich applications in   mathematical physics. In particular, the class of spectral triples obtained as inductive limits of other spectral triples have attracted a lot of attention in recent years. For instance, spectral triples over AF algebras were first introduced by Christensen-Ivan in \cite{CI}; see \cite{CIL}, \cite{CIS}, \cite{LaSa}, \cite{AguLat}, \cite{BaDa} (among many other references) for further investigations of these spectral triples and generalizations. In a quite different direction, spectral triples on inductive limits of twisted group C*-algebras are introduced in \cite{FaLaPa} by the last two authors, together with J. Packer, with examples including the Bunce-Deddens algebras and the noncommutative solenoids (inductive limits of quantum tori) --- $C^\ast$-algebras which are not AF. These constructions  were inspired by the work in  \cite{FlGh} and \cite{FaLaLaPa}. In this paper, we study the geometry of certain natural groups of isometries associated with {\qcms s} and with metric spectral triples, with a particular focus on inductive limits of spectral triples. 

\medskip

Spectral triples induce an extended pseudo-metric on the state space of their underlying (unital) C*-algebras, called the {\it Connes metric}. Of prime interest in noncommutative metric geometry are the spectral triples whose Connes' pseudo-metric is actually a metric that induces the weak* topology; hence the name {\it metric spectral triples}. Metric spectral triples in turn provide examples of {\qcms s}, which are noncommutative analogues of the algebras of Lipschitz functions over a compact metric space. A {\qcms} consists of a unital C*-algebra endowed with the noncommutative analogue of a Lipschitz seminorm, whose main property is that it induces by duality a distance on the state space of the C*-algebra, which metrizes the weak* topology, in a manner akin to the Kantorovich metric. This metric is referred to as the \emph{\MongeKant} of the {\qcms}. 

Inspired by Connes' ideas, in  2004 Rieffel formalized in \cite{Rieffel00} the idea of convergence of {\qcms s} (with a more general definition involving order unit spaces rather than C*-algebras) by introducing the quantum Gromov-Hausdorff distance, a form of the Gromov-Hausdorff distance between the state spaces of {\qcms s} endowed with the \MongeKant.  Latr\'emoli\`ere extended  Rieffel's ideas, incorporating the $C^*$-algebra multiplicative structure by requiring that the analogue of a Lipschitz seminorm for {\qcms s} satisfy a form of the Leibniz inequality. With that crucial addition, Latr\'emoli\`ere was able to prove that distance zero between two {\qcms s} is equivalent to the underlying C*-algebras being $\ast$-isomorphic, together with their state spaces being isometric. This new  distance was called {\it (dual) propinquity} \cite{Latremoliere13b}. In a sequence of papers, Latr\'emoli\`ere has vastly extended the domain of the propinquity to include, besides  {\qcms s}, such structures as $C^*$-bundles \cite{Latremoliere16c,Latremoliere16d}, $C^*$-correspondences \cite{Latremoliere16d}, Lipschitz dynamical systems (which involve actions of groups on \qcms s) \cite{Latremoliere18b,Latremoliere18c}, and metric spectral triples \cite{Latremoliere18g}.

In all of these cases, distance zero is equivalent to the given structures being isomorphic in a natural sense. Moreover, these metrics are complete on large classes. In particular, the \emph{spectral propinquity}, which is defined for the class of metric spectral triples  \cite{Latremoliere18g}, is zero between two metric spectral triples if and only if they are unitarily equivalent. Moreover this metric dominates the propinquity, and thus implies metric convergence of the quantum compact metric spaces associated to  metric spectral triples. This latest extension  opened up the  possibility of  using the tools of metric geometry on the class of metric spectral triples.

An interesting question arises as to what isometries might be in the context of noncommutative metric geometry, and in particular (metric) spectral triples. For this, Latr\'emoli\`ere introduced the notions of bi-Lipschitz morphisms and {\it full quantum isometry} for quantum compact metric spaces; see \cite{Latremoliere16b}. For  spectral triples, the notion of isometry is quite significant and in the classical example   of the Dirac operator acting on the spinor bundle of a closed manifold, isometries should be the standard Riemannian metric isometries. So the questions of what  isometries for spectral triples need to be, and what is their relation  to the metric structures defined on spectral triples by Latr\'emoli\`ere naturally arises in this context. 

\medskip
Various notions of isometries of spectral triples  have appeared in the literature in the setting either of classical groups  or of quantum groups.  In this work we will focus on the  two classical  isometry   groups, called $\smallIso$ and $ \LargeIso$, see Definitions \eqref{small-iso-def}, \eqref{Large-iso-def}.  For completeness,  we note that there also exists a notion of quantum group of isometries see e.g. \cite{GoBh} and the references, as well as \cite{BGS-2011} for 0-dimensional manifolds quantum case.
Both  $\smallIso$ and $ \LargeIso$  are  subgroups of the $\ast$-automorphism group of the underlying $C^*$-algebra: the group $\LargeIso$ is the isometry group of the associated Monge-Kantorovich metric structure, while $\smallIso$, introduced by Park in \cite{Efton95}, is more restricted and requires a unitary that implements the given $C^*$-algebra automorphism, which  also commutes with the Dirac; in general,  $\smallIso$ is always a subgroup of $\LargeIso$.  In addition, our emphasis will be on spectral triples obtained as inductive limits, and in this setting we  use the covariant propinquity for actions of proper groups on {\qcms s} to prove that certain   groups of isometries   converge to  projective limits of groups of isometries. This work requires us to first prove that the group of isometries of a {\qcms} is compact.  For the Riemannian  manifold case,  noncommutative tori and other examples, $\smallIso$ was studied by  Park in  \cite{Efton95, Park2}, for spectral triples over twisted reduced group $C^*$-algebras associated to a length function as defined in \cite{Connes89} by Long-Wu in \cite{LoWu},  for Goffeng-Mesland  \cite{GM} spectral triples  on  Cuntz algebras by Conti-Rossi  in \cite{CoRo},  and for  the Christensen-Ivan \cite{CI} spectral triples of AF-algebras by Bassi-Conti in \cite{Conti21}. Conti-Farsi consider both $\smallIso$ and $\LargeIso$ for Kellendonk-Savinien spectral triples in \cite{CoFa}.

In this paper, we start with  the  more general framework of bi-Lipschitz automorphisms, and  then
consider the $\LargeIso$ and $\smallIso$ groups of isometries of spectral triples as subsets of the bi-Lipschitz automorphisms.
The first main results of this paper are  that  the set of the $K$-bi-Lipschitz automorphisms, and the 	groups $\smallIso$ and $\LargeIso$   are always closed (and hence compact) in the Monge-Kantorovich metric  $\KantorovichDist{\Lip}$ defined on the automorphism group $\Aut{\A}$  of $\A$.

\begin{theorem*} (Theorem \eqref{compact-Iso-thm}, Corollary \eqref{cor:ISO-compact}, and Theorem \eqref{compact-iso-group-thm}) Let  $(\A,\Lip)$ be a {\qcms}. Let $\Aut{\A}$ be the group of $\ast$-automorphisms of $\A$, whose topology of pointwise convergence is metrized by the distance
  \begin{equation*}
    \forall\alpha,\beta\in\Aut{\A} \quad \KantorovichDist{\Lip}(\alpha,\beta) \coloneqq \sup\left\{ \norm{\alpha(a)-\beta(a)}{\A} : a\in\dom{\Lip}, \Lip(a) \leq 1 \right\} \text.
  \end{equation*}

  The following subsets and subgroups of $\Big(\Aut{\A}, \KantorovichDist{\Lip}\Big) $ are compact. 
	\begin{enumerate}
		\item The set of bi-Lipschitz automorphisms $\Aut{\A,\Lip,K}$, with $K \geq 1$:
		\begin{equation*}
		\Aut{\A,\Lip,K} \coloneqq \left\{ \alpha \in \Aut{\A} : \text{$\alpha$ is $K$-bi-Lipschitz} \right\}.
		\end{equation*}	
		\item The subgroup $\LargeIso(\A,\Lip)\coloneqq \{ \alpha \in \Aut{\A}:\ \text{is a  full quantum isometry.}\}$.
		\item For a metric spectral triple,   $(\A,\Hilbert,\Dirac)$ the subgroup $\smallIso(\A,\Hilbert,\Dirac)$ defined by:
		\begin{multline*}
		\smallIso(\A,\Hilbert,\Dirac) \coloneqq \\ \left\{ \alpha \in \mathrm{Aut}(\A) : \exists U \text{ unitary on $\Hilbert$ s.t. \ } \AdRep{U} = \alpha, U\dom{\Dirac}=\dom{\Dirac}, [\Dirac,U] = 0 \right\} \text.
		\end{multline*}
	\end{enumerate}
\end{theorem*}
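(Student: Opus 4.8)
The plan is to prove that each of the three sets is a sequentially compact --- hence, since $(\Aut{\A},\KantorovichDist{\Lip})$ restricts to a genuine metric space on each, compact --- subset of $\Aut{\A}$. Throughout I would use three features of a {\qcms} $(\A,\Lip)$: every $\ast$-automorphism of $\A$ is an isometry of $(\A,\norm{\cdot}{\A})$; the seminorm $\Lip$ is lower semicontinuous; and, by Rieffel's criterion, for all $r,D\geq 0$ the set $\{a\in\dom{\Lip}:\Lip(a)\leq r,\ \norm{a}{\A}\leq D\}$ is totally bounded in $\A$. I would also use that $\A$ is separable, which follows from weak$^\ast$-metrizability of $\StateSpace(\A)$, and that for $\Lip(a)\leq 1$ there is a scalar $\lambda$ with $\norm{a-\lambda\unit}{\A}\leq\diam{\StateSpace(\A)}{\Kantorovich{\Lip}}$.

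For item (1), fix $K\geq 1$ and a sequence $(\alpha_n)_n$ in $\Aut{\A,\Lip,K}$. First I would pick, using density of $\dom{\Lip}$ and separability, a countable family $\{a_j\}_j\subseteq\dom{\Lip}$ with $\Lip(a_j)\leq 1$ and $\sup_j\norm{a_j}{\A}\leq D$ for a constant $D$ depending only on $(\A,\Lip)$, whose linear span together with $\unit$ is dense in $\A$. Since each $\alpha_n$ is a unital $K$-bi-Lipschitz $\ast$-automorphism, every $\alpha_n(a_j)$ lies in the fixed totally bounded set $\{c:\Lip(c)\leq K,\ \norm{c}{\A}\leq D\}$, so a diagonal extraction produces a subsequence $(\alpha_{n_k})_k$ along which $\alpha_{n_k}(a_j)$ converges in norm for every $j$; by linearity, density, and the uniform equi-isometry of the $\alpha_{n_k}$, the limit $\alpha(c)\coloneqq\lim_k\alpha_{n_k}(c)$ exists for every $c\in\A$ and is a unital isometric $\ast$-endomorphism of $\A$. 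To see $\alpha$ is onto I would apply the same extraction to $(\alpha_{n_k}^{-1})_k\subseteq\Aut{\A,\Lip,K}$, get a further subsequence with $\alpha_{n_k}^{-1}\to\beta$ pointwise, and observe that $\norm{\alpha_{n_k}(\beta(c)-\alpha_{n_k}^{-1}(c))}{\A}=\norm{\beta(c)-\alpha_{n_k}^{-1}(c)}{\A}\to 0$ forces $\alpha\circ\beta=\beta\circ\alpha=\mathrm{id}_{\A}$, so $\alpha\in\Aut{\A}$. Lower semicontinuity of $\Lip$, applied to $\alpha$ and to $\alpha^{-1}=\beta$, then upgrades this to $\tfrac1K\Lip(a)\leq\Lip(\alpha(a))\leq K\Lip(a)$ and $\alpha(\dom{\Lip})=\dom{\Lip}$, so $\alpha\in\Aut{\A,\Lip,K}$. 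Finally, since automorphisms fix $\C\unit$ and each $a$ with $\Lip(a)\leq 1$ agrees modulo $\C\unit$ with an element of the totally bounded set $S\coloneqq\{b:\Lip(b)\leq 1,\ \norm{b}{\A}\leq\diam{\StateSpace(\A)}{\Kantorovich{\Lip}}\}$, I get $\KantorovichDist{\Lip}(\alpha_{n_k},\alpha)=\sup_{b\in S}\norm{\alpha_{n_k}(b)-\alpha(b)}{\A}$, and this tends to $0$ because the uniformly $2$-Lipschitz maps $b\mapsto\alpha_{n_k}(b)-\alpha(b)$ converge pointwise on the totally bounded set $S$. Hence $\Aut{\A,\Lip,K}$ is compact.

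Item (2) is then immediate: a full quantum isometry of $(\A,\Lip)$ is by definition an automorphism $\alpha$ with $\Lip\circ\alpha=\Lip$, i.e. a $1$-bi-Lipschitz automorphism, so $\LargeIso(\A,\Lip)=\Aut{\A,\Lip,1}$ is compact. For item (3) I would first record the inclusion $\smallIso(\A,\Hilbert,\Dirac)\subseteq\LargeIso(\A,\Lip)$: if $\alpha=\AdRep{U}$ with $U$ unitary, $U\dom{\Dirac}=\dom{\Dirac}$ and $[\Dirac,U]=0$, a direct computation on $\dom{\Dirac}$ gives $[\Dirac,\alpha(a)]=U[\Dirac,a]U^\ast$, hence $\Lip(\alpha(a))=\Lip(a)$. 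Since $\LargeIso(\A,\Lip)$ is compact by item (2), it then suffices to show $\smallIso(\A,\Hilbert,\Dirac)$ is closed in it. Given $(\alpha_n)_n$ in $\smallIso(\A,\Hilbert,\Dirac)$ with $\alpha_n\to\alpha\in\LargeIso(\A,\Lip)$, I would choose implementing unitaries $U_n$ with $U_n\dom{\Dirac}=\dom{\Dirac}$, $[\Dirac,U_n]=0$, and use that $\Dirac$ has compact resolvent to decompose $\Hilbert=\bigoplus_{\lambda\in\spectrum{\Dirac}}\Hilbert_\lambda$ into the finite-dimensional eigenspaces of $\Dirac$. Each $U_n$ commutes with the spectral projections of $\Dirac$ and so restricts to a unitary of every $\Hilbert_\lambda$; thus $(U_n)_n$ lies in the group $\prod_\lambda\mathcal{U}(\Hilbert_\lambda)$, which is compact for the strong operator topology, and after passing to a subsequence $U_{n_k}\to U$ strongly, with $U$ a unitary commuting with $\Dirac$ and preserving $\dom{\Dirac}$. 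Since convergence is in norm on each finite-dimensional $\Hilbert_\lambda$, also $U_{n_k}^\ast\to U^\ast$ strongly, hence $\AdRep{U_{n_k}}(a)\to\AdRep{U}(a)$ strongly for every $a\in\A$; comparing with $\alpha_{n_k}(a)=\AdRep{U_{n_k}}(a)\to\alpha(a)$ in norm identifies $\alpha=\AdRep{U}\in\smallIso(\A,\Hilbert,\Dirac)$, so $\smallIso(\A,\Hilbert,\Dirac)$ is closed in $\LargeIso(\A,\Lip)$ and therefore compact.

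The step I expect to be the main obstacle is the unitary extraction in item (3): one must produce a single unitary implementing the limit automorphism that simultaneously preserves $\dom{\Dirac}$ and commutes with $\Dirac$, and this is exactly where the compact resolvent hypothesis does the work, confining the relevant unitaries to a product of compact finite-dimensional unitary groups --- without it the unitaries commuting with $\Dirac$ form a group with no reason to be compact. A smaller but genuine subtlety, already present in item (1), is that a pointwise limit of $\ast$-automorphisms is a priori only a $\ast$-endomorphism, so surjectivity of the limit has to be recovered by carrying along the inverse maps $\alpha_n^{-1}$; lower semicontinuity of $\Lip$ is likewise essential for keeping the bi-Lipschitz constant under limits.
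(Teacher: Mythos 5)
Your argument is correct. For items (1) and (2) it is essentially the paper's proof in a different packaging: the paper establishes compactness of $\Aut{\A,\Lip,K}$ as ``totally bounded (quoting Theorem (\ref{KantorovichDist-thm})) plus complete,'' whereas you run a direct sequential-compactness extraction using total boundedness of the sets $\left\{c\in\dom{\Lip}:\Lip(c)\leq K,\ \norm{c}{\A}\leq D\right\}$; but the load-bearing steps --- reducing to the ball $S$ of elements with $\Lip\leq 1$ and norm at most $\qdiam{\A}{\Lip}$, recovering surjectivity of the limit by carrying along $(\alpha_n^{-1})_n$, and invoking lower semicontinuity of $\Lip$ for the bi-Lipschitz bounds --- are identical, and $\LargeIso(\A,\Lip)=\Aut{\A,\Lip,1}$ in both treatments.

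For item (3) you take a genuinely different route. The paper works with the set $B=\{\xi\in\dom{\Dirac}:\norm{\xi}{\Hilbert}+\norm{\Dirac\xi}{\Hilbert}\leq 1\}$, which is compact because $\Dirac$ has compact resolvent, shows $U_nB\subseteq B$, applies Arzel\`a--Ascoli to extract a limit $V$ on $B$, and then must separately extend $V$ to $\Hilbert$, prove it is surjective (via a second extraction from $(U_n^\ast)_n$), verify $[\Dirac,V]=0$ using closedness of $\Dirac$, and check $\alpha=\AdRep{V}$. You instead observe that each $U_n$ commutes with the spectral projections of $\Dirac$ and therefore lives in the group $\prod_\lambda\mathcal{U}(\Hilbert_\lambda)$ of block-diagonal unitaries over the finite-dimensional eigenspaces, which is SOT-compact; unitarity of the limit, preservation of $\dom{\Dirac}$, commutation with $\Dirac$, and $\ast$-strong convergence (hence $U_{n_k}^\ast\to U^\ast$) then come for free from the block structure. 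Your version is shorter and localizes the use of the compact resolvent hypothesis in one place; the paper's version avoids spectral theory and would survive in settings where one only knows $B$ is compact. Two routine facts you assert without proof should be recorded if you write this up: that $[\Dirac,U_n]=0$ together with $U_n\dom{\Dirac}=\dom{\Dirac}$ yields commutation with $(\Dirac+i)^{-1}$ and hence with the spectral measure, and that the product topology on $\prod_\lambda\mathcal{U}(\Hilbert_\lambda)$ coincides with the strong operator topology (via the usual tail estimate $\sum_{\lambda\notin F}\norm{\xi_\lambda}{\Hilbert}^2<\varepsilon$), so that Tychonoff compactness transfers to SOT-sequential compactness over the countably many eigenvalues.
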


By \cite[Theorem 2.22]{FaLaPa}, given an inductive sequence of {\qcms s}, whose limit is a {\qcms}, we have that the existence of  a special type of $\ast$-automorphism on the inductive limit which is well behaved with respect to the quantum metrics, called a \emph{bridge builder}, is equivalent, under a very natural assumption, to the convergence of the given sequence of {\qcms s} for the propinquity. Examples of such construction were noncommutative solenoids and bounded Bunce-Deddens algebras, see \cite{FaLaPa}.

Now, given that (metric) spectral triples were thus  enriched with their isometry groups structures, a natural question   to ask was if  one could extend the original   bridge builder construction of \cite{FaLaPa}  in such a way it insured the   convergence of the associated isometry groups. This is what we are doing in this work.
Covariant bridge builders are indeed full quantum isometries of the inductive limit space that are covariant with respect to the actions, and behave well with respect to the approximations in the limit.
See  Definition \eqref{cov-bridge-def} for details.

With this, we are now ready to state  the second main results of this paper.

\begin{theorem*}[Theorems \eqref{covariant-bridge-builder-thm} and \eqref{covariant-bridge-builder-converse-thm}] For each $n\in\Nbar \coloneqq \N\cup\{\infty\}$, let $(\A_n,\Lip_n)$ be a {\qcms}, such that $\A_\infty = \closure{\bigcup_{n\in\N} \A_n}$, where $(\A_n)_{n\in\N}$ is an increasing (for $\subseteq$) sequence of C*-subalgebras of $\A_\infty$, with the unit of $\A_\infty$ in $\A_0$.
	Assume that 	
	$(\A_n,\Lip_n)_{n\in\Nbar}$ are  {\qcms s},  and for each $n\in\Nbar$, let $\alpha_n$ be an action of a group $G_n$ on $(\A_n,\Lip_n)$ by full quantum isometries, satisfying Hypothesis (\ref{cov-bridge-hyp}).
	
	If there exists a covariant bridge builder for this data, then
	\begin{equation}\label{cv-eq}
	\lim_{n\rightarrow\infty} \covpropinquity{}((\A_n,\Lip_n,\alpha_n,G_n),(\A_\infty,\Lip_\infty,\alpha_\infty,G_\infty)) = 0 \text.
	\end{equation}
Conversely, if we assume that there exists $M > 0$ such that, for all $n\in\N$, we have $\frac{1}{M}\Lip_n\leq\Lip_\infty\leq M\Lip_n$ on $\dom{\Lip_n}$, and if Equation (\ref{cv-eq}) holds, then there exists a covariant bridge builder.
\end{theorem*}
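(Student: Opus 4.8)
The plan is to prove the two directions separately, leveraging the non-covariant bridge builder theorem \cite[Theorem 2.22]{FaLaPa} as a template and upgrading all constructions to the covariant setting.

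For the forward direction (Theorem \eqref{covariant-bridge-builder-thm}), I would proceed as follows. First, unfold the definition of the covariant propinquity $\covpropinquity{}$: it is computed by building, for each $n$, a \emph{covariant tunnel} (or journey/trek, depending on the chosen formalism) from $(\A_n,\Lip_n,\alpha_n,G_n)$ to $(\A_\infty,\Lip_\infty,\alpha_\infty,G_\infty)$ whose length controls both the metric part and the action part (the actions must be intertwined up to a small error measured through the proper group metric). Given a covariant bridge builder $\pi$ — a full quantum isometry of $\A_\infty$ that is $G$-equivariant in the appropriate sense and behaves well on the filtration $(\A_n)_{n\in\N}$ — I would use $\pi$ exactly as in \cite{FaLaPa} to manufacture a bridge (in Latr\'emoli\`ere's sense) from $(\A_n,\Lip_n)$ to $(\A_\infty,\Lip_\infty)$ with length tending to $0$; the key new point is to verify that this bridge is \emph{covariant}, i.e. that the pivot/anchors can be chosen $G_n$- and $G_\infty$-invariant (or invariant up to the allowed error), which follows because $\pi$ intertwines $\alpha_n$ and $\alpha_\infty$ and because the isometry group is compact, hence one may average. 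Then Hypothesis (\ref{cov-bridge-hyp}) is invoked to guarantee that the group-level discrepancy (the failure of $\alpha_n$ and $\alpha_\infty$ to literally agree, controlled in the Hausdorff distance on compact subgroups of $\Aut{\A_\infty}$ with respect to $\KantorovichDist{\Lip_\infty}$) also goes to $0$; here the compactness theorem stated above (Theorems \eqref{compact-Iso-thm}, \eqref{compact-iso-group-thm}) is essential, since it ensures the relevant group objects live in a compact metric space and the Hausdorff distance between them is finite and well behaved. Assembling the metric estimate and the group estimate gives a covariant tunnel of length $O(\text{error}_n) \to 0$, hence \eqref{cv-eq}.

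For the converse (Theorem \eqref{covariant-bridge-builder-converse-thm}), I would start from a sequence of covariant tunnels $\tau_n$ with $\tunnellength{\tau_n}{} \to 0$ realizing \eqref{cv-eq}, together with the uniform equivalence $\frac{1}{M}\Lip_n \leq \Lip_\infty \leq M\Lip_n$ on $\dom{\Lip_n}$. The strategy mirrors the converse half of \cite[Theorem 2.22]{FaLaPa}: use the target sets of the tunnels to transport elements of $\A_n$ into $\A_\infty$, and extract, via a diagonal/compactness argument, a limiting $\ast$-homomorphism which one shows is a full quantum isometry $\pi$ of $\A_\infty$ compatible with the filtration — this is the non-covariant bridge builder. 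The additional task is to check $\pi$ is covariant: since each $\tau_n$ is a covariant tunnel, it carries intertwining data for $\alpha_n$ and $\alpha_\infty$ with error bounded by $\tunnellength{\tau_n}{}$, and passing to the limit (again using that $\smallIso$ or $\LargeIso$ sits in a compact metric space, so convergence of automorphisms along the tunnels can be extracted) yields $\pi\circ\alpha_n = \alpha_\infty\circ\pi$ in the required asymptotic sense on $\bigcup_n \A_n$, which upgrades to the exact covariance condition of Definition \eqref{cov-bridge-def} on the dense subalgebra and then everywhere by continuity. The uniform equivalence constant $M$ is what makes the extraction uniform in $n$, preventing the Lipschitz seminorms from degenerating in the limit.

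The main obstacle I anticipate is the covariance bookkeeping in the limit extraction for the converse: one must simultaneously control (i) the convergence of the transported elements of $\A_n$ inside $\A_\infty$, (ii) the convergence of the Lipschitz seminorm estimates, and (iii) the convergence of the conjugated automorphisms $\tau_n$-transported copies of $\alpha_n$ to $\alpha_\infty$ — and these three must be compatible along a single subsequence. The compactness results quoted above are precisely what rescue this: they make the automorphism group a compact metric space under $\KantorovichDist{\Lip}$, so a diagonal argument produces one subsequence handling all three. A secondary, more technical difficulty is ensuring the pivot and anchor elements of the covariant bridges in the forward direction can genuinely be taken invariant (or nearly so) under the group actions; this is where I would lean on the averaging argument over the compact isometry group together with Hypothesis (\ref{cov-bridge-hyp}), and I would expect the cleanest route is to first establish it for the easier of $\smallIso$/$\LargeIso$ and then deduce the general case.
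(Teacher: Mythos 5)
Your outline captures the right global shape (use the covariant bridge builder to build tunnels; for the converse, recover the non-covariant bridge builder from \cite[Theorem 2.22]{FaLaPa} and then check covariance from the tunnel data), but both halves rest on misreadings of the definitions, and the actual constructions are missing. In the forward direction, the covariant propinquity is not certified by making ``pivots/anchors $G$-invariant'' and there is no averaging over a compact isometry group anywhere in the argument --- indeed the groups $G_n$ here are arbitrary proper metric groups acting by full quantum isometries, so compactness of $\LargeIso$ or $\smallIso$ plays no role in this theorem. What is actually required is a tunnel $(\D_n,\TLip_n,p_n,q_n)$ together with the state-level covariant reach estimates of Section~\ref{sec:cov-Prop}. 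The paper produces this explicitly: $\D_n\coloneqq\A_\infty\oplus\A_n$ with
\begin{equation*}
\TLip_n(a,b) \coloneqq \max\left\{ \Lip_\infty(a),\ \Lip_n(b),\ \tfrac{1}{2\varepsilon}\sup_{g\in G_\infty[\varepsilon^{-1}]}\norm{\pi\circ\alpha_\infty^{g}(a)-\alpha_n^{f_n(g)}(b)}{\A_\infty} \right\},
\end{equation*}
verifies that the coordinate projections are quantum isometries using the two clauses of Definition~\eqref{cov-bridge-def} together with Lemma~\eqref{cov-bridge-lemma}, and then checks the covariant reach by pairing with Hahn--Banach extensions of states and $\psi\coloneqq\varphi'\circ\pi$. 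Your sketch contains none of this; the ``averaging'' step you propose is not obviously meaningful (averaging a pivot over a group action need not preserve the bridge's reach or height) and in any case addresses a condition that is not part of the definition.

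In the converse, you aim to prove that $\pi$ exactly intertwines the actions (``$\pi\circ\alpha_n=\alpha_\infty\circ\pi$ \dots\ upgrades to the exact covariance condition''). That is not what Definition~\eqref{cov-bridge-def} asks for, and it is generally unattainable: $\alpha_n$ is an action of $G_n\neq G_\infty$ on the subalgebra $\A_n$ only. The required conclusion is the pair of approximate estimates $\sup_{g\in G_n[r_n]}\norm{\pi(\alpha_\infty^{\xi_n(g)}(a))-\alpha_n^{g}(b)}{\A_\infty}<\varepsilon\Lip_n(b)$ (and its counterpart via $f_n$), which the paper obtains without any diagonal extraction: it takes $b$ in the target set $\targetsettunnel{n}{a}{\Lip_\infty(a)}$, uses the state $\psi$ furnished by the covariant reach inequality \eqref{eq:reach2} of the tunnel, and bounds $|\varphi(\pi(\alpha_\infty^{\xi_n(g)}(a))-\alpha_n^g(b))|$ by three terms each controlled by $\varepsilon/6$, then passes from states to norm since the difference is self-adjoint. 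The uniform constant $M$ enters only through the citation of \cite[Theorem 2.22]{FaLaPa} to produce the underlying (non-covariant) bridge builder; it is not used to ``prevent the seminorms from degenerating'' in a separate compactness extraction. To repair your proposal you would need to (i) write down a concrete tunnel and prove the quantum-isometry and covariant-reach conditions for it, and (ii) replace the exact-intertwining target in the converse by the approximate conditions of Definition~\eqref{cov-bridge-def} and derive them from the covariant reach of the given tunnels.
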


In this paper, the main application of Theorem \eqref{covariant-bridge-builder-thm} is the following convergence result:

\begin{theorem*}(Theorem \eqref{thm:iso-conv})
	For each $n\in\Nbar \coloneqq \N\cup\{\infty\}$, let $(\A_n,\Lip_n)$ be a {\qcms} such that $\A_\infty = \closure{\bigcup_{n\in\N} \A_n}$, where $(\A_n)_{n\in\N}$ is an increasing (for $\subseteq$) sequence of C*-subalgebras of $\A_\infty$, with the unit of $\A_\infty$ in $\A_0$. Let $\pi$ be a full quantum isometry which is also a bridge builder. 
	The group $\LargeIso(A_\infty,\Lip_\infty|\pi)$ defined by:
	\begin{equation*}
	\LargeIso(\A_\infty,\Lip_\infty|\pi) \coloneqq \left\{ \alpha\in \LargeIso(\A_\infty,\Lip_\infty) : \pi\circ\alpha=\alpha\circ\pi\text{ and }\forall n \in \N: \quad \alpha_{|\A_n} \in \LargeIso(\A_n,\Lip_n) \right\} \text.
	\end{equation*}
	is a compact subgroup of the $\ast$-automorphism $\Aut{\A_\infty}$.
	
	Furthermore, if $G_\infty$ is a closed subgroup of $\LargeIso(\A_\infty,\Lip_\infty|\pi)$ and if, for all $n\in\N$, we set $G_n \coloneqq \left\{ \alpha_{|\A_n} \in \Aut{\A_n} : \alpha\in G_\infty \right\}$, then $G_n$ is a closed group, and
	\begin{equation*}
	\lim_{n\rightarrow\infty} \covpropinquity{}((\A_n,\Lip_n,G_n),(\A_\infty,\Lip_\infty,G_\infty)) = 0 \text,
	\end{equation*}
	where $\covpropinquity{}$ is the covariant propinquity (see \cite{Latremoliere18b}).
\end{theorem*}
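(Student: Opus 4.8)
The plan is to obtain the two compactness assertions by exhibiting the groups in question as closed subsets of objects already known to be compact, and then to obtain the convergence statement by verifying that the present situation is an instance of Theorem \eqref{covariant-bridge-builder-thm}, with $\pi$ playing the role of a covariant bridge builder. Throughout, I will use that $\bigl(\Aut{\A_\infty},\KantorovichDist{\Lip_\infty}\bigr)$ behaves like a topological group on $\LargeIso(\A_\infty,\Lip_\infty)$: composition is jointly continuous there because $\ast$-automorphisms are isometric and full quantum isometries preserve $\Lip_\infty$, and inversion is even $1$-Lipschitz on $\LargeIso(\A_\infty,\Lip_\infty)$ since, writing $a=\alpha(b)$ with $\Lip_\infty(b)=\Lip_\infty(a)$, one has $\norm{\alpha^{-1}(a)-\alpha'^{-1}(a)}{\A_\infty}=\norm{\alpha(b)-\alpha'(b)}{\A_\infty}$.

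First I would prove compactness of $\LargeIso(\A_\infty,\Lip_\infty|\pi)$. By Corollary \eqref{cor:ISO-compact}, $\LargeIso(\A_\infty,\Lip_\infty)$ is compact, so it suffices to intersect it with closed conditions. The condition $\pi\circ\alpha=\alpha\circ\pi$ cuts out a closed set because $\alpha\mapsto\pi\circ\alpha$ and $\alpha\mapsto\alpha\circ\pi$ are continuous. For each $n$, the set $\{\alpha\in\LargeIso(\A_\infty,\Lip_\infty):\alpha(\A_n)=\A_n\}$ is closed, since $\A_n$ is norm-closed (so a limit of automorphisms preserving $\A_n$ preserves it, and likewise for inverses, using continuity of inversion); on that set the restriction map $\alpha\mapsto\alpha_{|\A_n}\in\Aut{\A_n}$ is continuous, because convergence for $\KantorovichDist{\Lip_\infty}$ forces pointwise convergence on all of $\A_\infty$, hence on $\A_n$, hence convergence for $\KantorovichDist{\Lip_n}$; therefore the preimage under this map of the (closed, again by Corollary \eqref{cor:ISO-compact}) set $\LargeIso(\A_n,\Lip_n)$ is closed. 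Intersecting all these with $\LargeIso(\A_\infty,\Lip_\infty)$ gives exactly $\LargeIso(\A_\infty,\Lip_\infty|\pi)$, which is thus compact; it is a subgroup because composition and inversion preserve being a full quantum isometry, commuting with $\pi$, and restricting into each $\LargeIso(\A_n,\Lip_n)$. The assertion that each $G_n$ is closed is then immediate: $G_\infty$ is a closed subgroup of the compact group $\LargeIso(\A_\infty,\Lip_\infty|\pi)$, hence compact; every $\alpha\in G_\infty$ satisfies $\alpha(\A_n)=\A_n$, so $r_n:\alpha\mapsto\alpha_{|\A_n}$ is a well-defined continuous group homomorphism $G_\infty\to\Aut{\A_n}$, whence $G_n=r_n(G_\infty)$ is a subgroup and, being the continuous image of a compact set in the Hausdorff space $\bigl(\Aut{\A_n},\KantorovichDist{\Lip_n}\bigr)$, it is compact, hence closed.

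For the convergence, I would apply Theorem \eqref{covariant-bridge-builder-thm} to the data $(\A_n,\Lip_n,\alpha_n,G_n)_{n\in\Nbar}$, where $\alpha_n\colon G_n\hookrightarrow\LargeIso(\A_n,\Lip_n)\subseteq\Aut{\A_n}$ is the tautological action; each $G_n$ is compact, hence a proper group, by the previous paragraph, and $\alpha_n$ is by construction a continuous action by full quantum isometries. Two checks remain. First, Hypothesis \eqref{cov-bridge-hyp}: the restriction maps $G_\infty\twoheadrightarrow G_{n+1}\twoheadrightarrow G_n$ are well-defined continuous surjective homomorphisms, well-defined because elements of $G_{n+1}$, being restrictions of elements of $G_\infty\subseteq\LargeIso(\A_\infty,\Lip_\infty|\pi)$, preserve $\A_n$; they intertwine $\alpha_{n+1}$ with $\alpha_n$ over the inclusion $\A_n\hookrightarrow\A_{n+1}$, since for $g\in G_{n+1}$ lifted to $\widetilde g\in G_\infty$ one has $\alpha_{n+1}(g)_{|\A_n}=\widetilde g_{|\A_n}=\alpha_n(g_{|\A_n})$; combined with $\A_\infty=\closure{\bigcup_{n\in\N}\A_n}$ and the unit lying in $\A_0$, this gives Hypothesis \eqref{cov-bridge-hyp}. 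Second, $\pi$ is a covariant bridge builder in the sense of Definition \eqref{cov-bridge-def}: it is, by hypothesis, a full quantum isometry of $(\A_\infty,\Lip_\infty)$ and a bridge builder, so the only extra requirement is the covariance condition, and this is precisely the relation $\pi\circ\alpha=\alpha\circ\pi$ for all $\alpha\in G_\infty$, which holds by the very definition of $\LargeIso(\A_\infty,\Lip_\infty|\pi)\supseteq G_\infty$ and descends to compatibility with each $\alpha_n$ via the homomorphisms above. Theorem \eqref{covariant-bridge-builder-thm} then yields $\lim_{n\to\infty}\covpropinquity{}((\A_n,\Lip_n,G_n),(\A_\infty,\Lip_\infty,G_\infty))=0$.

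The genuine mathematical content is carried entirely by Theorem \eqref{covariant-bridge-builder-thm} and, behind it, the construction of covariant tunnels from a covariant bridge builder; the work here is bookkeeping to match the abstract hypotheses to the concrete tower of subalgebras. I expect the main obstacle to be the second check above: confirming that, in this inductive-limit setting, "$\pi$ is a bridge builder and commutes with $G_\infty$" is literally what Definition \eqref{cov-bridge-def} demands of a covariant bridge builder — i.e.\ that no additional compatibility between $\pi$ and the $\A_n$ beyond what "bridge builder" already encodes is required — and, relatedly, that the tautological actions $\alpha_n$ together with the restriction homomorphisms assemble into a datum of exactly the shape required by Hypothesis \eqref{cov-bridge-hyp}, with compactness of the $G_n$ supplying the properness assumed there. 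The closedness arguments of the first two paragraphs are routine, using only the lower semicontinuity of the $\Lip_n$, the norm-closedness of the $\A_n$, continuity of inversion on $\LargeIso(\A_\infty,\Lip_\infty)$, and the fact that $\KantorovichDist{\Lip_n}$ metrizes pointwise convergence on $\Aut{\A_n}$.
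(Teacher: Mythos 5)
Your overall strategy matches the paper's: compactness of $\LargeIso(\A_\infty,\Lip_\infty|\pi)$ and of the $G_n$ is obtained exactly as you describe (closed subset of the compact group $\LargeIso(\A_\infty,\Lip_\infty)$, continuous image of a compact group under restriction), and the convergence is ultimately routed through the covariant bridge builder machinery. However, there is a genuine gap in your third paragraph, and it sits exactly where you yourself flag the ``main obstacle'': you assert that the surjectivity and intertwining of the restriction homomorphisms, ``combined with $\A_\infty=\closure{\bigcup_{n}\A_n}$ and the unit lying in $\A_0$, gives Hypothesis (\ref{cov-bridge-hyp}).'' It does not. Hypothesis (\ref{cov-bridge-hyp}) demands that $(f_n,\xi_n)$ be an $\varepsilon_n$-iso-iso with $\varepsilon_n\to 0$, i.e.\ the quantitative conditions of Equations (\ref{rn-epsilon-iso-eq-1}) and (\ref{rn-epsilon-iso-eq-2}) relating the metric $\KantorovichDist{\Lip_\infty}$ on $G_\infty$ to the metric $\KantorovichDist{\Lip_n}$ on $G_n$. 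These two metrics are suprema over \emph{different} unit balls --- $\{a\in\dom{\Lip_\infty}:\Lip_\infty(a)\leq 1\}$ versus $\{b\in\dom{\Lip_n}:\Lip_n(b)\leq 1\}$ --- and there is no a priori reason for $\KantorovichDist{\Lip_n}(\alpha_{|\A_n},\beta_{|\A_n})$ to be close to $\KantorovichDist{\Lip_\infty}(\alpha,\beta)$. Establishing this is the bulk of the paper's proof: one uses the bridge builder property of $\pi$ to replace any $a$ with $\Lip_\infty(a)\leq 1$ by some $b\in\dom{\Lip_n}$ with $\Lip_n(b)\leq 1$ and $\norm{\pi(a)-b}{\A_\infty}<\varepsilon/2$ (and conversely), together with the commutation of $\pi$ with every element of $G_\infty$, to derive the two-sided estimate $\left|\KantorovichDist{\Lip_n}(\beta\circ\beta',f_n(\alpha))-\KantorovichDist{\Lip_\infty}(\xi_n(\beta)\circ\xi_n(\beta'),\alpha)\right|<\varepsilon$ and its companion. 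Without this computation your appeal to Theorem (\ref{covariant-bridge-builder-thm}) is not licensed.

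A second, smaller imprecision: the claim that the covariant bridge builder condition of Definition (\ref{cov-bridge-def}) ``is precisely the relation $\pi\circ\alpha=\alpha\circ\pi$'' is not literally correct. The definition requires a single $b$ approximating $\pi(\alpha_\infty^g(a))$ by $\alpha_n^{f_n(g)}(b)$ \emph{uniformly} over $g$ in the ball $G_\infty[r_n]$; deducing this from ``$\pi$ is a bridge builder, commutes with the action, and $\alpha_\infty^g$ restricts to $\alpha_n^{f_n(g)}$'' is the content of Corollary (\ref{bridge-builder-commute-cor}), which is the route the paper actually takes (after first verifying the iso-iso property above). The deduction is short precisely because the actions here are isometric and the restriction maps intertwine exactly, but it still has to be written down; it is not definitionally vacuous. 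Your compactness and closedness arguments in the first two paragraphs are correct and essentially identical to the paper's.
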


In Section\eqref{sec:applications} we provide some applications of the results of Section  \eqref{sec:prop-isom-gps}. In particular, when the identity is a covariant bridge builder (see Corollaries \eqref{id-cov-cor} and \eqref{id-cov-small-cor}), the main problem is reduced to  the extension of quantum isometries from a subalgebra to an algebra. Even though such extensions are not always possible, we specialize to the case of inner automorphisms, which can always be extended. This viewpoint allows us to show in Theorem \eqref{conv-AF} that in the case of inductive limits of metric spectral triples associated to AF algebras \cite{CI}, the  ISO group of the limit  is the limit of the ISO groups of the factors. In the case of UHF algebras, more is possible, and the limit result extends to the groups Iso as well, in accordance with \cite{Conti21}. That is,
if $\A_\infty$ is a UHF algebra, with $\A_n = \otimes_{j=0}^n \alg{M}(\delta_j)$ where $\delta_0=1$, $(\delta_n)_{n\in\N}$ a sequence of natural numbers, and $\alg{M}(d)$ is the C*-algebra of $d\times d$ matrices, then for all $n\in\N$, define:
\begin{equation*}
\smallIso(\A_n,L^2(\A_n,\varphi),(\Dirac_\lambda)_{|\A_n}) = \left\{ \otimes_{j=0}^n \AdRep{u_j} : \forall j \in \{0,\ldots,d\} \quad u_j\in\alg{M}(\delta_j), u_j^\ast=u_j^{-1} \right\} \text,
\end{equation*}
and
\begin{equation*}
\smallIso(\A_\infty,L^2(\A_\infty,\varphi),\Dirac_\lambda) = \left\{ \alpha \in \Aut{\A_\infty} : \alpha_{|\A_n} \in \smallIso(\A_n,L^2(\A_n,\varphi),\Dirac_\lambda) \right\} \text.
\end{equation*}
Then:
\begin{equation*}
\lim_{n\rightarrow\infty} \covpropinquity{}((\A_n,\Lip_n,\smallIso(\A_n,L^2(\A_n,\varphi),\Dirac_n), (\A_\infty,\Lip_\infty,\smallIso(\A_\infty,L^2(\A_\infty,\varphi),\Dirac_\lambda))) = 0 \text.
\end{equation*}

In Section 4 we also consider spectral triples on noncommutative solenoids and Bunce-Deddens algebras, in which case we obtain convergence for the dual action, see Section \eqref{sec:dual-actions}.

\subsection*{Acknowledgments}

J.B. was supported by the grant \lq \lq Algebre di Operatori e Teoria Quantistica dei Campi", CUP: E83C18000100006; he also acknowledges the University of Rome \lq \lq Tor Vergata" funding OAQM, CUP: E83C22001800005.
R.C. was partially supported by Sapienza, Progetti di Ricerca Scientifica 2019 and  2020.
This work was also partially supported by C. F.'s Simons Foundation Collaboration grants \#523991. C.F. also thanks  the Dept. SBAI, Sapienza  Universit\`a di Roma, for their kind hospitality.


\section{The Isometry Groups of a Metric Spectral Triple}\label{sec:isom-metric-sp-tr}

There are at least two interesting groups of automorphisms associated with a metric spectral triple which can be referred to as groups of isometries. As metric spectral triples give rise to {\qcms s}, the larger of these two groups consists of all full quantum isometric automorphisms for the quantum metric. On the other hand, one may only restrict one's attention to those isometries which are directly compatible with the spectral triple itself, i.e. automorphisms which are spatially implemented by unitaries which commute with the Dirac operator. In this section, we study the basic topology of both of these groups. As our goal is to study the covariant convergence of certain closed subgroups of isometries on {\qcms s}, it is important to check that both our groups of isometries are compact in the natural topology, which for our purpose, is the topology of pointwise convergence, which is metrized by a form of the {\MongeKant} using the underlying quantum metric \cite{Latremoliere16b}. These results are established in Theorem \eqref{compact-Iso-thm}, Corollary \eqref{cor:ISO-compact}, and Theorem \eqref{compact-iso-group-thm}. In passing, we will prove a compactness result for sets of bi-Lipschitz morphisms.

\subsection{The Isometry Group of a \QCMS}\label{sec:isom-qcms}

A {\qcms} is an analogue of the algebra of Lipschitz functions over a compact metric space, designed to provide a quantum version of compact metric spaces in the same spirit as unital C*-algebras generalize compact Hausdorff spaces and spectral triples generalize Riemannian manifolds. 

\begin{notation}
	For any unital $C^\ast$-algebra $\A$, we denote the space $\{a\in\A:a=a^\ast\}$ of self-adjoint elements of $\A$ by $\sa{\A}$. Moreover, the state space of $\A$ is denoted by $\StateSpace(\A)$. The norm of $\A$ is denoted by $\norm{\cdot}{\A}$. 
\end{notation}

\begin{definition}[{\cite{Connes89,Rieffel98a,Rieffel99,Latremoliere13,Latremoliere15}}]\label{qcms-def}
  A \emph{\qcms} $(\A,\Lip)$ is an ordered pair where $\A$ is a unital C*-algebra and $\Lip$ is a seminorm defined on a dense Jordan-Lie subalgebra $\dom{\Lip}$ of $\sa{\A}$ such that:
  \begin{enumerate}
  \item $\{ a \in \sa{\A} : \Lip(a) = 0 \} = \R\unit_\A$,
  \item The {\MongeKant} $\Kantorovich{\Lip}$, defined on the state space $\StateSpace(\A)$ of $\A$ by, for all $\varphi,\psi \in \StateSpace(\A)$:
    \begin{equation*}
      \Kantorovich{\Lip}(\varphi,\psi) \coloneqq \sup\left\{ \left|\varphi(a)-\psi(a)\right| : a\in\dom{\Lip},\Lip(a)\leq 1 \right\}
    \end{equation*}
    metrizes the weak* topology when restricted to $\StateSpace(\A)$;
  \item there exists $\Omega\geq 1$ and $\Omega' \geq 0$ such that $\Lip$ has the $(\Omega,\Omega')$-quasi-Leibniz property, i.e. for all $a,b \in \dom{\Lip}$:
    \begin{equation*}
      \max\left\{\Lip\left(\frac{ab+ba}{2}\right),\Lip\left(\frac{ab-ba}{2i}\right)\right\} \leq \Omega(\norm{a}{\A}\Lip(b) + \Lip(a) \norm{b}{\A}) + \Omega' \Lip(a)\Lip(b)\text;
    \end{equation*}
  \item $\left\{ a \in \dom{\Lip} : \Lip(a) \leq 1 \right\}$ is closed in $\sa{\A}$.
  \end{enumerate}
\end{definition}

By \cite[Theorem 1.9]{Rieffel98a}, we note that Condition (2) and Condition (3) are equivalent to the single condition that there exists a state $\mu\in \StateSpace(\A)$ of $\A$ such that the set $\{ a\in\dom{\Lip} : \Lip(a)\leq 1,\mu(a)=0\}$ is compact in $\sa{\A}$.

We shall use the following  convenient convention.
\begin{convention}\label{dom-convention}
  If $L : D\rightarrow \R$ is a seminorm over a dense subspace $D$ of some Banach space $E$, then we set $\dom{L} = D$ and $L(z) = \infty$ for all $z\in E\setminus D$. We use the standard conventions that, for $x \in \R$: $\infty > x$, $\infty+x=x+\infty=\infty$ for all $x\geq 0$, and $x\infty=\infty$ for $x>0$, while $0\infty=0$.
\end{convention}

Since the state spaces of {\qcms s} are indeed compact metric spaces, they have finite diameter; this observation plays a useful role in our work.
\begin{notation}
  If $(X,d)$ is a metric space, then we define the {\it diameter of $X$} by  $\diam{X}{d}\coloneqq\sup\{ d(x,y):x,y \in X \}$. If $B\subseteq E$ is a subset of a normed vector space $E$, then we define $\diam{B}{E}$ or $\diam{B}{\norm{\cdot}{E}}$ to be the diameter of $B$ for the metric induced by the norm $\norm{\cdot}{E}$ of $E$.
  If $(\A,\Lip)$ is a {\qcms}, then we set
  \begin{equation*}
    \qdiam{\A}{\Lip} \coloneqq \diam{\StateSpace(\A)}{\Kantorovich{\Lip}}\text.
  \end{equation*}
\end{notation}

Quantum compact metric spaces are objects of several categories, where the arrows are $\ast$-morphisms with additional compatibility with the quantum metric data.

\begin{definition}[{\cite{Rieffel99,Rieffel00,Latremoliere16b}}]\label{Lipschitz-morphism-def}
  Let $(\A,\Lip_\A)$ and $(\B,\Lip_\B)$ be two {\qcms s}.
  
  A \emph{Lipschitz morphism} $\pi : (\A,\Lip_\A) \rightarrow (\B,\Lip_\B)$ is a $\ast$-morphism from $\A$ to $\B$ such that $\pi(\dom{\Lip_\A})\subseteq\dom{\Lip_\B}$.

  A \emph{quantum isometry} $\pi : (\A,\Lip_\A)\rightarrow(\B,\Lip_\B)$ is a Lipschitz morphism such that $\pi$ is surjective, and
  \begin{equation*}
    \forall b\in\dom{\Lip_\B} \quad \Lip_\B(b) = \inf\left\{ \Lip_\A(a) : \pi(a) = b \right\} \text. 
  \end{equation*}

  A \emph{full quantum isometry} $\pi : (\A,\Lip_\A) \rightarrow (\B,\Lip_\B)$ is a quantum isometry such that $\pi$ is a $\ast$-isomorphism, and $\pi^{-1} : (\B,\Lip_\B)\rightarrow(\A,\Lip_\A)$ is also a quantum isometry.
\end{definition}

\begin{remark}
  With our Convention (\ref{dom-convention}), we note that, for instance, $\pi:(\A,\Lip_\A)\rightarrow(\B,\Lip_\B)$ is a quantum isometry if, and only if, $\pi$ is a surjective $\ast$-morphism from $\A$ to $\B$ such that
  \begin{equation*}
    \forall b \in \sa{\B} \quad \Lip_\B(b) = \inf\left\{ \Lip_\A(a) : \pi(a) = b \right\} \text.
  \end{equation*}
\end{remark}

One can construct a  category in which the objects are {\qcms s} and the Lipschitz morphisms are the arrows, since composition of Lipschitz morphisms is a  Lipschitz morphism, and the identity is  a Lipschitz morphism, too. As shown in \cite[Proposition 3.7]{Rieffel00}, the composition of quantum isometries is also a quantum isometry (in our context, our isometries are $\ast$-morphisms, rather than unital positive linear maps as in \cite{Rieffel00}, but the argument about their composition carries).

As shown in \cite[Corollary 2.3]{Latremoliere16b}, a $\ast$-morphism $\pi : \A\rightarrow\B$ is a Lipschitz morphism if, and only if, there exists $K > 0$ such that for all $a\in\sa{\A}$, we have $\Lip_\B\circ\pi(a) \leq K \Lip_\A(a)$. Moreover, $\pi$ is a full quantum isometry if, and only if, $\pi(\dom{\Lip_\A}) = \dom{\Lip_\B}$ and $\Lip_\B\circ\pi(a) = \Lip_\A(a)$ for all $a\in\dom{\Lip_\A}$. While full quantum isometries are our focus and are the proper notion of isomorphism for our purpose, we also will work with the weaker notion of a bi-Lipschitz isomorphism, defined as follows.
\begin{definition}
  For $K\geq 1$, a \emph{$K$-bi-Lipschitz isomorphism} $\alpha : (\A,\Lip_\A)\rightarrow(\B,\Lip_\B)$ between two {\qcms s} $(\A,\Lip_\A)$ and $(\B,\Lip_\B)$, is a $\ast$-isomorphism from $\A$ to $\B$ such that $\alpha(\dom{\Lip_\A}) = \dom{\Lip_\B}$ and, for all $a \in \dom{\Lip_\A}$,
  \begin{equation*}
    \frac{1}{K} \Lip_\A(a) \leq \Lip_\B(\alpha(a)) \leq K\, \Lip_\A (a) \text.
  \end{equation*}
\end{definition}

In particular, a $\ast$-isomorphism is a full quantum isometry if, and only if, it is $1$-bi-Lipschitz. We also observe that if $\alpha$ is a $K$-bi-Lipschitz isomorphism, then so is $\alpha^{-1}$. Indeed, since $\alpha(\dom{\Lip_\A})=\dom{\Lip_\B}$, we conclude $\alpha^{-1}(\dom{\Lip_\B}) = \dom{\Lip_\A}$. For all $b \in \dom{\Lip_\B}$, we then have:
\begin{equation*}
  \frac{1}{K}\Lip_\A(\alpha^{-1}(b)) \leq \underbracket[1pt]{\Lip_\B(\alpha(\alpha^{-1}(b)))}_{=\Lip_\B(b)}  \leq K \Lip_\A(\alpha^{-1}(b))
\end{equation*}
So $\alpha^{-1}$ is also $K$-bi-Lipschitz.
Moreover,  if $\alpha: ({\mathfrak A},L_{\mathfrak A}) \to ({\mathfrak B},L_{\mathfrak B})$ is K bi-Lipschitz and $\beta: ({\mathfrak B},L_{\mathfrak B}) \to ({\mathfrak C},L_{\mathfrak C})$ is H-bi-Lipschitz then $\beta \circ \alpha$ is KH-bi-Lipschitz.

There exists a natural Monge-Kantorovich metric on the group $\Aut{\A}$ of $\ast$-automorphisms of a unital C*-algebra $\A$ endowed with a Lipschitz seminorm, the definition of which we now recall from {\cite[Theorem 5.1]{Latremoliere16b}}.
\begin{definition}[{\cite{Latremoliere16b}}]\label{Kantorovich-morphism-def}
  Let $(\A,\Lip)$ be a {\qcms}. For any two $\ast$-automorphisms $\alpha$ and $\beta$ of $\A$, we define the {\it Monge-Kantorovich metric}:
  \begin{equation}\label{eq:def-MK-for-autos}
    \KantorovichDist{\Lip}(\alpha,\beta) \coloneqq \sup\left\{ \norm{\alpha(a)-\beta(a)}{\A} : a\in\dom{\Lip}, \Lip(a) \leq 1 \right\} \text.
  \end{equation}
\end{definition}

The  following basic topological properties of the Monge-Kantorovich  metric on the group of $\ast$-automorphisms were established in \cite{Latremoliere16b}.

\begin{theorem}[{\cite[Theorem 5.2]{Latremoliere16b}}]\label{KantorovichDist-thm}
  If $(\A,\Lip)$ is a {\qcms}, then $\KantorovichDist{\Lip}$ is a metric on the group $\Aut{\A}$ of $\ast$-automorphisms of $\A$, which metrizes the topology of strong convergence, i.e. for a sequence $(\alpha_n)_{n\in\N}$ in $\Aut{\A}$ and $\alpha \in \mathrm{Aut}(\A)$, we have  $\lim_{n\rightarrow\infty}\KantorovichDist{\Lip}(\alpha_n,\alpha) = 0$ if, and only if $\lim_{n\rightarrow\infty} \norm{\alpha_n(a)-\alpha(a)}{\A} = 0$ for all $a\in\A$. Moreover, if $C>0$, then the set $\left\{\alpha\in\Aut{\A} : \Lip\circ\alpha\leq C \Lip\right\}$ is totally bounded for $\KantorovichDist{\Lip}$.
\end{theorem}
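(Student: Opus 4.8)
The plan is to prove the three assertions in turn, letting the unitality of $\ast$-automorphisms and the compactness of a single distinguished Lipschitz ball carry the weight. Fix, by the criterion of \cite[Theorem 1.9]{Rieffel98a} recalled after Definition \eqref{qcms-def}, a state $\mu\in\StateSpace(\A)$ for which $K_\mu \coloneqq \{ a\in\dom{\Lip} : \Lip(a)\leq 1,\ \mu(a)=0\}$ is norm-compact in $\sa{\A}$, and set $R \coloneqq \sup_{a\in K_\mu}\norm{a}{\A} < \infty$. The observation used throughout is that for any $\ast$-automorphisms $\alpha,\beta$ and any $a\in\dom{\Lip}$, the element $a_0 \coloneqq a-\mu(a)\unit$ satisfies $\Lip(a_0)=\Lip(a)$, $\mu(a_0)=0$, and---since $\alpha,\beta$ are unital---$\alpha(a)-\beta(a)=\alpha(a_0)-\beta(a_0)$. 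Hence the supremum defining $\KantorovichDist{\Lip}(\alpha,\beta)$ is unchanged when taken over $K_\mu$ only, so $\KantorovichDist{\Lip}(\alpha,\beta)=\sup_{a\in K_\mu}\norm{\alpha(a)-\beta(a)}{\A}$; as $\ast$-automorphisms are isometric, this is at most $2R$, hence finite.

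With this formula, the metric axioms are routine: symmetry, non-negativity, and the triangle inequality follow directly from the supremum-of-norms form. For separation, if $\KantorovichDist{\Lip}(\alpha,\beta)=0$ then $\alpha$ and $\beta$ agree on every $a$ with $\Lip(a)\leq 1$, hence by scaling on all of $\dom{\Lip}$ (constants being fixed by any unital map); since $\dom{\Lip}$ is dense in $\sa{\A}$ and $\ast$-automorphisms are norm-continuous, $\alpha=\beta$ on $\sa{\A}$, and therefore on $\A$.

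For the topological statement, if $\KantorovichDist{\Lip}(\alpha_n,\alpha)\to 0$ then $\norm{\alpha_n(a)-\alpha(a)}{\A}\to 0$ first for $a\in\dom{\Lip}$ (by scaling to $\Lip(a)\leq 1$) and then for all $a\in\A$ by a three-$\epsilon$ density argument using that each automorphism is contractive. Conversely, assume $\alpha_n\to\alpha$ pointwise. By the reduction above, $\KantorovichDist{\Lip}(\alpha_n,\alpha)=\sup_{a\in K_\mu}\norm{\alpha_n(a)-\alpha(a)}{\A}$, and the maps $a\mapsto\alpha_n(a)-\alpha(a)$ are uniformly $2$-Lipschitz; pointwise convergence of a uniformly equicontinuous family on the compact set $K_\mu$ is uniform (a finite $\epsilon$-net argument), so $\KantorovichDist{\Lip}(\alpha_n,\alpha)\to 0$.

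The third assertion is the substantive one, and I would obtain it from the Arzel\`a--Ascoli theorem. Put $S\coloneqq\{\alpha\in\Aut{\A}:\Lip\circ\alpha\leq C\Lip\}$ and identify each $\alpha\in S$ with its restriction $\alpha|_{K_\mu}\in C(K_\mu,\sa{\A})$; by the reduction, $\KantorovichDist{\Lip}$ is exactly the uniform metric on these restrictions. The family $\{\alpha|_{K_\mu}:\alpha\in S\}$ is equicontinuous---indeed uniformly $1$-Lipschitz---because $\ast$-automorphisms are isometric. It remains to see that for each fixed $a\in K_\mu$ the orbit $\{\alpha(a):\alpha\in S\}$ is relatively compact in $\sa{\A}$: one has $\Lip(\alpha(a))\leq C\Lip(a)\leq C$ and $\norm{\alpha(a)}{\A}=\norm{a}{\A}\leq R$, so the decomposition $\alpha(a)=\bigl(\alpha(a)-\mu(\alpha(a))\unit\bigr)+\mu(\alpha(a))\unit$ places $\alpha(a)$ in the sum of the norm-compact set $\{b\in\dom{\Lip}:\Lip(b)\leq C,\ \mu(b)=0\}$ (a rescaling of $K_\mu$, using that $\Lip$ vanishes on constants) and the compact set $[-R,R]\unit$, a sum which is itself norm-compact. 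Arzel\`a--Ascoli then gives that $S$ is totally bounded for $\KantorovichDist{\Lip}$. The main obstacle throughout is the correct packaging and finiteness of the defining supremum: one must first pass to $K_\mu$ via unitality before the compactness of the Lipschitz ball can be used, and in the last part one must separately control the scalar component $\mu(\alpha(a))\unit$, which the Lipschitz seminorm does not see.
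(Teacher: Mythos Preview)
The paper does not give its own proof of this theorem; it is quoted from \cite[Theorem 5.2]{Latremoliere16b} without argument, so there is nothing in the paper to compare against. Your proof is correct and follows the natural route: the reduction of the defining supremum to the compact set $K_\mu$ via unitality is exactly the right first move (and is used elsewhere in the paper, e.g.\ in the proof of Theorem \ref{compact-Iso-thm}), the metric and topology claims then become routine, and the Arzel\`a--Ascoli argument for total boundedness---with the decomposition $\alpha(a)=(\alpha(a)-\mu(\alpha(a))\unit)+\mu(\alpha(a))\unit$ to control the orbit of each $a\in K_\mu$---is the expected mechanism.
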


The distance in Definition (\ref{Kantorovich-morphism-def}) is induced by a length function on the group of automorphisms, and the inverse operation of automorphisms is bi-Lipschitz on the set of bi-Lipschitz morphisms, for any $K \geq 1$.

\begin{lemma}\label{Kantorovich-Length-lemma}
  Let $(\A,\Lip)$ be a {\qcms}. Define the Monge-Kantorovich length $\KantorovichLength{\Lip}  $ on $\Aut{\A}$ by, for all $\alpha \in  \Aut{\A}$:
  \begin{equation}\label{eq:def-of-mkell}
    \KantorovichLength{\Lip} (\alpha) \coloneqq\sup\left\{ \norm{\alpha(a)-a}{\A} : a\in\dom{\Lip},\Lip(a)\leq 1\right\}.
  \end{equation}
  Then 
  \begin{enumerate}
 \item  $\KantorovichLength{\Lip}$ is a length function on $\Aut{\A}$ such that, for all $\alpha,\beta\in \mathrm{Aut}(\A)$:
  \begin{equation*}
      \KantorovichLength{\Lip}(\beta^{-1}\circ\alpha) =\KantorovichDist{\Lip}(\alpha,\beta) \text.
  \end{equation*}
 \item  For all $\alpha,\beta\in \mathrm{Aut}(\A)$, if $\alpha$ is $K$-bi-Lipschitz for some $K\geq 1$, then we have:
  \begin{equation*}
    \frac{1}{K} \KantorovichDist{\Lip}(\alpha,\beta)\leq \KantorovichDist{\Lip}(\alpha^{-1},\beta^{-1}) \leq K \KantorovichDist{\Lip}(\alpha,\beta) \text.
  \end{equation*}
 \item  In particular, if $\beta \in \Aut{\A}$ and $\alpha$ is a full quantum isometry, we have \begin{equation*}
 \KantorovichLength{\Lip}(\alpha^{-1} \circ \beta \circ \alpha) = \KantorovichLength{\Lip}(\beta)\text.
 \end{equation*}
 Consequently,
  \begin{equation*}
    \KantorovichDist{\Lip}(\alpha^{-1},\beta^{-1}) = \KantorovichDist{\Lip}(\alpha,\beta) \text.
  \end{equation*}
  \end{enumerate}
\end{lemma}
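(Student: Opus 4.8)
The plan is to verify the three items in order, treating each as a short computation built on the definitions of $\KantorovichLength{\Lip}$ and $\KantorovichDist{\Lip}$ together with the characterization of bi-Lipschitz and full quantum isometries already recorded in the excerpt. For item (1), I would first check the length-function axioms: $\KantorovichLength{\Lip}(\mathrm{id}) = 0$ is immediate; symmetry under inversion and the triangle inequality $\KantorovichLength{\Lip}(\alpha\circ\beta)\leq\KantorovichLength{\Lip}(\alpha)+\KantorovichLength{\Lip}(\beta)$ follow by inserting $\pm\beta(a)$ and using the triangle inequality of $\norm{\cdot}{\A}$, noting that $\norm{\alpha(\beta(a)) - \beta(a)}{\A}$ is controlled by $\KantorovichLength{\Lip}(\alpha)$ only after one knows $\beta(a)$ lies in the unit ball of $\Lip$ scaled appropriately --- so the triangle inequality really needs $\beta$ to be a full quantum isometry, or one states it with a Lipschitz constant; I expect the clean statement here is only the displayed identity $\KantorovichLength{\Lip}(\beta^{-1}\circ\alpha) = \KantorovichDist{\Lip}(\alpha,\beta)$, which is proved by substituting $b = \beta^{-1}(\cdot)$ and using that $\beta$ preserves $\norm{\cdot}{\A}$: explicitly, $\norm{\beta^{-1}(\alpha(a)) - a}{\A} = \norm{\alpha(a) - \beta(a)}{\A}$ since $\beta$ is isometric for the C*-norm, and $\{a : \Lip(a)\leq 1\}$ is the same set on both sides, so the two suprema coincide.

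For item (2), let $\alpha$ be $K$-bi-Lipschitz. The idea is to rewrite $\KantorovichDist{\Lip}(\alpha^{-1},\beta^{-1})$ as a supremum over the $\Lip$-unit ball and change variables. Write $\KantorovichDist{\Lip}(\alpha^{-1},\beta^{-1}) = \sup\{\norm{\alpha^{-1}(a) - \beta^{-1}(a)}{\A} : \Lip(a)\leq 1\}$. Applying the C*-norm-preserving map $\beta$ gives $\norm{\alpha^{-1}(a) - \beta^{-1}(a)}{\A} = \norm{\beta(\alpha^{-1}(a)) - a}{\A}$; then applying $\beta\circ\alpha^{-1}$... — more directly, set $c = \alpha^{-1}(a)$, so $a = \alpha(c)$ and $\norm{\alpha^{-1}(a) - \beta^{-1}(a)}{\A} = \norm{c - \beta^{-1}(\alpha(c))}{\A} = \norm{\beta(c) - \alpha(c)}{\A}$ (using $\beta$ isometric). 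As $a$ ranges over $\{\Lip(a)\leq 1\}$, the element $c = \alpha^{-1}(a)$ ranges over $\alpha^{-1}(\{\Lip\leq 1\})$, which by the $K$-bi-Lipschitz bounds $\frac1K\Lip_\A\leq\Lip_\B\circ\alpha\leq K\Lip_\A$ satisfies $\{c : \Lip(c)\leq \tfrac1K\} \subseteq \alpha^{-1}(\{\Lip\leq 1\}) \subseteq \{c : \Lip(c)\leq K\}$. Since $\KantorovichDist{\Lip}(\alpha,\beta) = \sup\{\norm{\beta(c)-\alpha(c)}{\A} : \Lip(c)\leq 1\}$ scales linearly with the radius of the ball, these two inclusions yield exactly $\frac1K\KantorovichDist{\Lip}(\alpha,\beta)\leq\KantorovichDist{\Lip}(\alpha^{-1},\beta^{-1})\leq K\KantorovichDist{\Lip}(\alpha,\beta)$.

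For item (3), if $\alpha$ is a full quantum isometry then $\Lip_\A\circ\alpha = \Lip_\A$ on $\dom{\Lip}$ and $\alpha(\dom{\Lip}) = \dom{\Lip}$, so $\alpha$ maps $\{\Lip\leq 1\}$ bijectively onto itself; combined with $\alpha$ being C*-isometric, $\KantorovichLength{\Lip}(\alpha^{-1}\circ\beta\circ\alpha) = \sup\{\norm{\alpha^{-1}(\beta(\alpha(a))) - a}{\A} : \Lip(a)\leq 1\} = \sup\{\norm{\beta(\alpha(a)) - \alpha(a)}{\A} : \Lip(a)\leq 1\} = \sup\{\norm{\beta(c) - c}{\A} : \Lip(c)\leq 1\} = \KantorovichLength{\Lip}(\beta)$, the second-to-last step being the substitution $c = \alpha(a)$. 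The final equality $\KantorovichDist{\Lip}(\alpha^{-1},\beta^{-1}) = \KantorovichDist{\Lip}(\alpha,\beta)$ is then immediate from item (2) with $K = 1$ (a full quantum isometry is $1$-bi-Lipschitz), or alternatively from item (1) and conjugation-invariance of the length. The only mild subtlety throughout --- the main thing to be careful about rather than a genuine obstacle --- is tracking that the change of variables in each supremum maps the relevant $\Lip$-ball correctly, which is where the bi-Lipschitz hypothesis (or the isometry hypothesis) is used; the C*-norm invariance $\norm{\gamma(x)}{\A} = \norm{x}{\A}$ for $\ast$-automorphisms $\gamma$ is used repeatedly and without comment.
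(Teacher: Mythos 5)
Your items (2) and (3) are correct and follow essentially the paper's route (change of variables in the supremum, the ball inclusions coming from the bi-Lipschitz bounds, and linear scaling of the supremum with the radius; the final equality in (3) via $K=1$ is a legitimate shortcut). The problem is item (1): you assert that the subadditivity $\KantorovichLength{\Lip}(\alpha\circ\beta)\leq\KantorovichLength{\Lip}(\alpha)+\KantorovichLength{\Lip}(\beta)$ ``really needs $\beta$ to be a full quantum isometry'' and retreat to proving only the displayed identity. That leaves the lemma's actual claim --- that $\KantorovichLength{\Lip}$ is a length function on all of $\Aut{\A}$ --- unproved, and the claim is in fact true without any Lipschitz hypothesis. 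Your difficulty comes from inserting the wrong intermediate term: you split off $\pm\beta(a)$, which forces you to estimate $\norm{\alpha(\beta(a))-\beta(a)}{\A}$ and hence to control $\Lip(\beta(a))$. Instead insert $\pm\alpha(a)$:
\begin{equation*}
  \norm{\alpha(\beta(a))-a}{\A}\leq\norm{\alpha(\beta(a))-\alpha(a)}{\A}+\norm{\alpha(a)-a}{\A}=\norm{\beta(a)-a}{\A}+\norm{\alpha(a)-a}{\A}\text,
\end{equation*}
where the equality uses only that $\alpha$ is a C*-norm isometry. Both terms on the right are now evaluated at the same $a$ with $\Lip(a)\leq 1$, so taking the supremum gives subadditivity for arbitrary $\ast$-automorphisms. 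You should also record the definiteness step (if $\KantorovichLength{\Lip}(\alpha)=0$ then $\alpha$ fixes the $\Lip$-unit ball, which is total in $\sa{\A}$, so $\alpha=\mathrm{id}$), which the paper includes and your sketch omits. With these two additions your argument matches the paper's proof.
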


\begin{proof}
	We now provide a proof for each item in our lemma.
 \begin{enumerate}
 	\item  Since $\ast$-automorphisms of $\A$ are isometries for the norm of $\A$, we observe that for all $a\in\A$, we have:
  \begin{equation*}
    \norm{\alpha(a)-\beta(a)}{\A} = \norm{\beta(\beta^{-1}\circ\alpha(a)-a)}{\A} = \norm{\beta^{-1}\circ\alpha(a)-a}{\A} \text,
  \end{equation*}
  so $\KantorovichDist{\Lip}(\alpha,\beta) = \KantorovichLength{\Lip}(\beta^{-1}\circ\alpha)$.
In particular, choosing $\alpha$ to be the identity, we see that
  \begin{equation*}
    \norm{\beta(a)-a}{\A} = \norm{a-\beta^{-1}(a)}{\A} \text,
  \end{equation*}
  i.e., $\KantorovichLength{\Lip}(\beta^{-1}) = \KantorovichLength{\Lip}(\beta)$.
  
 If $\KantorovichLength{\Lip}(\alpha) = 0$ then $\alpha(a) = a$ for all $a\in\dom{\Lip}$ with $\Lip(a)\leq 1$; therefore $\alpha(a) = a$ for all $a\in\A$ as $\alpha$ is continuous and linear, and since $\{a\in\dom{\Lip}:\Lip(a)\leq 1\}$ is total in $\sa{\A}$.
Moreover, for all $\alpha,\beta\in\Aut{\A}$, we have
  \begin{equation*}
    \norm{\alpha\circ\beta(a)-a}{\A}\leq\norm{\alpha(\beta(a))-\alpha(a)}{\A} + \norm{\alpha(a)-a}{\A} = \norm{\beta(a)-a}{\A} + \norm{\alpha(a)-a}{\A}\text,
  \end{equation*}
  which implies that $\KantorovichLength{\Lip}(\alpha\circ\beta)\leq\KantorovichLength{\Lip}(\alpha)+\KantorovichLength{\Lip}(\beta)$.

  Since $\KantorovichLength{\Lip}$ is zero at the identity, we have established that $\KantorovichLength{\Lip}$ is indeed a length function on $\Aut{\A}$.

 \item  Let $\alpha\in \Aut{\A}$ be $K$-bi-Lipschitz for some $K > 0$; this implies by Definition (\ref{Lipschitz-morphism-def}) that  $\alpha^{-1}$ is also $K$-bi-Lipschitz.  Moreover,
  \begin{equation*}
    \left\{ b \in \dom{\Lip} : \Lip(b)\leq 1 \right\} \subseteq \left\{ \alpha(a) : a \in \dom{\Lip}, \Lip(a) \leq K \right\} 
  \end{equation*}
  and
   \begin{equation*}
  \left\{ a \in \dom{\Lip} : \Lip(a)\leq 1 \right\} \subseteq \left\{ \alpha^{-1}(b) : b \in \dom{\Lip}, \Lip(b) \leq K \right\} \text.
  \end{equation*}

  Thus,
  \begin{align*}
    \KantorovichDist{\Lip}(\alpha^{-1},\beta^{-1})
    &=\sup\left\{ |\alpha^{-1}(b)-\beta^{-1}(b)| : b\in\dom{\Lip},\Lip(b) \leq 1 \right\} \\
    &\leq\sup\left\{ |\alpha^{-1}(\alpha(a))-\beta^{-1}(\alpha(a))|: a\in\dom{\Lip}, \Lip(a)\leq K \right\}\\
    &=\sup\left\{ |a-\beta^{-1}(\alpha(a))|: a\in\dom{\Lip}, \Lip(a)\leq K \right\}\\
    &=K\, \KantorovichLength{\Lip}(\beta^{-1}\circ\alpha) \\
    &=K\, \KantorovichDist{\Lip}(\alpha,\beta) \text.
  \end{align*}
  Thus, by symmetry, we obtain:
  \begin{equation*}
    \frac{1}{K}\KantorovichDist{\Lip}(\alpha,\beta) \leq \KantorovichDist{\Lip}(\alpha^{-1},\beta^{-1}) \leq K\, \KantorovichDist{\Lip}(\alpha,\beta) \text.
  \end{equation*}

\item Let  $\alpha$ be a full quantum isometry of $(\A,\Lip)$; then:
  \begin{equation*}
    \left\{a\in\dom{\Lip} : \Lip(a)\leq 1 \right\} = \alpha\left(\left\{ a \in \dom{\Lip} : \Lip(a)\leq 1\right\} \right)\text.
  \end{equation*}
  
  Therefore,
  \begin{align*}
    \KantorovichLength{\Lip}(\alpha^{-1}\circ\beta\circ\alpha)
    &=\sup\left\{ \norm{\alpha^{-1}\circ\beta\circ\alpha(a)-a}{\A} : \Lip(a)\leq 1 \right\} \\
    &=\sup\left\{ \norm{\alpha^{-1}(\beta(\alpha(a))-\alpha(a))}{\A} : \Lip(a)\leq 1 \right\} \\
    &=\sup\left\{ \norm{\beta(\alpha(a))-\alpha(a)}{\A} : \Lip(a)\leq 1 \right\} \\
    &=\sup\left\{ \norm{\beta(a)-a}{\A} : \Lip(a)\leq 1 \right\} \\
    &=\KantorovichLength{\Lip}(\beta) \text.
  \end{align*}

  In particular,
  \begin{align*}
    \KantorovichDist{\Lip}(\alpha^{-1},\beta^{-1})
    &= \KantorovichLength{\Lip}(\beta\circ\alpha^{-1}) \\
    &= \KantorovichLength{\Lip}(\alpha^{-1}\circ(\beta\circ\alpha^{-1})\circ\alpha) \\
    &= \KantorovichLength{\Lip}(\alpha^{-1}\circ\beta) \\
    &= \KantorovichDist{\Lip}(\beta,\alpha) \text.
  \end{align*}
  \end{enumerate}

This concludes our proof.
\end{proof}

\medskip

If $(\A,\Lip)$ is a {\qcms},  \cite[Theorem 4.1]{Rieffel99} demonstrates, using the fact that  $\Lip$ is lower semicontinuous,  that a $\ast$-automorphism $\alpha\in\Aut{\A}$ of $\A$ is $K$-bi-Lipschitz  for $(\A,\Lip)$ if, and only if, $\alpha^\ast : \varphi \rightarrow \varphi\circ\alpha$ is a $K$-bi-Lipschitz map on $(\StateSpace(\A),\Kantorovich{\Lip})$, i.e.,
\begin{equation*}
  \forall\varphi,\psi \in \StateSpace(\A) \quad \frac{1}{K}\Kantorovich{\Lip}(\varphi,\psi) \leq \Kantorovich{\Lip}(\varphi\circ\alpha,\psi\circ\alpha) \leq K \Kantorovich{\Lip}(\varphi,\psi) \text.
\end{equation*}
From this it is  easy to see that, in particular, the composition of full quantum isometries of $(\A,\Lip)$ is again a full quantum isometry of $(\A,\Lip)$.

The first group of interest in this work is the \emph{group of isometries of a {\qcms}}, which we now define.

\begin{definition}\label{Large-iso-def}
  The group $\LargeIso(\A,\Lip)$ of {\it isometries of a {\qcms}} $(\A,\Lip)$ is the group, under composition, of full quantum isometries from $(\A,\Lip)$ to itself.
\end{definition}
By \cite{Rieffel00}, for a {\qcms} $(\A,\Lip)$, since $\Lip$ is assumed lower semi-continuous over $\sa{\A}$, we note that
\begin{equation*}
  \LargeIso(\A,\Lip) = \left\{ \alpha \in \Aut{\A} : \forall \varphi,\psi \in \StateSpace(\A) \quad \Kantorovich{\Lip}(\varphi\circ\alpha,\psi\circ\alpha) = \Kantorovich{\Lip}(\varphi,\psi) \right\} \text.
\end{equation*}

Lemma (\ref{Kantorovich-Length-lemma}) proves that the restriction of $\KantorovichDist{\Lip}$ to $\LargeIso(\A,\Lip)$ is actually both left invariant (as it is induced by a length) and right invariant (as the length is invariant by conjugation).

\medskip

We now prove that the group of isometries of a {\qcms} is in fact compact for the Monge-Kantorovich metric of Definition (\ref{Kantorovich-morphism-def}). This was established in the special case of quantum metrics arising from spectral triples for twisted discrete finitely generated group C*-algebras in \cite{LoWu}. Our isometry groups are not defined in the same manner as Rieffel's isometry groups, since he works with order unit spaces instead of C$^\ast$-algebras, nonetheless, our proof is related to \cite[Proposition 6.6]{Rieffel00}. In fact, we obtain this result as a corollary of the following theorem about the compactness of sets of bi-Lipschitz automorphisms.

\begin{theorem}\label{compact-Iso-thm}
  If $(\A,\Lip)$ is a {\qcms}, and if $K \geq 1$, then the set
  \begin{equation*}
    \Aut{\A,\Lip,K} \coloneqq \left\{ \alpha \in \Aut{\A} : \text{$\alpha$ is $K$-bi-Lipschitz} \right\}
  \end{equation*}
  is compact for $\KantorovichDist{\Lip}$.
\end{theorem}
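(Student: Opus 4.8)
The plan is to show $\Aut{\A,\Lip,K}$ is both totally bounded and closed in the complete(?) metric space $(\Aut{\A},\KantorovichDist{\Lip})$, or more directly to extract from any sequence in $\Aut{\A,\Lip,K}$ a convergent subsequence whose limit is again a $K$-bi-Lipschitz automorphism. Since $\KantorovichDist{\Lip}$ metrizes the topology of pointwise (strong) convergence by Theorem \eqref{KantorovichDist-thm}, and since $\Aut{\A,\Lip,K}\subseteq\{\alpha\in\Aut{\A}:\Lip\circ\alpha\leq K\Lip\}$, total boundedness for $\KantorovichDist{\Lip}$ is immediate from the last assertion of Theorem \eqref{KantorovichDist-thm}. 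So the real content is \emph{closedness}: I must show that a strong limit of $K$-bi-Lipschitz $\ast$-automorphisms is again a $\ast$-automorphism and is $K$-bi-Lipschitz.

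First I would address why the limit is a $\ast$-automorphism. Given a sequence $(\alpha_n)$ in $\Aut{\A,\Lip,K}$, by total boundedness pass to a $\KantorovichDist{\Lip}$-Cauchy subsequence; by Theorem \eqref{KantorovichDist-thm} this converges pointwise on a total set, hence (using that all $\alpha_n$ are norm-isometric and linear) pointwise on all of $\A$ to a map $\alpha$, which is then a unital $\ast$-homomorphism and a norm isometry. To get surjectivity/invertibility, apply the same extraction to the inverses: since each $\alpha_n^{-1}$ is also $K$-bi-Lipschitz (noted in the excerpt just after the definition), and since $\KantorovichDist{\Lip}(\alpha_n^{-1},\alpha_m^{-1})\leq K\,\KantorovichDist{\Lip}(\alpha_n,\alpha_m)$ by Lemma \eqref{Kantorovich-Length-lemma}(2), the sequence $(\alpha_n^{-1})$ is also Cauchy, so it converges pointwise to some $\beta$. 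Then for $a\in\A$, $\beta(\alpha(a))=\lim_m\alpha_m^{-1}(\lim_n\alpha_n(a))$; one checks $\beta\circ\alpha=\mathrm{id}=\alpha\circ\beta$ by a standard $3\varepsilon$ argument using norm-isometry of the $\alpha_m^{-1}$ to interchange limits. Hence $\alpha\in\Aut{\A}$.

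Next, the bi-Lipschitz bound for the limit. This is where lower semicontinuity of $\Lip$ (Condition (4) in Definition \eqref{qcms-def}, equivalently that $\{\Lip\leq 1\}$ is closed in $\sa{\A}$) does the work. Fix $a\in\dom{\Lip}$. For the upper bound $\Lip(\alpha(a))\leq K\Lip(a)$: since $\alpha_n(a)\to\alpha(a)$ in norm and $\Lip(\alpha_n(a))\leq K\Lip(a)$ for all $n$, lower semicontinuity of $\Lip$ gives $\Lip(\alpha(a))\leq\liminf_n\Lip(\alpha_n(a))\leq K\Lip(a)$; in particular $\alpha(\dom{\Lip})\subseteq\dom{\Lip}$, so $\alpha$ is a Lipschitz morphism. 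Running the identical argument with $\beta=\alpha^{-1}$ in place of $\alpha$ gives $\Lip(\alpha^{-1}(b))\leq K\Lip(b)$ for all $b\in\dom{\Lip}$, i.e. $\alpha^{-1}(\dom{\Lip})\subseteq\dom{\Lip}$, whence $\alpha(\dom{\Lip})=\dom{\Lip}$. Combining, for $a\in\dom{\Lip}$ we get $\Lip(a)=\Lip(\alpha^{-1}(\alpha(a)))\leq K\Lip(\alpha(a))$, i.e. $\frac1K\Lip(a)\leq\Lip(\alpha(a))\leq K\Lip(a)$. Thus $\alpha$ is $K$-bi-Lipschitz, so $\alpha\in\Aut{\A,\Lip,K}$, proving the set is closed; together with total boundedness (and noting the ambient space, being a closed subgroup of the isometry group of $\A$ in the topology of pointwise convergence, is complete — or simply that a totally bounded closed subset of the closure suffices) we conclude $\Aut{\A,\Lip,K}$ is compact.

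The main obstacle I anticipate is the bookkeeping around \emph{simultaneously} controlling $\alpha_n$ and $\alpha_n^{-1}$: one must be careful that the subsequence chosen to make $(\alpha_n)$ Cauchy also works for $(\alpha_n^{-1})$ (automatic via Lemma \eqref{Kantorovich-Length-lemma}(2)), and that the pointwise limits $\alpha,\beta$ are genuinely mutually inverse rather than merely one-sided inverses — this needs the uniform norm-isometry of all maps involved to justify exchanging the two limits, and is the one place a naive argument could slip. Everything else is a routine application of Theorem \eqref{KantorovichDist-thm} and lower semicontinuity of $\Lip$.
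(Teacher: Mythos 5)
Your proposal is correct and follows essentially the same route as the paper's proof: total boundedness from Theorem \eqref{KantorovichDist-thm}, pointwise convergence of the Cauchy sequence to a $\ast$-morphism, the inverse sequence being Cauchy via Lemma \eqref{Kantorovich-Length-lemma}, a $3\varepsilon$ argument to show the two limits are mutually inverse, and lower semicontinuity of $\Lip$ for the bi-Lipschitz bounds and the equality $\alpha(\dom{\Lip})=\dom{\Lip}$. No substantive difference from the paper's argument.
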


\begin{proof}
  By Theorem (\ref{KantorovichDist-thm}), the space $\Aut{\A,\Lip,K}$ is totally bounded for $\KantorovichDist{\Lip}$. It is then sufficient to prove that $\Aut{\A,\Lip,K}$ is complete for $\KantorovichDist{\Lip}$. Thus, let $(\alpha_n)_{n\in\N}$ be a Cauchy sequence in $\Aut{\A,\Lip,K}$ for $\KantorovichDist{\Lip}$. Set
  \begin{equation*}
    B \coloneqq \left\{ a \in \dom{\Lip} : \Lip(a) \leq 1,\norm{a}{\A} \leq \qdiam{\A}{\Lip} \right\}\text.
  \end{equation*}

  If $\mu \in \StateSpace(\A)$ and $a\in\dom{\Lip}$ with $\Lip(a)\leq 1$, then $\norm{a-\mu(a)}{\A} \leq \Lip(a)\qdiam{\A}{\Lip}$ by \cite{Rieffel98a}. Therefore, for any $\alpha,\beta\in\Aut{\A}$, we observe that
  \begin{equation*}
    \norm{\alpha(a)-\beta(a)}{\A} = \norm{\alpha(a-\mu(a))-\beta(a-\mu(a))}{\A} 
  \end{equation*}
 implies
  \begin{equation*}
    \KantorovichDist{\Lip}(\alpha,\beta) = \sup\left\{\norm{\alpha(a)-\beta(a)}{\A} : a\in B \right\} \text.
  \end{equation*}

  For any $a\in B$, the sequence $(\alpha_n(a))_{n\in\N}$ is Cauchy for $\norm{\cdot}{\A_\infty}$ since $(\alpha_n)_{n\in\N}$ is Cauchy for $\KantorovichDist{\Lip}$. Since $\A$ is complete, the sequence $(\alpha_n(a))_{n\in\N}$ converges.

  Moreover, if $a\in\dom{\Lip}$ with $\Lip(a) \leq 1$, then $(\alpha_n(a-\mu(a)))_{n\in\N}$ converges, since $a-\mu(a) \in B$; as $\alpha(\mu(a))=\mu(a)$, we conclude that $(\alpha_n(a))_{n\in\N}$ converges, as well.

  For $a\in\dom{\Lip}$, the sequence $\left(\alpha_n\left(\frac{a}{\Lip(a)+1}\right)\right)_{n\in\N}$ converges, since $\Lip\left(\frac{a}{\Lip(a)+1}\right)\leq 1$. Therefore, the sequence $(\alpha_n(a))_{n\in\N}$ converges as well, since $\alpha_n$ is linear for all $n\in\N$. Thus, the sequence $(\alpha_n)_{n\in\N}$ of $\ast$-automorphisms converges pointwise on $\dom{\Lip}$; let $\alpha$ be its pointwise limit.
  
  Define $A \coloneqq\{ a+ib : a,b \in \dom{\Lip} \}$. Since $\dom{\Lip}$ is dense in $\sa{\A}$, it can be  easily checked that $A$ is dense in $\A$. Since $\dom{\Lip}$ is a Jordan-Lie algebra, we also easily check that $A$ is a $\ast$-subalgebra of $\A$. We extend $\alpha$ to $A$ simply by setting $\alpha(a+ib) \coloneqq \alpha(a) + i\alpha(b)$ --- noting that $\alpha$ is indeed well-defined this way.

  Furthermore, if $a+ib \in A$, then $(\alpha_n(a))_{n\in\N}$ and $(\alpha_n(b))_{n\in\N}$ converge to, respectively, to $\alpha(a)$ and $\alpha(b)$; thus
  \begin{equation*}
    \alpha_n(a+ib) = \alpha_n(a) + i\alpha_n(b) \xrightarrow{n\rightarrow\infty} \alpha(a) + i\alpha(b) = \alpha(a+ib) \text.
  \end{equation*}
  Therefore $\alpha$ is the pointwise limit of $(\alpha_n)_{n\in\N}$ on $A$; as such, $\alpha$ is a $\ast$-morphism on $A$.
Moreover, since $\alpha_n$ is a $\ast$-automorphism for every $n\in\N$, it is an isometry, so we have, for all $a\in \dom{\Lip}$,
  \begin{equation*}
    \norm{\alpha(a)}{\A} = \lim_{n\rightarrow\infty} \norm{\alpha_n(a)}{\A} = \norm{a}{\A} \text.
  \end{equation*}
  Thus $\alpha$ is uniformly continuous over $A$, and so it admits a unique uniformly continuous extension of $\alpha$ to $\A$; we still denote this extension by $\alpha$. Of course, $\alpha$ is again a $\ast$-morphism of $\A$.

  Now, the sequence $(\alpha_n)_{n\in\N}$ is a Cauchy sequence for the supremum distance over $B$, namely $\Kantorovich{\Lip}$, and it converges pointwise to $\alpha$ on $B$, so a standard argument shows that $\alpha$ is the uniform limit of $(\alpha_n)_{n\in\N}$ over $B$, i.e.,
  \begin{equation*}
    \lim_{n\rightarrow\infty} \KantorovichDist{\Lip}(\alpha_n,\alpha) = 0 \text.
  \end{equation*}

  By Lemma (\ref{Kantorovich-Length-lemma}), since $\alpha_n\in\Aut{\A,\Lip,K}$ for all $n\in\N$, the sequence $(\alpha_n^{-1})_{n\in\N}$ is Cauchy for $\KantorovichDist{\Lip}$ as well. Following the same reasoning as above, let $\beta$ be the pointwise limit of $(\alpha_n^{-1})_{n\in\N}$, and fix  $\varepsilon > 0$. Since $(\alpha_n^{-1})_{n\in\N}$ converges uniformly to $\beta$ on $B$, we conclude that there exists $N\in\N$ such that, if $n\geq N$, then $\KantorovichDist{\Lip}(\alpha_n^{-1},\beta)<\frac{\varepsilon}{2 K}$. Similarly, there exists $N' \in \N$ such that if $n\geq N'$ then $\KantorovichDist{\Lip}(\alpha_n,\alpha) < \frac{\varepsilon}{2}$. Thus, if $a\in B$ and $n\geq\max\{N,N'\}$, we have:
  \begin{align*}
    \norm{a-\beta(\alpha(a))}{\A}
    &\leq \norm{a-\beta(\alpha_n(a))}{\A} + \norm{\beta(\alpha_n(a)-\alpha(a))}{\A} \\
    &\leq \norm{a-\alpha_n^{-1}(\alpha_n(a))}{\A} + \norm{\alpha_n^{-1}(\alpha_n(a))-\beta(\alpha_n(a))}{\A} + \norm{\beta(\alpha_n(a)-\alpha(a))}{\A} \\
    &\leq 0 + K\norm{\alpha_n^{-1}(\underbracket[1pt]{K^{-1}\alpha_n(a)}_{\Lip(K^{-1}\alpha_n(a))\leq 1})-\beta(K^{-1}\alpha_n(a))}{\A} + \underbracket[1pt]{\norm{\alpha_n(a)-\alpha(a)}{\A}}_{\beta\text{i is  $\ast$-morphism}} \\
    &\leq 0 + K\, \KantorovichDist{\Lip}(\alpha_n^{-1},\beta) + \KantorovichDist{\Lip}(\alpha_n,\alpha) \\
    &< 0 + \frac{\varepsilon}{2} + \frac{\varepsilon}{2} = \varepsilon \text.
  \end{align*}
  So $a = \beta(\alpha(a))$ as $\varepsilon > 0$ was arbitrary. By symmetry, $\beta(\alpha(a)) = a$ as well. So $\alpha$ is indeed a $\ast$-automorphism of $\A$.
Fix  $a\in \dom{\Lip}$. Since $\Lip$ is lower semicontinuous, $\Lip(\alpha(a)) \leq \liminf_{n\rightarrow\infty}\Lip(\alpha_n(a)) \leq K\, \Lip(a)$, and the same reasoning applies to $\alpha^{-1}$. So, for all $a\in\dom{\Lip}$:
  \begin{equation*}
    \frac{1}{K} \Lip(a) = \frac{1}{K} \Lip(\alpha^{-1}(\alpha(a))) \leq \Lip(\alpha(a)) \leq K \Lip(a) \text.
  \end{equation*}
  Now, since both $\alpha$ and $\alpha^{-1}$ map $\dom{\Lip}$ to itself, we have
  \begin{equation*}
    \dom{\Lip} = \alpha(\alpha^{-1}(\dom{\Lip})) \subseteq \alpha(\dom{\Lip}) \subseteq \dom{\Lip}
  \end{equation*}
  so $\alpha(\dom{\Lip}) = \dom{\Lip}$.
  
  Summarizing,  $\Aut{\A,\Lip,K}$ is complete and totally bounded, so it is compact for $\KantorovichDist{\Lip}$.
\end{proof}

\begin{corollary}\label{cor:ISO-compact}
  If $(\A,\Lip)$ is a {\qcms}, then $\LargeIso(\A,\Lip)$ is a compact group in $\KantorovichDist{\Lip}$.
\end{corollary}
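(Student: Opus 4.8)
The plan is to recognize that $\LargeIso(\A,\Lip)$ is nothing other than the set $\Aut{\A,\Lip,1}$ of $1$-bi-Lipschitz $\ast$-automorphisms, and then quote Theorem \eqref{compact-Iso-thm}. Indeed, as recalled (after \cite[Corollary 2.3]{Latremoliere16b}) just before the definition of full quantum isometries, a $\ast$-automorphism $\alpha$ of $\A$ is a full quantum isometry if, and only if, $\alpha(\dom{\Lip}) = \dom{\Lip}$ and $\Lip\circ\alpha = \Lip$ on $\dom{\Lip}$; this is verbatim the condition that $\alpha$ be $1$-bi-Lipschitz. Hence $\LargeIso(\A,\Lip) = \Aut{\A,\Lip,1}$, and applying Theorem \eqref{compact-Iso-thm} with $K = 1$ shows immediately that $\LargeIso(\A,\Lip)$ is compact for $\KantorovichDist{\Lip}$.

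It then remains only to confirm that $\LargeIso(\A,\Lip)$ is genuinely a group. The identity map is trivially a full quantum isometry; the inverse of a full quantum isometry is a full quantum isometry directly from Definition \eqref{Lipschitz-morphism-def}; and the composition of two full quantum isometries is again one, as observed in the text following \cite[Proposition 3.7]{Rieffel00}. Thus $\LargeIso(\A,\Lip)$ is a subgroup of $\Aut{\A}$, and by Lemma \eqref{Kantorovich-Length-lemma} the restriction of $\KantorovichDist{\Lip}$ to it is both left- and right-invariant, so it is in fact a compact topological group.

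There is no substantial obstacle at this stage: all the work has been front-loaded into Theorem \eqref{compact-Iso-thm}, whose proof used total boundedness from Theorem \eqref{KantorovichDist-thm} together with a completeness argument exploiting the lower semicontinuity of $\Lip$ and the invariance properties from Lemma \eqref{Kantorovich-Length-lemma}. The only point worth a moment's care in the present corollary is the identification $\LargeIso(\A,\Lip) = \Aut{\A,\Lip,1}$, which relies on the lower-semicontinuity hypothesis (Condition (4) of Definition \eqref{qcms-def}) that underlies the cited characterization of full quantum isometries.
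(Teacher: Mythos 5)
Your proposal is correct and follows essentially the same route as the paper: both identify $\LargeIso(\A,\Lip)$ with $\Aut{\A,\Lip,1}$, invoke Theorem \eqref{compact-Iso-thm} with $K=1$ for compactness, and appeal to Lemma \eqref{Kantorovich-Length-lemma} for the topological-group structure (continuity of composition via the length function and the fact that inversion is an isometry). There is nothing to add.
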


\begin{proof}
  We apply Theorem (\ref{compact-Iso-thm}) to $\LargeIso(\A,\Lip) = \Aut{\A,\Lip,1}$, which implies that $\LargeIso(\A,\Lip)$ is a compact space. By Lemma (\ref{Kantorovich-Length-lemma}), since $\KantorovichLength{\Lip}$ is a length function on $\LargeIso(\A,\Lip)$, composition is continuous as well. Since $\LargeIso(\A,\Lip)$ is compact and has continuous multiplication, the inverse map is also continuous: in fact, Lemma (\ref{Kantorovich-Length-lemma}) proves that the inverse map is an isometry.
\end{proof}

We conclude this section with an observation, to illustrate the value of some of our results. In general, for the topology of pointwise convergence on the set of $\ast$-morphisms of any unital C*-algebra $\A$, the set $\Aut{\A}$ is not a closed subset. For instance, let $\A$ be the C*-algebra of all convergent sequences with values in $\C$. For each $n\in\N$, and for each $(x_k)_{k\in\N} \in \A$, we define
\begin{equation*}
  \alpha_n( (x_k)_{k\in\N} ) \coloneqq \left(x_n,x_0,x_1,x_2,\ldots,x_{n-1},x_{n+1},x_{n+2},\ldots\right) \text.
\end{equation*}
It is easy to check that $\alpha_n$ is a $\ast$-automorphism of $\A$ with inverse:
\begin{equation*}
  \alpha_n( (x_k)_{k\in\N} ) \coloneqq \left(x_1,x_2,x_3,x_4,\ldots,x_{n-1},x_{0},x_{n},\ldots\right) \text.
\end{equation*}
Since elements of $\A$ are convergent sequences, we then get that
\begin{equation*}
  \lim_{n\rightarrow\infty} \alpha_n( (x_k)_{k\in\N} ) = \beta( (x_k)_{k\in\N} ) \coloneqq \left(\lim_{k\rightarrow\infty} x_k, x_0, x_1, x_2, \ldots \right) \text.
\end{equation*}
So in particular, the sequence $\left(\frac{1}{n+1}\right)_{n\in\N}$ is not in the range of the limit $\ast$-morphism $\beta$.

On the other hand, $\Aut{\A}$ is a topological group. Indeed, if $(\alpha_n)_{n\in\N}$ in $\Aut{\A}$ converges pointwise to $\alpha$, and if $\alpha$ is a $\ast$-automorphism of $\A$ as well, then for all $a\in\A$, we have
\begin{equation*}
  \norm{\alpha_n^{-1}(a)-\alpha^{-1}(a)}{\A} = \norm{\alpha_n^{-1}(a-\alpha_n(\alpha^{-1}(a)))}{\A} = \norm{a-\alpha_n(\alpha^{-1}(a))}{\A} \xrightarrow{n\rightarrow\infty} 0 \text.
\end{equation*}
Therefore, the inverse map of $\Aut{\A}$ is indeed continuous.

Let us now add quantum metrics in this picture, and work with a {\qcms} $(\A,\Lip)$. We work here with a metric on $\Aut{\A}$, namely $\KantorovichDist{\Lip}$, and thus we can have a somewhat more precise understanding of the situation above. Indeed, the group $\Aut{\A}$ is not complete for this metric --- which metrizes the topology of pointwise convergence. However, we see that if we restrict our attention to sets of $K$-bi-Lipschitz automorphisms, then thanks to Theorem (\ref{compact-Iso-thm}),  the pointwise limit of a sequence of $K$-bi-Lipschitz $\ast$-automorphisms is indeed a $\ast$-automorphism (which is again $K$-bi-Lipschitz). Thus, adding the quantum metric data helps us identifying compact sets of automorphisms, and allows us to construct automorphisms as pointwise limits of other automorphisms.

\subsection{The Isometry Group of a Spectral Triple}\label{sec:isom-st}

For any spectral triple $(\A,\Hilbert,\Dirac)$, we define below the second group of interest in this work. Informally, this group is defined using morphisms in some category of spectral triples, rather than in the category of {\qcms s}. It is thus possible to define our second group without immediate reference to the metric property of the spectral triple. This group was introduced early in the study of noncommutative metric geometry by E. Park in \cite{Efton95}, where it is computed, for instance, for the noncommutative $2$-tori, and for   some spectral triples introduced by Connes on discrete groups in \cite{Connes89}. It is also shown in \cite[Theorem 1.2]{Efton95} that this group does agree with the group of isometries of a Riemannian manifold for a standard spectral triple.

\begin{definition}[{\cite{Efton95}}]\label{small-iso-def}
  The group $\smallIso(\A,\Hilbert,\Dirac)$ of isometries for the spectral triple $(\A,\Hilbert,\Dirac)$ is the following subgroup of the group $\mathrm{Aut}(\A)$ of $\ast$-automorphisms of $\A$:
  \begin{multline*}
    \smallIso(\A,\Hilbert,\Dirac) \coloneqq \\ \left\{ \alpha \in \mathrm{Aut}(\A) : \exists U \text{ unitary on $\Hilbert$ s.t. \ } \AdRep{U} = \alpha, U\dom{\Dirac}=\dom{\Dirac}, [\Dirac,U] = 0 \right\} \text.
  \end{multline*}
\end{definition}

Indeed, $\smallIso(\A,\Hilbert,\Dirac)$ is a subgroup of $\Aut{\A}$. For, fix $\alpha \in \smallIso(\A,\Hilbert,\Dirac)$. By Definition (\ref{small-iso-def}), there exists some unitary $U$ on $\Hilbert$ which commutes with $\Dirac$ such that $\alpha = \AdRep{U}$. Of course, $U^\ast=U^{-1}$ also commutes with $\Dirac^\ast=\Dirac$, and $\alpha^{-1} = \AdRep{U^\ast}$, so $\alpha^{-1} \in \smallIso(\A,\Hilbert,\Dirac)$. Moreover, if $\beta\in \smallIso(\A,\Hilbert,\Dirac)$, then $\beta=\AdRep{V}$ for some unitary $V$ on $\Hilbert$ which also commutes with $\Dirac$, and thus $\beta\circ\alpha^{-1} = \AdRep{VU^\ast}$; as $VU^\ast$ is a unitary commuting with $\Dirac$, we conclude that $\beta\circ\alpha^{-1} \in \smallIso(\A,\Hilbert,\Dirac)$. Since the identity is obviously in $\smallIso(\A,\Hilbert,\Dirac)$, the set $\smallIso(\A,\Hilbert,\Dirac)$ is indeed a subgroup of the group of $\ast$-automorphisms of $\A$.

It is also immediate that if $a \in \A$ with $a\,\dom{\Dirac}\subseteq\dom{\Dirac}$ and $[\Dirac,a]$ is bounded, then we have:
\begin{align}\label{spectral-triple-iso-comm-eq}
  \opnorm{[\Dirac,\alpha(a)]}{}{\Hilbert}
  &= \opnorm{[\Dirac,U a U^\ast]}{}{\Hilbert} \\
  &= \opnorm{U[\Dirac,a]U^\ast}{}{\Hilbert} = \opnorm{[\Dirac,a]}{}{\Hilbert} \text, \nonumber
\end{align}
where $\opnorm{T}{}{\Hilbert}$ is the usual operator norm of any linear operator $T$ on $\Hilbert$.

Of course, Expression (\ref{spectral-triple-iso-comm-eq}) implies that the elements of $\smallIso(\A,\Hilbert,\Dirac)$ induce isometries for the Connes metric of $(\A,\Hilbert,\Dirac)$. It is then natural to work with spectral triples which give rise to {\qcms s}, called \emph{metric spectral triples}, which are exactly the spectral triples for which the Connes metric is actually a distance which induces the weak* topology on the state space of $\A$  \cite[Proposition 1.10]{Latremoliere18g}.

\begin{definition}\label{metric-spectral-triple-def}
  A spectral triple $(\A,\Hilbert,\Dirac)$ is \emph{metric} when $(\A,\Lip_\Dirac)$ is a {\qcms}, where
  \begin{equation*}
    \dom{\Lip_\Dirac} \coloneqq \left\{ a\in\sa{\A} : a\,\dom{\Dirac}\subseteq\dom{\Dirac}, [\Dirac,a]\text{ is bounded } \right\}
  \end{equation*}
  and for all $a \in \dom{\Lip_\Dirac}$ we define 
  \begin{equation*}
    \Lip_\Dirac(a) \coloneqq \opnorm{[\Dirac,a]}{}{\Hilbert} \text.
  \end{equation*}
  In particular, the state space $\StateSpace(\A)$ is a metric space with respect to the Monge-Kantorovich metric $\Kantorovich{\Lip_\Dirac}$ (induced by the seminorm ${\Lip_\Dirac}$), defined by, for all $\varphi, \psi \in \StateSpace(\A)$:
   \begin{equation}\label{def:MK-norm-state-space}
  \Kantorovich{\Lip_\Dirac}(\varphi,\psi) \coloneqq \sup\left\{ \norm{\varphi(a)-\psi(a)}{\A} : a\in\dom{\Lip_\Dirac}, \Lip_\Dirac(a) \leq 1 \right\} \text.
  \end{equation}
\end{definition}

\begin{remark}
  When $(\A,\Hilbert,\Dirac)$ is a metric spectral triple, the group which we denote here by $\LargeIso(\A,\Lip_\Dirac)$ was denoted by $\LargeIso(\A,\Hilbert,\Dirac)$ in \cite{Conti21}.
   Our results about the group of isometries of a {\qcms} in the previous subsection, however, are not dependent on the spectral triple structure, except for the induced metric, hence our more general notation.
\end{remark}

We simplify our notation somewhat when working with metric spectral triples, as explained below.
\begin{notation}\label{not-metric-st}
  If $(\A,\Hilbert,\Dirac)$ is a metric spectral triple, we simplify the notation in Expressions    \eqref{eq:def-MK-for-autos}
  \eqref{eq:def-of-mkell} and \eqref{def:MK-norm-state-space} by writing   $\KantorovichDist{\Dirac}$,  $\KantorovichLength{\Dirac}$ and  $\Kantorovich{\Dirac}$ for $\KantorovichDist{\Lip_\Dirac}$, $\KantorovichLength{\Lip_\Dirac}$,  and $\Kantorovich{\Lip_\Dirac}$, respectively. We also write $\qdiam{\A}{\Dirac}$ for $\qdiam{\A}{\Lip_{\Dirac}}$.
\end{notation}

If the spectral triple $(\A,\Hilbert,\Dirac)$ is metric, then it follows from Expression (\ref{spectral-triple-iso-comm-eq}) that $\smallIso(\A,\Hilbert,\Dirac)$ is a subgroup of $\LargeIso(\A,\Lip_\Dirac)$. In general, however, $\smallIso(\A,\Hilbert,\Dirac)$ is a proper subgroup of $\LargeIso(\A,\Lip_\Dirac)$. The following example establishes this point, and it is an adaptation of \cite[Proposition 2.4]{Conti21}, though the latter example is not that of a metric spectral triple.

\begin{example}\label{ex:Iso-diff-ISO}
  Let $\A_2$ be the C*-algebra $M_2(\C)$ of $2\times 2$ matrices over $\C$ represented on $\C^4$ via the $\ast$-representation $\pi : a\in M_2(\C) \mapsto \begin{pmatrix} a & 0 \\ 0 & a\end{pmatrix}$. We also define the following two unitary self-adjoint involutions: 
  \[S \coloneqq \begin{pmatrix} 0 & 1 \\ 1 & 0 \end{pmatrix} \quad \text{ and }\quad J \coloneqq \begin{pmatrix} 1 & 0 \\ 0 & -1 \end{pmatrix}\] 
  and the Dirac operator
  \begin{equation*}
    \Dirac \coloneqq \begin{pmatrix} S & 0 \\ 0 & J \end{pmatrix} \text.
  \end{equation*}
  Of course, $(\A_2,\C^4,\Dirac)$ is a spectral triple. Moreover, for  all  $a \in \A_2$, we have:
  \begin{equation}\label{eq:com--repre-A-2}
    [\Dirac,\pi(a)] = \begin{pmatrix} [S,a] & 0 \\ 0 & [J,a] \end{pmatrix},
  \end{equation}
  which implies that  $[\Dirac,\pi(a)] = 0$ if, and only if, $a$ commutes with both $S$ and  $J$, which is equivalent to 
  
  \[
  \begin{split}
  a&=\begin{pmatrix} x & y \\ y & x \end{pmatrix}, \text{ for some } x,y \in \C, \text{ and }\\
  a &= \begin{pmatrix} x' & 0 \\ 0 & y'  \end{pmatrix} \text{ for some } x',y'\in\C.
  \end{split}
  \]
Therefore $[\Dirac,\pi(a)] = 0$ if, and only if $a=x 1$ for some $x\in\C$.
Thus, setting $\Lip_\Dirac(a) \coloneqq \opnorm{[\Dirac,\pi(a)]}{}{\C^4}$ for all $a\in\A$, the pair $(\A,\Lip_\Dirac)$ is a {\qcms}.

Now, a  straightforward calculation shows that $[S,J a J] = -J[S,a]J$. So if we define the unitary involution $U \coloneqq \pi(J)$,  then  by Equation \eqref{eq:com--repre-A-2} we have that, for all $a\in\A_2$:
  \begin{equation*}
    [\Dirac,\pi(J a J^\ast)] = [\Dirac,U\pi(a)U^\ast] = \begin{pmatrix} -J[S,a]J & 0 \\ 0 & -[J,a] \end{pmatrix}.
  \end{equation*}
  Therefore  $\Lip_\Dirac(J a J) = \Lip_\Dirac(a)$, which  shows that  $\AdRep{J}$ is a full quantum isometry, i.e., $\AdRep{J} \in \LargeIso(\A,\Lip_\Dirac)$.

We also note that $[\Dirac,U]\neq 0$ since $[S,J]\neq 0$. 
In addition,  we claim that there exists no unitary $V \in M_4(\C)$ which commutes with $\Dirac$ and implements $\AdRep{U}$ on $\pi(\A_2)$. 

Indeed, assume by contradiction that   $V$ is  a unitary in $M_4(\C)$, with  $\AdRep{V}$ acting on $\pi(\A_2)$ as $\AdRep{U}$. Thus, $\AdRep{V U^\ast}$ acts as the identity on $\pi(\A_2)$. Now, if $W = \begin{pmatrix} a_1 & a_2 \\ a_3 & a_4 \end{pmatrix}$, with $a_1,a_2,a_3,a_4\in \A_2$, is some unitary such that $\AdRep{W}$ acts trivially on $\pi(\A_2)$, then $W\begin{pmatrix} a & 0 \\ 0 & a \end{pmatrix} = \begin{pmatrix} a & 0 \\ 0 & a \end{pmatrix} W$, and thus $[a_j,a] = 0$ for $j\in\{1,2,3,4\}$, for all $a\in\A_2$. Therefore, $a_1,a_2,a_3,a_4 \in \C \, 1$, which implies:
\[V = \begin{pmatrix} a_1 J & a_2 J \\ a_3 J & a_4 J \end{pmatrix}.\]

  Now, if $[\Dirac,V] = 0$, then we also must have $S a_2 J = a_2 J^2 = a_2$. Since $SJ\notin \C$, we conclude that $a_2 = 0$. Similarly, $a_3 = 0$. Moreover, $S a_1 J = a_1 J S$ implies $a_1 = 0$ as well. So $V$ can not be a unitary.

  Therefore, no unitary exists which commute with $\Dirac$ and implements $\AdRep{U}$ on $\pi(\A_2)$, which implies that $\AdRep{U} \in \LargeIso(\A,\Lip)$ but $\AdRep{U} \notin \smallIso(\A,\Hilbert,\Dirac)$.
\end{example}

The isometry group of a metric spectral triple, while it may be a proper subgroup of the isometry group of the underlying {\qcms}, is always a closed subgroup for the Monge-Kantorovich distance of Definition (\ref{Kantorovich-morphism-def}).

\begin{theorem}\label{compact-iso-group-thm}
  The group $\smallIso(\A,\Hilbert,\Dirac)$ is a closed subgroup in $\LargeIso(\A,\Lip_\Dirac)$ --- in particular, it is compact --- for the metric $\KantorovichDist{\Dirac}$.
\end{theorem}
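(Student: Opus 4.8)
The plan is to show $\smallIso(\A,\Hilbert,\Dirac)$ is closed inside the compact group $\LargeIso(\A,\Lip_\Dirac)$ for $\KantorovichDist{\Dirac}$, from which compactness follows since a closed subset of a compact set is compact. (That $\smallIso(\A,\Hilbert,\Dirac)\subseteq\LargeIso(\A,\Lip_\Dirac)$ has already been recorded via Expression~(\ref{spectral-triple-iso-comm-eq}), and the compactness of $\LargeIso(\A,\Lip_\Dirac)$ is Corollary~(\ref{cor:ISO-compact}).) So let $(\alpha_n)_{n\in\N}$ be a sequence in $\smallIso(\A,\Hilbert,\Dirac)$ converging for $\KantorovichDist{\Dirac}$ to some $\alpha$; since $\LargeIso(\A,\Lip_\Dirac)$ is closed (indeed compact), $\alpha\in\LargeIso(\A,\Lip_\Dirac)$, so the only thing to prove is that $\alpha$ lies in $\smallIso(\A,\Hilbert,\Dirac)$, i.e. that $\alpha$ is implemented by a unitary $U$ on $\Hilbert$ that preserves $\dom{\Dirac}$ and commutes with $\Dirac$. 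For each $n$ fix a unitary $U_n$ on $\Hilbert$ with $\AdRep{U_n}=\alpha_n$, $U_n\dom{\Dirac}=\dom{\Dirac}$, and $[\Dirac,U_n]=0$.

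First I would extract a convergent subnet of $(U_n)$. The $U_n$ all lie in the unit ball of $\mathcal{B}(\Hilbert)$, which is compact in the weak operator topology, so pass to a subnet $(U_{n_i})$ converging weakly to some contraction $U$. The key steps are then: (i) upgrade this to strong convergence on a suitable dense subspace and show $U$ is a unitary; (ii) show $\AdRep{U}=\alpha$; (iii) show $U$ commutes with $\Dirac$ and preserves its domain. For (ii): for $a\in\A$ and $\xi,\eta\in\Hilbert$, $\langle U_{n_i} a U_{n_i}^\ast\xi,\eta\rangle = \langle \alpha_{n_i}(a)\xi,\eta\rangle \to \langle\alpha(a)\xi,\eta\rangle$ by norm-convergence $\alpha_{n_i}(a)\to\alpha(a)$ (which follows from $\KantorovichDist{\Dirac}$-convergence together with Theorem~(\ref{KantorovichDist-thm})); one must check the left side converges to $\langle UaU^\ast\xi,\eta\rangle$, which requires the product $U_{n_i}(\cdot)U_{n_i}^\ast$ to converge — weak convergence alone does not multiply, so this is where strong convergence of $U_{n_i}$ and of $U_{n_i}^\ast$ is needed. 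The standard trick: from $[\Dirac,U_n]=0$ each $U_n$ commutes with the resolvent $(\Dirac - z)^{-1}$ and with bounded Borel functions of $\Dirac$; combined with the fact that $\Dirac$ has compact resolvent (being part of a spectral triple), one gets that on each spectral subspace of $\Dirac$ the $U_n$ act as finite-rank unitaries in the respective eigenspaces, and a diagonal/compactness argument promotes weak convergence to strong convergence of both $U_{n_i}$ and $U_{n_i}^\ast$ there; then $U$ is unitary and $\AdRep{U}=\alpha$.

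For (iii): since each $U_n$ commutes with $\Dirac$, it commutes with the bounded operator $f(\Dirac)$ for every bounded continuous $f$; passing to the limit in the strong topology gives $U f(\Dirac) = f(\Dirac) U$ for all such $f$, in particular for the resolvents $f(t)=(t-z)^{-1}$, $z\notin\R$. Commuting with all resolvents of a self-adjoint operator forces $U\dom{\Dirac}=\dom{\Dirac}$ and $[\Dirac,U]=0$ by a routine argument (if $U$ commutes with $(\Dirac-i)^{-1}$, then for $\xi\in\dom{\Dirac}$ write $\xi=(\Dirac-i)^{-1}\zeta$ and compute). I expect the main obstacle to be step (i)–(ii): passing from weak to strong convergence of the implementing unitaries and verifying that the limit $U$ is genuinely unitary (not merely a contraction) and genuinely implements $\alpha$; the compact-resolvent structure of the spectral triple and the fact that each $U_n$ is reduced by the (finite-dimensional) eigenspaces of $\Dirac$ is the essential input that makes this work, and I would organize the argument around a simultaneous diagonalization of $\Dirac$ and the careful bookkeeping of the net on each eigenspace. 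An alternative, possibly cleaner, route would be to avoid nets by noting that $\KantorovichDist{\Dirac}$ is metrizable so it suffices to work with sequences, and to phrase the weak-$\ast$ compactness argument for $(U_n)$ in $\mathcal{B}(\Hilbert)$ directly; I would check whether the paper's hypotheses let me assume $\Hilbert$ separable so that the weak operator topology on the unit ball is metrizable, which simplifies the extraction of limits considerably.
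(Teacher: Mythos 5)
Your proposal is correct, and it reaches the conclusion by a genuinely different mechanism than the paper, although both hinge on the compact resolvent of $\Dirac$. The paper works with the set $B=\{\xi\in\dom{\Dirac} : \norm{\xi}{\Hilbert}+\norm{\Dirac\xi}{\Hilbert}\leq 1\}$, which is compact in $\Hilbert$ precisely because $\Dirac$ has compact resolvent; since each $U_n$ preserves $\dom{\Dirac}$ and commutes with $\Dirac$, it maps $B$ into $B$, and Arzel\`a--Ascoli applied to the equicontinuous family $\{U_n\}$ on $B$ produces a uniformly convergent subsequence whose limit $V$ is an isometry; the same is done for $(U_n^\ast)$, the composition of the two limits is shown to be the identity (so $V$ is unitary), commutation with $\Dirac$ follows from closedness of $\Dirac$, and $\AdRep{V}=\alpha$ follows from a norm estimate on $B$. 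You instead decompose $\Hilbert$ into the (countably many, finite-dimensional, mutually orthogonal) eigenspaces of $\Dirac$, observe that each $U_n$ is reduced by them, and use compactness of finite-dimensional unitary groups together with a diagonal extraction to get a subsequence converging strongly, together with its adjoints, to a unitary $U$ that preserves each eigenspace --- whence $U\dom{\Dirac}=\dom{\Dirac}$, $[\Dirac,U]=0$, and $\AdRep{U}=\alpha$ by the two-term estimate you indicate. Your route is arguably more concrete, as it reduces everything to finite-dimensional compactness and makes the commutation with $\Dirac$ immediate from eigenspace preservation; the paper's route avoids invoking the spectral theorem explicitly and packages the compactness into the single set $B$. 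Two small remarks: your opening passage through weak-operator subnets is an unnecessary detour, since the eigenspace argument directly yields a strongly convergent subsequence (and closedness in a metric topology only requires sequences anyway); and your worry about separability of $\Hilbert$ is moot, since compactness of the resolvent forces $\Hilbert$ to have a countable orthonormal basis of eigenvectors of $\Dirac$.
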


\begin{proof}
   As we already noted, by  Expression (\ref{spectral-triple-iso-comm-eq}) it is straightforward to see that $\smallIso(\A,\Hilbert,\Dirac)$ is a subgroup of $\Aut{\A}$ included in  $\LargeIso(\A,\Lip_\Dirac)$. It is thus sufficient to prove that $\smallIso(\A,\Hilbert,\Dirac)$ is closed in the group $\LargeIso(\A,\Lip_\Dirac)$, which is compact for $\KantorovichDist{\Dirac}$ by Theorem (\ref{compact-Iso-thm}).
  To do so, define,   for all $\xi \in \dom{\Dirac}$:  
 \begin{equation*}
 \CDN(\xi) \coloneqq \norm{\xi}{\Hilbert} + \norm{\Dirac\xi}{\Hilbert}
 \end{equation*}
Let $(\alpha_n)_{n\in\N}$ be a sequence in $\smallIso(\A,\Hilbert,\Dirac)$ converging for the Monge-Kantorovich metric  $\KantorovichDist{\Dirac}$ to $\alpha \in \LargeIso(\A,\Lip_\Dirac)$. Our goal is to prove that $\alpha \in \smallIso(\A,\Hilbert,\Dirac)$.

By definition, for each $n\in\N$, there exists a unitary $U_n\in \mathcal{B}(\Hilbert)$ such that $U_n\dom{\Dirac}=\dom{\Dirac}$, $[\Dirac,U_n] = 0$ and $\alpha_n = \AdRep{U_n}$. Define 
  \[B \coloneqq \left\{ \xi \in \dom{\Dirac} : \CDN(\xi)\leq  1\right\}.\] 
  As $\Dirac$ has compact resolvent, the set $B$ is compact in $\Hilbert$ by \cite[Theorem 2.7]{Latremoliere18g}. Since $U_n\dom{\Dirac}=\dom{\Dirac}$ for all $n\in\N$, we have, for all $\xi \in B$:
  \begin{align*}
    \CDN(U_n\xi)
    &= \norm{U_n\xi}{\Hilbert} + \norm{\Dirac U_n\xi}{\Hilbert} \\
    &= \underbracket[1pt]{\norm{\xi}{\Hilbert}}_{U_n \text{ is unitary}} + \underbracket[1pt]{\norm{U_n\Dirac \xi}{\Hilbert}}_{[\Dirac,U_n] = 0} \\
    &= \norm{\xi}{\Hilbert} + \norm{\Dirac\xi}{\Hilbert} = \CDN(\xi)\text.
  \end{align*}

  Thus $U_n B \subseteq B$. 

  Therefore the set $\{ U_n : n \in \N \}$ is an equicontinuous subset of the set $C(B,B)$ of $B$-valued continuous functions over $B$ in the standard operator norm $\opnorm{\cdot}{}{B}$. Since  $B$ is  compact,  by Arzel\`a-Ascoli's theorem, $\{ U_n : n \in \N \}$ is totally bounded for $\opnorm{\cdot}{}{B}$. Thus, there exists a Cauchy subsequence $(U_{f(n)})_{n\in\N}$ of $(U_n)_{n\in\N}$ with respect to  $\opnorm{\cdot}{}{B}$. Since $C(B,B)\subseteq C(B,\Hilbert)$ and $\Hilbert$ is complete, $C(B,\Hilbert)$ is complete for $\opnorm{\cdot}{}{B}$. Therefore, the Cauchy sequence $(U_{f(n)})_{n\in\N}$ converges (uniformly) in $C(B,\Hilbert)$  with respect to $\opnorm{\cdot}{}{B}$. Let $V$ denote its limit --- so far, $V$ is only defined on $B$; since $B$ is closed, we note that $V B\subseteq B$.

  By definition of $\opnorm{\cdot}{}{B}$, for all $\xi \in B$, we have in particular that $\lim_{n\rightarrow\infty} \norm{U_n\xi - V\xi}{\Hilbert} = 0$. Now, let $\xi \in \dom{\Dirac}\setminus\{0\}$. Since $\frac{1}{\CDN(\xi)}\xi \in B$ and $U_n$ is linear for all $n\in\N$, we conclude that $(U_{f(n)}\xi)_{n\in\N}$ converges in norm to $\CDN(\xi)V(\CDN(\xi)^{-1}\xi)$ --- we denote the latest expression by $V\xi$. Thus, since  $V$ is the pointwise limit of the sequence $(U_{f(n)})_{n\in\N}$ of linear maps, $V$ is also linear on $\dom{\Dirac}$. Since $U_n$ is unitary for all $n\in\N$, for all $\xi \in \dom{\Dirac}$, we have $\norm{U_n\xi}{\Hilbert} = \norm{\xi}{\Hilbert}$, and therefore, we conclude that $\norm{V\xi}{\Hilbert} = \norm{\xi}{\Hilbert}$. So $V$ can be extended to $\Hilbert$ as a continuous linear map of norm $1$.

  Because $\dom{\Dirac}$ is dense,  fixed  $\xi \in \Hilbert$ and $\varepsilon > 0$, there exists $\eta\in\dom{\Dirac}$ such that $\norm{\xi-\eta}{\Hilbert} < \frac{\varepsilon}{3}$, Moreover, since $ U_{f(n)} \to V$ uniformly on $B$,  there exists $N\in\N$ such that for all $n\geq N$, we have $\norm{U_{f(n)}\eta-V\eta}{\Hilbert} < \frac{\varepsilon}{3}$. Therefore: \[\norm{U_{f(n)}\xi-V\xi}{\Hilbert}\leq \norm{U_{f(n)}(\xi-\eta)}{\Hilbert} + \norm{U_{f(n)}\eta-V\eta}{\Hilbert} + \norm{V(\eta-\xi)}{\Hilbert} < \varepsilon.\]
  
 So $V$ is the SOT-pointwise limit of the sequence $(U_{f(n)})_{n\in\N}$. In particular, $V$ is an isometry, since $\norm{V\xi}{\Hilbert} = \lim_{n\rightarrow\infty} \norm{U_{f(n)}\xi}{\Hilbert}$.

  By Lemma (\ref{Kantorovich-Length-lemma}), we note that $(\alpha_n^{-1})_{n\in\N}$ converges to $\alpha^{-1}$ for $\KantorovichDist{\Lip}$. Of course, $\alpha_n^{-1} = \AdRep{U_n^\ast}$ for all $n\in\N$. Moreover, $U_n^\ast\dom{\Dirac}=\dom{\Dirac}$ and $[\Dirac,U_n^\ast]=-[\Dirac,U_n]=0$ since $\Dirac$ is self-adjoint. By applying the above methods, we conclude that there exists some convergent subsequence $(U_{f(g(n))}^\ast)_{n\in\N}$ of $(U_{f(n)}^\ast)_{n\in\N}$ convergent for the SOT topology, with respect to  $\opnorm{\cdot}{}{B}$. From now on, we write $h = f\circ g$. Let $W$ be the SOT limit of $(U_{h(n)}^\ast)_{n\in\N}$; $W$ is again an isometry on $\Hilbert$. As above, we also note that $W B \subseteq B$.

  By putting together all of the above results,  we see that for any fixed $\varepsilon>0$,  there exists $N \in \N$ such that if $n\geq N$, then $\opnorm{U_{h(n)}-V}{}{B} < \frac{\varepsilon}{2}$, and there exists $N' \in \N$ such that, if $n\geq N'$, then $\opnorm{U_{h(n)}^\ast - W}{}{B}<\frac{\varepsilon}{2}$, so for all $n\geq \max\{N,N'\}$  and  for all $\xi \in B$:
  
  \begin{align*}
    \norm{\xi-VW \xi}{\Hilbert}
    &\leq \norm{\xi- U_{h(n)}W\xi}{\Hilbert} + \underbracket[1pt]{\norm{U_{h(n)} W\xi - V W\xi}{\Hilbert}}_{\text{note: }W\xi \in B} \\
    &\leq \norm{\xi-U_{h(n)} U_{h(n)}^\ast \xi}{\Hilbert} + \norm{U_{h(n)} (U_{h(n)}^\ast - W)\xi}{\Hilbert} + \opnorm{U_{h(n)}-V}{}{B} \\
    &\leq 0 + \opnorm{U_{h(n)}^\ast - W}{}{B} + \opnorm{U_{h(n)} - V}{}{B} \\
    &< \frac{\varepsilon}{2} + \frac{\varepsilon}{2} = \varepsilon \text.
  \end{align*}
  Thus $VW\xi = \xi$ for all $\xi \in B$ since $\varepsilon > 0$ was arbitrary. Since $B$ is total in $\Hilbert$ and $VW$ is continuous and linear, we conclude that $VW\xi=\xi$ for all $\xi \in \Hilbert$. Hence, $V$ is surjective; since $V$ is an isometry, we conclude that $V$ is a unitary over $\Hilbert$.

  Moreover, fixed  $\xi \in \dom{\Dirac}$, the sequence $(U_{f(n)}\xi)_{n\in\N}$ converges to $V\xi$. On the other hand, for all $n\in\N$:
  \begin{equation*}
    \Dirac U_{f(n)}\xi = U_{f(n)} \Dirac\xi
  \end{equation*}
  converges to $V\Dirac\xi$. Since $\Dirac$ is closed, we conclude that $V\xi \in \dom{\Dirac}$ and $\Dirac V\xi = V\Dirac \xi$.  So $V\dom{\Dirac}\subseteq\dom{\Dirac}$ and $[\Dirac,V] = 0$.

  Our next goal is to prove that $\alpha = \AdRep{V}$. Fix $a \in \dom{\Lip}$ with $\Lip(a)\leq \frac{1}{2}$ and $\norm{a}{\A} \leq \frac{1}{2}$. If $\xi \in B$, then $\CDN(a\xi)\leq (\norm{a}{\A} + \Lip(a))\CDN(\xi) \leq 1$. So $a B \subseteq B$; moreover 
 $V^\ast\xi$ and $U_{f(n)}^\ast\xi$ are always also in $B$. 
 Fix $\varepsilon > 0$, and let $N\in\N$ such that, if $n\geq N$, then $\opnorm{U_{h(n)}-V}{}{B} < \frac{\varepsilon}{2}$ and $\opnorm{U_{h(n)}^\ast-V^\ast}{}{B} < \frac{\varepsilon}{2(\norm{a}{\A}+1)}$. Thus, for all $\xi \in B$ and $n\geq N$, we conclude:
  \begin{align*}
    \norm{U_{h(n)} a U_{h(n)}^\ast\xi - V a V^\ast\xi}{\Hilbert}
    &= \norm{(U_{h(n)} - V) aU_{h(n)}^\ast\xi}{\Hilbert}  + \norm{V a U_{h(n)}^\ast\xi - V a V^\ast\xi}{\Hilbert} \\
    &\leq \opnorm{U_{h(n)} - V}{}{B}\norm{a}{\A} + \norm{a}{\A} \opnorm{U_{h(n)}^\ast-V^\ast}{}{B} \\
    &< \frac{\varepsilon}{2} + \frac{\varepsilon}{2} = \varepsilon \text.
  \end{align*}
  Thus $\alpha$ and $\AdRep{V}$ agree on the total set $\left\{a\in\dom{\Lip}:\Lip(a)\leq \frac{1}{2},\norm{a}{\A}\leq\frac{1}{2}\right\}$ of $\A$. By continuity, $\alpha$ and $\AdRep{V}$ agree on $\A$.
  
  Thus we have shown that $\alpha \in \smallIso(\A,\Hilbert,\Dirac)$, as required. Thus $\smallIso(\A,\Hilbert,\Dirac)$ is closed in $\LargeIso(\A,\Lip)$ for $\KantorovichDist{\Lip}$, and therefore it is compact for this same metric since $\LargeIso(\A,\Lip)$ is closed by Theorem (\ref{compact-Iso-thm}).
\end{proof}

We conclude our section with two observations. First, just as we observed for $\ast$-automorphisms and pointwise convergence in the previous subsection, the SOT limit of unitaries is in general an isometry but not a unitary (the closure of the group of unitaries in the SOT over a Hilbert space is, in fact, the space of all linear isometries of that space). An example is given by essentially the same construction as we did with automorphisms: on $\ell^2(\N)$, for each $n\in\N$, we define $U_n((x_k)_{k\in\N})\coloneqq (x_n,x_0,x_1,x_2,\ldots,x_{n-1},x_{n+1},\ldots)$; the operator $U_n$ is a unitary for each $n\in\N$ and $(U_n)_{n\in\N}$ converges to the unilateral shift on $\ell^2(\N)$. So, once again, the addition of  quantum metrics allows us to work on compact collections of unitaries for the SOT, and guarantees that the SOT-limits of some sequences of unitaries are still unitaries.

Second, we note that Park proved in \cite[Theorem 4.2]{Efton95} that there exists a spectral triple $(C^\ast(S),\ell^2(\N),\Dirac)$, defined on the C*-algebra $C^\ast(S)$ generated by the unilateral shift $S$ on $\ell^2(\N)$, whose group $\smallIso(C^\ast(S),\ell^2(\N),\Dirac)$ is not compact in the topology of pointwise convergence. The Dirac operator $\Dirac$ is defined to be the closure of the operator defined by $\Dirac((x_n)_{n\in\N}) \coloneqq (n x_n)_{n\in\N}$ for all finitely supported $(x_n)_{n\in\N}\in\ell^2(\N)$.   As it is very easily checked, $\Dirac$ commutes with all the operators of the form $S^n (S^{\ast})^n$ for $n\in\N$, which span an infinite dimensional C*-subalgebra of $C^\ast(S)$. Therefore, $(C^\ast(S),\ell^2(S),\Dirac)$ is far from a metric spectral triple. Indeed, all other examples in \cite{Efton95} are constructed around metric spectral triples (before this concept was discovered), and thus give compact isometry groups. We see that this early observation on a few interesting examples is in fact a general phenomenon for \emph{metric} spectral triples.

\section{The Propinquity and Isometry Groups}\label{sec:prop-isom-gps}

We now turn to the matter of approximating isometry groups of {\qcms s} and metric spectral triples. In fact, we wish to approximate not only the groups themselves, but their actions on the underlying C*-algebras of {\qcms s}. To this end, the tool we choose is the extension of the Gromov-Hausdorff propinquity to certain proper metric group actions (and more generally, actions of proper monoids, though this will not be our focus here); this extension is called the covariant propinquity, introduced in \cite{Latremoliere18b} by the last author. We first see that the general theory for this metric has some interesting implications about isometry groups. We then turn to the specific situation where we work with inductive limits of {\qcms s} and of metric spectral triples, where a clearer picture emerges.

\medskip

In the rest of this paper, we will work with families of {\qcms s}, under the following assumption.
\begin{convention}
	In the rest of this paper, we fix the constants $\Omega \geq 1$, $\Omega'\geq 0$ in Definition (\ref{qcms-def}): all {\qcms s} considered here will satisfy this fixed $(\Omega,\Omega')$-quasi-Leibniz property.
\end{convention}

\subsection{The Covariant Propinquity}\label{sec:cov-Prop}

Our primary interest in {\qcms s} lies in our ability to construct a metric on the space of all {\qcms s}, called the \emph{(dual Gromov-Hausdorff) propinquity}, in a manner analogous to the Gromov-Hausdorff distance on the class of compact metric spaces.

\begin{definition}[{\cite{Latremoliere13b,Latremoliere14}}]\label{def:prop}
  The (dual) \emph{propinquity} $\dpropinquity{}$ between two {\qcms s} $(\A,\Lip_\A)$ and $(\B,\Lip_\B)$ is the nonnegative number:
  \begin{align*}
    \dpropinquity{}((\A,\Lip_\A),(\B,\Lip_\B)) = \inf\Big\{  &\,\max_{j\in\{1,2\}}\Haus{\Lip_\D}\left(\StateSpace(\D),\left\{\varphi\circ\pi_j:\varphi\in\StateSpace(\A_j)\right\}\right) : \\ &\, (\D,\Lip_\D) \text{ is a {\qcms}, and } \\ &\, \pi_j:(\D,\Lip_\D)\rightarrow(\A_j,\Lip_j) \text{ are quantum isometries for $j\in\{1,2\}$}  \Big\} \text,
  \end{align*}
  where $\Haus{\Lip_\D}$ is the Hausdorff distance induced by the {\MongeKant} $\Kantorovich{\Lip_\D}$ for any {\qcms} $(\D,\Lip_\D)$.
\end{definition}

\begin{theorem}[{\cite{Latremoliere13b}}]\label{gh-complete}
  The propinquity is a complete metric, up to full quantum isometry, on the class of {\qcms s}, whose restriction to classical compact metric spaces induces the topology of the Gromov-Hausdorff distance.
\end{theorem}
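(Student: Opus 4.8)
The plan is to prove Theorem~\ref{gh-complete} in four stages: (i) that $\dpropinquity{}$ is a pseudo-metric on the class of {\qcms s}, (ii) that it vanishes exactly between fully quantum isometric pairs, (iii) that it is complete, and (iv) that on the classical subclass it induces the Gromov--Hausdorff topology. For (i), symmetry of $\dpropinquity{}$ is immediate from the definition, and finiteness follows from a standard trivial-tunnel construction between any two {\qcms s}, with a bound controlled by the quantum diameters. The triangle inequality is the substantive part: from quantum isometries $\pi_j:\D\to\A_j$ ($j\in\{1,2\}$) and $\varpi_k:\G\to\A_k$ ($k\in\{2,3\}$) witnessing $\dpropinquity{}(\A_1,\A_2)$ and $\dpropinquity{}(\A_2,\A_3)$ up to $\varepsilon$, one composes the two ``tunnels'': for each $\delta>0$ one builds a single {\qcms} on (a suitable part of) $\D\oplus\G$ whose Lip-norm adds to $\max\{\Lip_\D(x),\Lip_\G(y)\}$ a penalty $\tfrac{1}{\delta}\norm{\pi_2(x)-\varpi_2(y)}{\A_2}$ for disagreement over $\A_2$, and one checks that it carries quantum isometries onto $\A_1$ and $\A_3$ whose Hausdorff errors sum to at most $\dpropinquity{}(\A_1,\A_2)+\dpropinquity{}(\A_2,\A_3)$ plus quantities tending to $0$ with $\varepsilon$ and $\delta$; the $(\Omega,\Omega')$-quasi-Leibniz inequality of Definition~\ref{qcms-def}, with the constants fixed by the running Convention, is precisely what keeps the composed Lip-norm admissible, so letting $\varepsilon,\delta\to0$ yields the triangle inequality.

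For (ii), suppose $\dpropinquity{}((\A,\Lip_\A),(\B,\Lip_\B))=0$ and pick {\qcms s} $\D_n$ with quantum isometries $\pi_n:\D_n\to\A$ and $\rho_n:\D_n\to\B$ whose state-space Hausdorff errors tend to $0$. Fixing a reference state $\mu$ of $\A$, the norm-compactness of $\{a\in\dom{\Lip_\A}:\Lip_\A(a)\leq1,\ \mu(a)=0\}$ (the compactness criterion recalled after Definition~\ref{qcms-def}) lets one lift each such $a$ through $\pi_n$ nearly $\Lip$-isometrically, push it forward by $\rho_n$, and extract by a diagonal argument over a countable $\Lip$-dense subset a subsequential limit $\phi$ defined on $\dom{\Lip_\A}$. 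One checks that $\phi$ is linear, isometric for the C*-norm (hence extends to $\A$), and $\Lip$-norm preserving, and --- using the $(\Omega,\Omega')$-quasi-Leibniz control on each $\D_n$ together with the fact that the maps $\pi_n,\rho_n$ are $\ast$-morphisms --- that $\phi$ is multiplicative and $\ast$-preserving in the limit. A symmetric construction produces $\psi:\B\to\A$, and the vanishing Hausdorff errors force $\psi\circ\phi=\mathrm{id}_\A$ and $\phi\circ\psi=\mathrm{id}_\B$, so $\phi$ is a full quantum isometry; the converse implication is trivial, taking $\D=\A$.

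For (iii), given a $\dpropinquity{}$-Cauchy sequence I would pass to a subsequence with $\dpropinquity{}(\A_n,\A_{n+1})<2^{-n}$, choose witnessing {\qcms s} $\D_n$ between consecutive terms, and assemble a limit by realizing all of the $\A_n$ and $\D_n$ inside one C*-algebra and defining a limiting Lip-norm whose unit ball is squeezed between the totally bounded unit balls of the $\D_n$ up to summable errors; an appropriate completion then yields a candidate $(\A_\infty,\Lip_\infty)$, one verifies the four axioms of Definition~\ref{qcms-def} --- most delicately that $\{\Lip_\infty\leq1\}$ is closed and that $\Kantorovich{\Lip_\infty}$ metrizes the weak* topology, again via the compactness criterion now applied to the limit, and that the quasi-Leibniz constants $(\Omega,\Omega')$ are inherited --- and finally one shows $\dpropinquity{}(\A_n,\A_\infty)\to0$, which with the Cauchy property gives convergence. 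I expect this to be the main obstacle: producing the limit {\qcms} from the tunnels and checking that the Monge--Kantorovich metric of the limiting Lip-norm remains weak*-compatible with the prescribed quasi-Leibniz constants is considerably more delicate than the other stages, and is exactly where the running Convention fixing $(\Omega,\Omega')$ does its work (the separation step (ii) is a close second in difficulty).

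For (iv), take compact metric spaces $(X,d_X)$, $(Y,d_Y)$ with $\A=C(X)$, $\B=C(Y)$ carrying their Lipschitz seminorms; I would show $\gh(X,Y)$ and $\dpropinquity{}(C(X),C(Y))$ bound one another up to universal constants, which suffices for the topological claim. From a correspondence realizing $\gh(X,Y)<\varepsilon$ one manufactures an admissible metric on $X\sqcup Y$, embeds isometrically into a compact metric space $Z$, and uses $\D=C(Z)$ with the two restriction maps as quantum isometries, bounding the state-space Hausdorff error by the Hausdorff distance of $X$ and $Y$ inside $Z$ via the fact that the Monge--Kantorovich metric of $C(Z)$ restricts to $d_Z$ on Dirac masses and that states are weak* limits of convex combinations of them. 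Conversely, given any {\qcms} $\D$ with quantum isometries onto $C(X)$ and $C(Y)$ and small state-space Hausdorff error, restriction relates states of $\D$ to states of $C(X)$ and of $C(Y)$, and the Hausdorff distance between the embedded copies of $\StateSpace(C(X))$ and $\StateSpace(C(Y))$ inside $\StateSpace(\D)$ dominates $\gh(X,Y)$ up to a constant; taking the infimum over such $\D$ recovers the bound. Hence the two metrics are Lipschitz-equivalent on the classical subclass and in particular induce the same topology.
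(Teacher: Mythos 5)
The paper does not prove this statement: Theorem \ref{gh-complete} is imported from \cite{Latremoliere13b} (with the triangle inequality for the single-tunnel formulation of Definition \ref{def:prop} supplied by \cite{Latremoliere14} and the compactness/completeness refinements by \cite{Latremoliere15}), so there is no internal proof to compare yours against. Your outline does track the strategy of those references: composition of tunnels with a penalty term for disagreement over the middle algebra, a compactness--diagonal extraction for the coincidence property, an assembled limit for completeness, and bridges built from correspondences for the classical comparison. One point of attribution worth noting: the direct composition of tunnels you invoke for the triangle inequality is the content of the follow-up paper \cite{Latremoliere14}, not of \cite{Latremoliere13b} itself, which instead defines the propinquity via finite chains of tunnels (``journeys'') precisely to sidestep that difficulty; and one must verify that the fixed constants $(\Omega,\Omega')$ survive the composition, which is exactly what the admissibility hypothesis is for.

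That said, what you have written is a road map rather than a proof, and the two stages you yourself flag as hardest are precisely where nothing is actually carried out. In stage (ii), the multiplicativity of the limit map $\phi$ is the entire point of the coincidence property --- it is what distinguishes the propinquity from Rieffel's quantum Gromov--Hausdorff distance, for which distance zero does \emph{not} force a $\ast$-isomorphism --- and ``using the quasi-Leibniz control \ldots{} $\phi$ is multiplicative in the limit'' is an assertion, not an argument; one has to propagate products through the tunnels via target sets and show that $\targetsettunnel{n}{ab}{\cdot}$ is asymptotically determined by $\targetsettunnel{n}{a}{\cdot}$ and $\targetsettunnel{n}{b}{\cdot}$, which is where the quasi-Leibniz inequality genuinely enters. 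In stage (iii) no limit object is constructed at all: the completeness proof in \cite{Latremoliere13b} builds $(\A_\infty,\Lip_\infty)$ as a quotient of a carefully chosen subalgebra of $\prod_n \D_n$, and verifying the four axioms of Definition \ref{qcms-def} for that quotient --- in particular lower semicontinuity of $\Lip_\infty$ and the weak* compatibility of $\Kantorovich{\Lip_\infty}$ --- occupies a substantial part of that paper. The proposal therefore identifies the correct architecture, but it cannot be credited as a proof of the theorem.
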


The \emph{covariant propinquity} $\covpropinquity{}$ extends the construction of the propinquity to actions of proper metric monoids on {\qcms s} via Lipschitz linear maps \cite{Latremoliere18b}; we focus here on actions by Lipschitz $\ast$-morphisms. We recall that a proper metric group $(G,\delta)$ is a group $G$ endowed with a left invariant metric $\delta$ such that $G$ is a topological group for the topology of $\delta$, and the closed balls of $\delta$ are compact.

\begin{notation}
	For any $r\geq 0$, and any metric group $G$, we denote the closed ball of center $r$ and centered at the unit $e_G$ of $G$ as $G[r]$.
\end{notation}

\begin{definition}\label{lip-ds}
  A \emph{Lipschitz dynamical system} $(\A,\Lip_\A,G,\delta,\alpha)$ is a $5$-tuple where $(\A,\Lip_\A)$ is a {\qcms}, $(G,\delta)$ is a proper monoid, and $\alpha$ is an action of $G$ on $(\A,\Lip_\A)$ by Lipschitz morphisms, such that
  \begin{enumerate}
  \item $\forall a\in\A \quad \forall g \in G \quad \lim_{h\rightarrow g} \norm{\alpha^h(a)-\alpha^g(a)}{\A} = 0$, i.e., $\alpha$ is strongly continuous,
  \item $\forall g \in G \quad \exists D > 0 \quad \exists x>0 \quad \forall h \in G \quad \delta(h,g) < x \implies \Lip_\A\circ\alpha^h \leq D \Lip_\A$, i.e., $\alpha$ has locally bounded dilation factor.
  \end{enumerate}
\end{definition}

\begin{notation}
  Let $(\A,\Lip_\A,G,\delta,\alpha)$ be a Lipschitz dynamical system. We will simply write $(\A,\Lip,G,\alpha)$ when the metric on $G$ is clear from the context, and even $(\A,\Lip,G)$ when the metric on $G$ is clear, and when $G$ is given as a group of automorphisms of $\A$, in which case the action is well understood to be the natural action of $G$ on $\A$.
\end{notation}

For our purpose, we simply recall from \cite{Latremoliere18b} that, for any fixed $n \in \N$  and for any $\varepsilon > 0$, 
\begin{equation*}
  \covpropinquity{}((\A_n,\Lip_n,G_n,\delta_n,\alpha_n),(\A_\infty,\Lip_\infty,G_\infty,\delta_\infty,\alpha_\infty)) < \varepsilon
\end{equation*}
whenever
\begin{enumerate}
\item there exists a {\qcms} $(\D,\Lip_\D)$ and there are two quantum isometries $\psi_n: (\D,\Lip_\D)\rightarrow(\A_n,\Lip_n)$ and $\pi_n : (\D,\Lip_\D)\rightarrow (\A_\infty,\Lip_\infty)$ such that
  \begin{equation*}
    \max\left\{\Haus{\Kantorovich{\Lip_\D}}(\StateSpace(\D),\psi_n^\ast\StateSpace(\A_n)), \Haus{\Kantorovich{\Lip_\D}}(\StateSpace(\D),\pi_n^\ast\StateSpace(\A_\infty)) \right\} < \varepsilon\text,
  \end{equation*}
\item there exist $r_n \geq \frac{1}{\varepsilon}$, and unital maps $f_n : G_\infty \rightarrow G_n$ and $\xi_n: G_n \rightarrow G_\infty$ with
  \begin{equation}\label{rn-epsilon-iso-eq-1}
    \sup_{g,g' \in G_\infty[r_n]}\sup_{h \in G_n[r_n]}\left|\delta_n(f_n(g) f_n(g'), h)-\delta_\infty(g g', \xi_n(h))\right| < \varepsilon \text,
  \end{equation}
  and
  \begin{equation}\label{rn-epsilon-iso-eq-2}
    \sup_{g,g'\in G_n[r_n]}\sup_{h \in G_\infty[r_n]}\left|\delta_\infty(\xi_n(g) \xi_n(g'), h)-\delta_n(g g', f_n(h))\right| < \varepsilon \text,
  \end{equation}
  i.e., $(f_n,\xi_n)$ is an $r_n$-local $\varepsilon$-almost isometric isomorphism in the sense of \cite[Definition 2.5]{Latremoliere18b},
\item the following holds:
  \begin{equation*}
    \sup_{\varphi\in\StateSpace(\A_\infty)} \; \inf_{\mu\in\StateSpace(\A_n)} \; \sup_{g \in G_\infty[r_n]}\Kantorovich{\Lip_\D}(\varphi\circ\alpha_\infty^g\circ\pi_n,\mu\circ\alpha_n^{f_n(g)}\circ\psi_n) < \varepsilon
  \end{equation*}
  and
  \begin{equation*}
    \sup_{\varphi\in\StateSpace(\A_n)} \; \inf_{\mu\in\StateSpace(\A_\infty)} \; \sup_{g \in G_n[r_n]}\Kantorovich{\Lip_\D}(\varphi\circ\alpha_n^g\circ\psi_n,\mu\circ\alpha_\infty^{\xi_n(g)}\circ\pi_n) < \varepsilon \text.
  \end{equation*}
\end{enumerate}

\begin{notation}\label{not:ep-iso-iso}
	We will call a $\frac{1}{\varepsilon}$-local $\varepsilon$-almost isometric isomorphism simply an $\varepsilon$-iso-iso, as done in \cite[Definition 2.17]{Latremoliere18g}.
\end{notation}
	
The covariant propinquity between two Lipschitz dynamical systems $(\A,\Lip_\A,G,\delta_G,\alpha)$ and $(\B,\Lip_\B,H,\delta_H,\beta)$ is zero if, and only if, there exists a full quantum isometry $\pi : (\A,\Lip_\A) \rightarrow (\B,\Lip_\B)$ and an isometric group isomorphism $\gamma : G\rightarrow H$ such that, for all $g \in G$, we have $\pi\circ\alpha^g = \beta^{\gamma(g)}\circ\pi$. Among other consequences of this (see \cite{Latremoliere18b} for more details), we note if $(\A_n,\Lip_n,G_n,\delta_n,\alpha_n)_{n\in\N}$ converges to $(\A,\Lip,G,\delta,\alpha)$ for the covariant propinquity, then both the isometry and the isomorphism class of the group $G$ is uniquely defined by this convergence.

\medskip

It is interesting to study the question of completeness for the covariant propinquity; many subclasses of Lipschitz dynamical systems are shown to be complete for $\covpropinquity{}$ in \cite{Latremoliere18c}.  We recall here the following corollary from \cite{Latremoliere18c}, as a useful example of a completeness result.

\begin{theorem}[{\cite[Corollary 4.6]{Latremoliere18b}}]\label{completeness-thm}
  For each $n\in\N$, let $(\A_n,\Lip_n,G_n,\delta_n,\alpha_n)$ be a Lipschitz dynamical system such that:
  \begin{enumerate}
  \item $(\A_n,\Lip_n,G_n,\delta_n,\alpha_n)_{n\in\N}$ is a Cauchy sequence for $\covpropinquity{}$,
  \item for all $\varepsilon > 0$, there exist $\omega>0$ and $N\in\N$ such that, for all $n\geq N$, if $g,h\in G_n$ with $\delta_n(g,h)<\omega$, then $\delta_n(g^{-1},h^{-1}) < \varepsilon$,
  \item for all $\varepsilon > 0$, there exist $\omega>0$ with $N\in\N$ such that, for all $n\geq N$, if $g,h\in G_n$ and $\delta_n(g,h)<\omega$, then $\KantorovichDist{\Lip_n}(\alpha_n^g,\alpha_n^h) < \varepsilon$,
  \item there exists a locally bounded function $D : [0,\infty)\rightarrow[1,\infty)$ such that, for all $n\in\N$ and for all $g\in G_n$, we have $\Lip_n\circ\alpha_n^g\leq D(\delta_n(e_n,g))\Lip_n$.
  \end{enumerate}

  Then there exists a Lipschitz dynamical system $(\A,\Lip,G,\delta,\alpha)$ such that
  \begin{equation*}
    \lim_{n\rightarrow\infty}\covpropinquity{}((\A_n,\Lip_n,G_n,\delta_n,\alpha_n),(\A,\Lip,G,\delta,\alpha)) = 0 \text.
  \end{equation*}
\end{theorem}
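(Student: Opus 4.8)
The plan is to follow the standard template for proving completeness of a Gromov--Hausdorff-type distance, but carried out simultaneously for the three pieces of data: the {\qcms}, the proper group, and the action. First I would extract a subsequence along which $\covpropinquity{}((\A_n,\Lip_n,G_n,\delta_n,\alpha_n),(\A_{n+1},\Lip_{n+1},G_{n+1},\delta_{n+1},\alpha_{n+1})) < 2^{-n}$; since $\covpropinquity{}$ satisfies a triangle-type inequality, producing a limit for this subsequence suffices. For each consecutive index I would fix witnessing data: a {\qcms} $(\D_n,\Lip_{\D_n})$ with quantum isometries onto $(\A_n,\Lip_n)$ and $(\A_{n+1},\Lip_{n+1})$; an $r_n$-local $2^{-n}$-almost isometric isomorphism $(f_n,\xi_n)$ between $G_n$ and $G_{n+1}$ with $r_n\geq 2^n$, controlling products via Inequalities \eqref{rn-epsilon-iso-eq-1}--\eqref{rn-epsilon-iso-eq-2}; and the covariant compatibility inequalities relating $\varphi\circ\alpha^g\circ(\text{isometry})$ on the two sides.

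The $C^\ast$-algebraic and metric part of the limit is essentially handed to us: by Theorem \eqref{gh-complete}, the sequence $(\A_n,\Lip_n)_{n\in\N}$ already converges for the (non-covariant) propinquity to some {\qcms} $(\A,\Lip)$, and one can arrange the witnessing tunnels so that they refine the $\D_n$ above, keeping track of quantum isometries $\psi_n$ (onto $\A_n$) and $\pi_n$ (onto $\A$) emanating from a common limiting tunnel structure. The genuinely new work is to build the limit proper group $(G,\delta)$ and an action $\alpha$ on $(\A,\Lip)$, coherently with these tunnels. For the group I would argue as for Gromov--Hausdorff convergence of \emph{pointed} proper metric spaces: on each fixed ball the composite near-isomorphisms $G_m[R]\to G_n[R]$ (for $m\leq n$) are uniformly almost-isometric, hence equicontinuous, so by Arzel\`a--Ascoli together with properness (closed balls are compact) and a diagonal extraction one obtains a limiting pointed metric space $(G,\delta)$ receiving almost-isometric maps from every $G_n$. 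The group multiplication descends because \eqref{rn-epsilon-iso-eq-1}--\eqref{rn-epsilon-iso-eq-2} control products under the near-isomorphisms, so $\delta$ is left invariant and multiplication is continuous; Hypothesis (2) --- uniform continuity of inversion near the identity --- is exactly what forces inversion to descend to a continuous map, so that $(G,\delta)$ is a genuine proper metric group rather than merely a metric space with a possibly discontinuous inverse (the same subtlety as SOT-limits of unitaries being only isometries).

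For the action, given $g\in G$ realized as a limit of a coherent sequence $g_n\in G_n$, I would define $\alpha^g$ as the pointwise limit of $\alpha_n^{g_n}$ transported through the tunnels. Hypothesis (4) bounds $\Lip_n\circ\alpha_n^{g_n}\leq D(\delta_n(e_n,g_n))\Lip_n$ uniformly on balls, which both makes the limit a Lipschitz morphism with locally bounded dilation and prevents the Lipschitz data from degenerating, while Hypothesis (3) --- uniform continuity of $h\mapsto\alpha_n^h$ near the identity --- yields independence of the coherent representative (well-definedness) and strong continuity of $\alpha$. Finally I would check that $(\A,\Lip,G,\delta,\alpha)$ satisfies Definition \eqref{lip-ds}, and that the assembled tunnels, iso-isos, and covariant compatibility estimates verify the three clauses defining a small value of $\covpropinquity{}$, so that $\covpropinquity{}((\A_n,\Lip_n,G_n,\delta_n,\alpha_n),(\A,\Lip,G,\delta,\alpha))\to 0$; one then passes from the subsequence to the full sequence by the triangle inequality.

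The main obstacle is carrying out these three limit constructions \emph{simultaneously and coherently} --- in particular, arranging that the limit group acts on the limit {\qcms} obtained from Theorem \eqref{gh-complete}, rather than on some auxiliary object built along the way --- and checking that the covariant compatibility estimates in the chosen tunnels survive the passage to the limit. This is precisely the point at which the three uniformity hypotheses (2), (3), and (4) are tailored to intervene: without (4) the Lipschitz seminorms could blow up along the action, without (3) the limit action need not be continuous or even well defined, and without (2) the limit of the groups need not be a topological group at all.
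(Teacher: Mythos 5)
This statement is not proved in the paper at all: it is imported verbatim, with attribution, as \cite[Corollary 4.6]{Latremoliere18b} (and the surrounding text notes that the relevant completeness results are developed in \cite{Latremoliere18c}). There is therefore no internal proof to compare your sketch against --- the authors treat this as a black box from the literature, and the only thing they actually do with it in this paper is verify its hypotheses in Corollary \eqref{cor:iso-limit}, where conditions (2)--(4) become trivial or near-trivial because the groups act by full quantum isometries and $\delta_n = \KantorovichDist{\Lip_n}$.

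As a reconstruction of how the cited result is proved, your outline is reasonable and identifies the right architecture: reduce to a rapidly Cauchy subsequence, use Theorem \eqref{gh-complete} to obtain the limit {\qcms} (valid, since $\covpropinquity{}$ dominates $\dpropinquity{}$), build the limit proper group from the local almost isometric isomorphisms controlled by \eqref{rn-epsilon-iso-eq-1}--\eqref{rn-epsilon-iso-eq-2}, and use hypotheses (2), (3), (4) respectively to make inversion, the action, and the dilation factors survive the limit. Your diagnosis of what each hypothesis is for matches the role it plays in \cite{Latremoliere18c}. That said, what you have is a plan rather than a proof: the genuinely hard step --- arranging that the limit group acts on the limit {\qcms} produced by Theorem \eqref{gh-complete} and that the covariant compatibility estimates pass to the limit coherently through the composed tunnels --- is exactly the content of a substantial separate paper, and your sketch flags this difficulty without resolving it. In particular, the well-definedness of $\alpha^g$ independently of the coherent representative $(g_n)_n$, and the verification that the assembled data satisfies all clauses of Definition \eqref{lip-ds} with the required uniformity, require the target-set machinery of tunnels that you only gesture at. For the purposes of this paper, the correct move is the one the authors make: cite the result rather than reprove it.
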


\medskip

We now return to the matter of the isometry groups, where the following general result holds:
\begin{corollary}\label{cor:iso-limit}
  If $(\A_n,\Lip_n,G_n,\delta_n,\alpha_n)_{n\in\N}$ is a Cauchy sequence of Lipschitz dynamical systems for $\covpropinquity{}$, where, for each $n\in\N$, we assume that $G_n$ is a closed subgroup of the group $\LargeIso(\A_n,\Lip_n)$, and that $\delta_n = \KantorovichDist{\Lip_n}{}{}$, then there exists a proper metric group $G$ and an action $\alpha$ of $G$ by full quantum isometries on $(\A,\Lip)$ such that
  \begin{equation*}
    \lim_{n\rightarrow\infty} \covpropinquity{}((\A_n,\Lip_n,G_n,\KantorovichDist{\Lip_n},\alpha_n), (\A,\Lip,G,\KantorovichDist{\Lip},\alpha)) = 0 \text.
  \end{equation*}
\end{corollary}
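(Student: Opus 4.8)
The plan is to deduce this from the completeness result Theorem~(\ref{completeness-thm}) for the covariant propinquity, applied to the given Cauchy sequence $(\A_n,\Lip_n,G_n,\KantorovichDist{\Lip_n},\alpha_n)_{n\in\N}$, and then to show that the abstract limit it produces has the stated form. The four hypotheses of Theorem~(\ref{completeness-thm}) are almost automatic here. Hypothesis~(1) is our standing assumption. For Hypothesis~(2): every $g\in G_n\subseteq\LargeIso(\A_n,\Lip_n)$ is a full quantum isometry, so Lemma~(\ref{Kantorovich-Length-lemma})(3) gives $\KantorovichDist{\Lip_n}(g^{-1},h^{-1})=\KantorovichDist{\Lip_n}(g,h)$ for all $g,h\in G_n$, and one may take $\omega=\varepsilon$. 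For Hypothesis~(3): since $G_n$ is given as a group of automorphisms, the action is tautological, $\alpha_n^g=g$, whence $\KantorovichDist{\Lip_n}(\alpha_n^g,\alpha_n^h)=\KantorovichDist{\Lip_n}(g,h)=\delta_n(g,h)$, and again $\omega=\varepsilon$ works. For Hypothesis~(4): full quantum isometries satisfy $\Lip_n\circ\alpha_n^g=\Lip_n$ on $\dom{\Lip_n}$ by \cite[Corollary 2.3]{Latremoliere16b}, so the constant function $D\equiv 1$ does the job. Theorem~(\ref{completeness-thm}) then produces a Lipschitz dynamical system $(\A,\Lip,G,\delta,\alpha)$ — with $(G,\delta)$ a proper metric group, Hypothesis~(2) guaranteeing that the limit monoid is in fact a group — such that $\covpropinquity{}\big((\A_n,\Lip_n,G_n,\KantorovichDist{\Lip_n},\alpha_n),(\A,\Lip,G,\delta,\alpha)\big)\to 0$.

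It then remains to check that $\alpha$ acts by full quantum isometries and that $\delta$ may be taken to be the restriction of $\KantorovichDist{\Lip}$. The limit dynamical system inherits the dilation bound with the same function $D\equiv 1$, so $\Lip\circ\alpha^g\leq\Lip$ on $\dom{\Lip}$ for every $g\in G$; applying this also to $g^{-1}$ and using $\alpha^{g^{-1}}\circ\alpha^g=\mathrm{id}_\A$ forces $\Lip\circ\alpha^g=\Lip$ on $\dom{\Lip}$ and $\alpha^g(\dom{\Lip})=\dom{\Lip}$, so each $\alpha^g$ is a full quantum isometry by \cite[Corollary 2.3]{Latremoliere16b}. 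One then verifies, by tracing how the limit group and its metric are assembled from the $\varepsilon$-iso-isos $(f_n,\xi_n)$ of \cite[Definition 2.5]{Latremoliere18b} and using the strong continuity of $\alpha$ to pass the equalities $\delta_n=\KantorovichDist{\Lip_n}\circ(\alpha_n\times\alpha_n)$ to the limit, that $\delta(g,h)=\KantorovichDist{\Lip}(\alpha^g,\alpha^h)$ for all $g,h\in G$. In particular $\alpha$ is faithful and is an isometry of $(G,\delta)$ onto $\alpha(G)\subseteq\LargeIso(\A,\Lip)$ with the restriction of $\KantorovichDist{\Lip}$; since $(G,\delta)$ is proper, hence complete, $\alpha(G)$ is closed in $\LargeIso(\A,\Lip)$, which is compact by Corollary~(\ref{cor:ISO-compact}), so $\alpha(G)$ is itself a compact — in particular proper — subgroup. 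Replacing $G$ by $\alpha(G)$, which is legitimate because $\covpropinquity{}$ determines a Lipschitz dynamical system only up to isometric covariant isomorphism, yields the desired conclusion.

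The main obstacle will be the middle step of the second paragraph: identifying the abstract limit metric $\delta$ furnished by Theorem~(\ref{completeness-thm}) with $\KantorovichDist{\Lip}$ pulled back along $\alpha$. This requires unwinding the construction of the limit group and its metric from the almost-isometric isomorphisms in the definition of $\covpropinquity{}$ and using the strong continuity of $\alpha$ to transfer the finite-stage identities $\delta_n=\KantorovichDist{\Lip_n}\circ(\alpha_n\times\alpha_n)$ to the limit. Everything else — the four hypotheses of the completeness theorem, the full-quantum-isometry property, and the compactness of $G$ — is immediate, precisely because at every finite stage the group acts tautologically, by full quantum isometries, with metric $\KantorovichDist{\Lip_n}$.
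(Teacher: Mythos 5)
Your overall route is the same as the paper's: verify the four hypotheses of Theorem~(\ref{completeness-thm}) — using Lemma~(\ref{Kantorovich-Length-lemma}) for the inverse map, the tautological identity $\KantorovichDist{\Lip_n}(\alpha_n^g,\alpha_n^h)=\delta_n(g,h)$ for equicontinuity of the action, and $\Lip_n\circ\alpha_n^g=\Lip_n$ for the dilation bound — and then invoke completeness. (You should also note, as the paper does, that properness of each $G_n$ follows from compactness of $\LargeIso(\A_n,\Lip_n)$, i.e.\ Corollary~(\ref{cor:ISO-compact}); a Lipschitz dynamical system requires a proper group, and this is where closedness of $G_n$ is used.)

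The one step where your argument has a genuine gap is the claim that ``the limit dynamical system inherits the dilation bound with the same function $D\equiv 1$.'' Theorem~(\ref{completeness-thm}) as stated only asserts the existence of \emph{some} limit Lipschitz dynamical system $(\A,\Lip,G,\delta,\alpha)$; nothing in its statement guarantees that the limit action satisfies $\Lip\circ\alpha^g\leq\Lip$, and this is not automatic — it requires an argument combining lower semicontinuity of $\Lip$ with the covariant tunnels witnessing convergence. The paper closes exactly this gap by citing \cite[Theorem 4.2]{Latremoliere18c}, which says that a covariant limit of actions by full quantum isometries is again an action by full quantum isometries; without that (or an equivalent argument), your deduction that each $\alpha^g$ is a full quantum isometry does not go through. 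On the other hand, you correctly identify — and the paper silently elides — the issue of replacing the abstract limit metric $\delta$ by $\KantorovichDist{\Lip}$ pulled back along $\alpha$; your sketch of that identification (isometric faithfulness of $\alpha$, closedness of $\alpha(G)$ in the compact group $\LargeIso(\A,\Lip)$, and invariance of $\covpropinquity{}$ under isometric covariant isomorphism) is the right way to make the statement literally true as written, though it too is only sketched.
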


\begin{proof}
  Note that $\delta_n$ is indeed left invariant by Lemma (\ref{Kantorovich-Length-lemma}). Since, for all $n\in\N$, the group $\LargeIso(\A_n,\Lip_n)$ is compact, the closed subgroup $G_n$ is also compact, hence proper. With our choice of metric, the third condition of Theorem (\ref{completeness-thm}) is trivially met. The second condition of Theorem (\ref{completeness-thm}) is also met by Lemma (\ref{Kantorovich-Length-lemma}).
  
  We note that $\Lip_n\circ\alpha_n^g = \Lip_n$ for all $g \in G_n$ and for all $n\in\N$, so the fourth condition of Theorem (\ref{completeness-thm}) is satisfied.
  
  Therefore, for actions of closed groups of quantum isometry, we conclude that Theorem  (\ref{completeness-thm}) applies. Thus, there exists a proper metric group $G$ and an action $\alpha$ of $G$ on $(\A,\Lip)$ by Lipschitz $\ast$-endomorphisms such that
  \begin{equation*}
    \lim_{n\rightarrow\infty} \covpropinquity{}((\A_n,\Lip_n,G_n,\alpha_n,\KantorovichDist{\Lip_n}), (\A,\Lip,G,\alpha,\KantorovichDist{\Lip})) = 0 \text.
  \end{equation*}
  
 	By \cite[Theorem 4.2]{Latremoliere18c}, we also conclude that $\{\alpha^g : g \in G\}$ is indeed a subgroup of full quantum isometries of $(\A,\Lip)$.
 	
\end{proof}

\subsection{Covariant Bridge Builders}

We now want to obtain some examples of convergence of interesting closed subgroups of quantum isometries in the setting of inductive limit of C*-algebras. In \cite{FaLaPa}, the two last authors together with J. Packer proved a necessary and sufficient condition for the convergence, in the propinquity, of inductive limits of {\qcms s} in terms of certain $\ast$-automorphisms called \emph{bridge builders}, defined as follows.

\begin{definition}\label{bridge-builder-def}
  Let $\A_\infty \coloneqq \closure{\bigcup_{n\in\N}\A_n}$, where $(\A_n)_{n\in\N}$ is an increasing sequence of C*-subalgebras of the unital C*-algebra $\A_\infty$, and the unit of $\A_\infty$ lies in $\A_0$.
  
	A \emph{bridge builder} $\pi$ for $((\A_n,\Lip_n),(\A_\infty,\Lip_\infty))$ is a $\ast$-automorphism of $\A_\infty$ with the following property: for all $\varepsilon > 0$, there exists $N\in\N$ such that, if $n\geq N$, then
	\begin{equation}\label{bridge-builder-eq-1}
	\forall a \in \dom{\Lip_\infty} \quad \exists b \in \dom{\Lip_n}: \quad \Lip_n(b)\leq\Lip_\infty(a) \text{ and }\norm{\pi(a)-b}{\A_\infty} < \varepsilon \Lip_\infty(a)
	\end{equation}
	and
	\begin{equation}\label{bridge-builder-eq-2}
	\forall b \in \dom{\Lip_n} \quad \exists a \in \dom{\Lip_\infty} :\quad \Lip_\infty(a)\leq\Lip_n(b) \text{ and }\norm{\pi(a)-b}{\A_\infty} < \varepsilon \Lip_n(b)\text.		
	\end{equation}	
\end{definition}

With the notation of Definition (\ref{bridge-builder-def}), it was shown in \cite[Theorem 2.22]{FaLaPa} that if we assume the existence of some $M > 0$ such that for all $n\in\N$, we have $\frac{1}{M}\Lip_n \leq \Lip_\infty \leq M \Lip_n$ over $\dom{\Lip_n}$, then the existence of a bridge builder is equivalent to the convergence, in the sense of the propinquity, of $(\A_n,\Lip_n)_{n\in\N}$ to $(\A_\infty,\Lip_\infty)$. Moreover, if one finds a bridge builder which is a full quantum isometry, and if the quantum metrics are induced by metric spectral triples, then \cite[Theorem 3.17]{FaLaPa} proves that the spectral triples themselves converge, in the stronger sense of the \emph{spectral propinquity} --- we refer to \cite{Latremoliere18g,Latremoliere22} for the construction and properties of the spectral propinquity.

\medskip

We now wish to define a covariant version of bridge builders, which, in some natural cases, will play a similar role as bridge builders, but for the covariant propinquity. To this end, we will work within the following framework.

\begin{hypothesis}\label{cov-bridge-hyp}
  For each $n\in\Nbar \coloneqq \N\cup\{\infty\}$, let $(\A_n,\Lip_n)$ be a {\qcms}, such that $\A_\infty = \closure{\bigcup_{n\in\N} \A_n}$, where $(\A_n)_{n\in\N}$ is an increasing (for $\subseteq$) sequence of C*-subalgebras of $\A_\infty$, with the unit of $\A_\infty$ in $\A_0$.

  For each $n\in\Nbar$, let $\alpha_n$ be an action of a proper metric group $(G_n,\delta_n)$ on $(\A_n,\Lip_n)$ by full quantum isometries, and for all $n\in\N$, let $(f_n,\xi_n)$ be a $\varepsilon_n$-iso-iso from $G_\infty$ to $G_n$ in the sense of Notation \eqref{not:ep-iso-iso}, with $\lim_{n\rightarrow\infty}\varepsilon_n = 0$.  We set $r_n \coloneqq \frac{1}{\epsilon_n}$.

   Moreover, we assume that, for all $\varepsilon > 0$, there exist $\omega>0$ and $N\in\N$ such that, for all $n\geq N$, if $g,h\in G_n\left[\frac{1}{\varepsilon}\right]$ and $\delta_n(g,h)<\omega$, then $\KantorovichDist{\Lip_n}(\alpha_n^g,\alpha_n^h) < \varepsilon$.
\end{hypothesis}

The core concept for this section is given in the following definition.

\begin{definition}\label{cov-bridge-def}
  A \emph{covariant bridge builder} $\pi$ for the data detailed in Hypothesis (\ref{cov-bridge-hyp}) is a $\ast$-automorphism of $\A_\infty$ with the following property: for all $\varepsilon > 0$, there exists $N\in\N$ such that, if $n\geq N$, then: 
  \begin{enumerate}
  \item for every $a\in\dom{\Lip_\infty}$, there exists $b \in \dom{\Lip_n}$ with $\Lip_n(b)\leq\Lip_\infty(a)$ and
    \begin{equation*}
      \sup_{g \in G_\infty[r_n]} \norm{\pi(\alpha_\infty^g(a))-\alpha_n^{f_n(g)}(b)}{\A_\infty}  < \varepsilon \Lip_\infty(a) \text,
    \end{equation*}
  \item for every $b\in\dom{\Lip_n}$, there exists $a \in \dom{\Lip_\infty}$ with $\Lip_\infty(a)\leq\Lip_n(b)$ and 
    \begin{equation*}
      \sup_{g \in G_n[r_n]} \norm{\pi(\alpha_\infty^{\xi_n(g)}(a))-\alpha_n^{g}(b)}{\A_\infty}  < \varepsilon \Lip_n(b) \text.
    \end{equation*}
  \end{enumerate}	
\end{definition}

The apparent asymmetry in Definition (\ref{cov-bridge-lemma}) can partially be addressed with the following lemma, which will prove helpful in computations later.
\begin{lemma}\label{cov-bridge-lemma}
  Assume Hypothesis (\ref{cov-bridge-hyp}). Let $\pi$ be an automorphism of $\A_\infty$. Then if for  all $\varepsilon > 0$, there exists $N \in \N$ such that for all $n\geq N$,  $a\in\dom{\Lip_\infty}$ and for all  $b \in \dom{\Lip_n}$ with $\max\{\Lip_\infty(a),\Lip_n(b)\}\leq 1$ such that
  \begin{equation*}
    \sup_{g \in G_n[r_n]} \norm{\pi(\alpha_\infty^{\xi_n(g)}(a)) - \alpha_n^g(b)}{\A_\infty} < \frac{\varepsilon}{2}\text,
  \end{equation*}
  then
  \begin{equation*}
    \sup_{g\in G_\infty[r_n]}\norm{\pi(\alpha_\infty^g(a)) - \alpha_n^{f_n(g)}(b)}{\A_\infty} < \varepsilon \text.
  \end{equation*}
\end{lemma}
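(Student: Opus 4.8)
The plan is: fix $\varepsilon>0$, choose $N\in\N$ depending only on $\varepsilon$ (and the fixed data of Hypothesis \eqref{cov-bridge-hyp}), and then for $n\geq N$ and for $a\in\dom{\Lip_\infty}$, $b\in\dom{\Lip_n}$ with $\max\{\Lip_\infty(a),\Lip_n(b)\}\leq 1$ \emph{satisfying} $\sup_{h\in G_n[r_n]}\norm{\pi(\alpha_\infty^{\xi_n(h)}(a))-\alpha_n^h(b)}{\A_\infty}<\frac{\varepsilon}{2}$, bound $\norm{\pi(\alpha_\infty^g(a))-\alpha_n^{f_n(g)}(b)}{\A_\infty}$ uniformly over $g\in G_\infty[r_n]$. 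The estimate rests on inserting the intermediate element $\pi(\alpha_\infty^{\xi_n(f_n(g))}(a))$:
\begin{equation*}
  \norm{\pi(\alpha_\infty^g(a))-\alpha_n^{f_n(g)}(b)}{\A_\infty}\leq\norm{\pi(\alpha_\infty^g(a))-\pi(\alpha_\infty^{\xi_n(f_n(g))}(a))}{\A_\infty}+\norm{\pi(\alpha_\infty^{\xi_n(f_n(g))}(a))-\alpha_n^{f_n(g)}(b)}{\A_\infty}\text.
\end{equation*}
The second summand is immediate: from the defining properties of the $\varepsilon_n$-iso-iso $(f_n,\xi_n)$ one has $f_n(g)\in G_n[r_n]$ whenever $g\in G_\infty[r_n]$, so this summand is at most $\sup_{h\in G_n[r_n]}\norm{\pi(\alpha_\infty^{\xi_n(h)}(a))-\alpha_n^h(b)}{\A_\infty}<\frac{\varepsilon}{2}$ by the standing assumption on $a,b$.

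The first summand is the real content. Since $\pi$ is a $\ast$-automorphism, hence norm-isometric, it equals $\norm{\alpha_\infty^g(a)-\alpha_\infty^{\xi_n(f_n(g))}(a)}{\A_\infty}$; a crude bound would require uniform continuity of $g\mapsto\alpha_\infty^g$ on $G_\infty[r_n]$, which fails because $r_n=\frac{1}{\varepsilon_n}\to\infty$. The way around is to use left-invariance of $\delta_\infty$. Put $g'\coloneqq\xi_n(f_n(g))$ and $k\coloneqq (g')^{-1}g$; since $\alpha_\infty^{g'}$ is a $\ast$-automorphism,
\begin{equation*}
  \norm{\alpha_\infty^g(a)-\alpha_\infty^{g'}(a)}{\A_\infty}=\norm{\alpha_\infty^{g'}\big(\alpha_\infty^k(a)-a\big)}{\A_\infty}=\norm{\alpha_\infty^k(a)-a}{\A_\infty}\leq\KantorovichDist{\Lip_\infty}(\alpha_\infty^k,\mathrm{id}_{\A_\infty})\text,
\end{equation*}
the last step using $\Lip_\infty(a)\leq 1$. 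Now $\delta_\infty(e_\infty,k)=\delta_\infty(g',g)=\delta_\infty(g,\xi_n(f_n(g)))$, and specializing the second group variable to the unit in Equation \eqref{rn-epsilon-iso-eq-1} (together with the unitality of $f_n,\xi_n$ and $f_n(g)\in G_n[r_n]$) gives $\delta_\infty(g,\xi_n(f_n(g)))<\varepsilon_n$. Thus $k$ sits in the \emph{small} ball $G_\infty[\varepsilon_n]$, and the equicontinuity clause of Hypothesis \eqref{cov-bridge-hyp} — applied to $G_\infty$ at the unit with error budget $\frac{\varepsilon}{4}$, the instance $n=\infty$ being subsumed in, or deducible from, that clause via strong continuity of $\alpha_\infty$ and properness of $G_\infty$ — yields $\KantorovichDist{\Lip_\infty}(\alpha_\infty^k,\mathrm{id}_{\A_\infty})\leq\frac{\varepsilon}{4}$ once $\varepsilon_n$ lies below both $\frac{4}{\varepsilon}$ and the modulus $\omega$ attached to $\frac{\varepsilon}{4}$ (so that $k,e_\infty\in G_\infty[\tfrac{4}{\varepsilon}]$ and $\delta_\infty(e_\infty,k)<\omega$). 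Crucially the resulting bound $\sup_{k\in G_\infty[\varepsilon_n]}\KantorovichDist{\Lip_\infty}(\alpha_\infty^k,\mathrm{id}_{\A_\infty})$ does not involve $g$.

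Assembling: choose $N$ so that $\varepsilon_n<\min\{\omega,\tfrac{4}{\varepsilon}\}$ for all $n\geq N$. Then for every $g\in G_\infty[r_n]$ the first summand is $\leq\frac{\varepsilon}{4}$ and the second is $<\frac{\varepsilon}{2}$, so $\norm{\pi(\alpha_\infty^g(a))-\alpha_n^{f_n(g)}(b)}{\A_\infty}<\tfrac{3}{4}\varepsilon$, and passing to the supremum over $g$ gives the desired strict bound $<\varepsilon$. I expect the one genuinely fiddly point to be the ball-radius bookkeeping: extracting from the definition of an $\varepsilon_n$-iso-iso that $f_n$ carries $G_\infty[r_n]$ into $G_n[r_n]$, so that the second summand and Equation \eqref{rn-epsilon-iso-eq-1} are both applicable at $h=f_n(g)$ (if only $G_n[r_n+\varepsilon_n]$ were available, one would shrink the radii slightly and note the conclusion for $G_\infty[r_n]$ still follows). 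The conceptual heart, by contrast, is the observation that left-invariance of $\delta_\infty$ together with the isometry property of the automorphisms $\alpha_\infty^{g'}$ turns the otherwise unbounded-ball estimate into an equicontinuity statement at the identity alone, so that $r_n\to\infty$ does no harm.
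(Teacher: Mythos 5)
Your proof is correct and follows the paper's argument in all essentials: the same triangle inequality through the intermediate term $\pi(\alpha_\infty^{\xi_n(f_n(g))}(a))$, the second summand controlled by the hypothesis at $h=f_n(g)$, and the first by the closeness of $g$ to $\xi_n(f_n(g))$ (via the $\varepsilon_n$-iso-iso property, which the paper invokes as \cite[Lemma 2.10]{Latremoliere18b}) combined with the equicontinuity clause of Hypothesis (\ref{cov-bridge-hyp}). The one place you go beyond the paper is the left-invariance reduction of the first summand to $\KantorovichDist{\Lip_\infty}(\alpha_\infty^k,\mathrm{id})$ with $k=\xi_n(f_n(g))^{-1}g$ near the unit; the paper simply applies the equicontinuity of $\alpha_\infty$ to the pair $(g,\xi_n(f_n(g)))$ directly, without worrying about the ball restriction $G_\infty\left[\frac{1}{\varepsilon}\right]$ appearing in the hypothesis, so your extra step is a harmless --- indeed slightly more careful --- refinement rather than a genuinely different route.
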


\begin{proof}
  Let $\varepsilon > 0$. By assumption on $\alpha_\infty$, there exists $\omega >0$ such that, for all $g,g' \in G_\infty$, if $\delta_\infty(g,g') < \omega$, then  $\KantorovichDist{\Lip_\infty}(\alpha_\infty^g,\alpha_\infty^{g'}) < \frac{\varepsilon}{2}$. On the other hand, there exists $N\in\N$ such that, if $n\geq N$, then $(f_n,\xi_n)$ is a $\frac{\varepsilon}{2}$-iso-iso. Therefore, in particular, by \cite[Lemma 2.10]{Latremoliere18b}, we have $\delta_\infty(g,\xi_n(f_n(g))) < \omega$ for all $g \in G_\infty[r_n]$. Therefore:
  \begin{align*}
    \norm{\pi(\alpha_\infty^g(a)) - \alpha_n^{f_n(g)}(b)}{\A_\infty}
    &\leq \norm{\pi(\alpha_\infty^g(a) - \alpha_\infty^{\xi_n(f_n(g))}(b))}{\A_\infty} \\
    &\quad + \norm{\pi(\alpha_\infty^{\xi_n(f_n(g))}(a)) - \alpha_n^{f_n(g)}(b)}{\A_\infty} \\
    &\leq \frac{\varepsilon}{2} + \frac{\varepsilon}{2} = \varepsilon \text.
  \end{align*}
  This completes our proof.
\end{proof}

Now, the existence of covariant bridge builders implies convergence for the covariant propinquity.

\begin{theorem}\label{covariant-bridge-builder-thm}
  For each $n\in\Nbar \coloneqq \N\cup\{\infty\}$, let $(\A_n,\Lip_n)$ be a {\qcms}, such that $\A_\infty = \closure{\bigcup_{n\in\N} \A_n}$, where $(\A_n)_{n\in\N}$ is an increasing (for $\subseteq$) sequence of C*-subalgebras of $\A_\infty$, with the unit of $\A_\infty$ in $\A_0$.
Assume that 	
 $(\A_n,\Lip_n)_{n\in\Nbar}$ are  {\qcms s},  and for each $n\in\Nbar$, let $\alpha_n$ be an action of a group $G_n$ on $(\A_n,\Lip_n)$ by full quantum isometries, satisfying Hypothesis (\ref{cov-bridge-hyp}).
  
  If there exists a covariant bridge builder for this data, then
  \begin{equation*}
    \lim_{n\rightarrow\infty} \covpropinquity{}((\A_n,\Lip_n,\alpha_n,G_n),(\A_\infty,\Lip_\infty,\alpha_\infty,G_\infty)) = 0 \text.
  \end{equation*}
\end{theorem}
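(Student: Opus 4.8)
The plan is to build, for every sufficiently large $n$, an explicit target {\qcms} $(\D_n,\Lip_{\D_n})$ together with quantum isometries $\pi_n : \D_n \to \A_\infty$ and $\psi_n : \D_n \to \A_n$ realizing a small value of the covariant propinquity. The natural choice, following the proof of \cite[Theorem 2.22]{FaLaPa}, is to take $\D_n \coloneqq \A_\infty \oplus \A_n$ (as a C*-algebra) and to define a Lipschitz seminorm on it of the form
\begin{equation*}
  \Lip_{\D_n}(a,b) \coloneqq \max\left\{ \Lip_\infty(a), \Lip_n(b), \frac{1}{\varepsilon_n'}\norm{\pi(a)-b}{\A_\infty} \right\}
\end{equation*}
for an appropriate vanishing sequence $\varepsilon_n'$ derived from the covariant bridge builder $\pi$, with $\psi_n$ and $\pi_n$ the two coordinate projections (precomposed with $\pi$ on the $\A_\infty$-side, to match the bridge builder convention). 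One checks, exactly as in \cite{FaLaPa}, that $\Lip_{\D_n}$ is a $(\Omega,\Omega')$-quasi-Leibniz lower semicontinuous Lip-norm and that the coordinate maps are quantum isometries; the bridge-builder inequalities \eqref{bridge-builder-eq-1}--\eqref{bridge-builder-eq-2} (which are implied by Definition \eqref{cov-bridge-def} by taking $g = e_{G_\infty}$) give that the Hausdorff distances between $\StateSpace(\D_n)$ and the images of the two state spaces go to zero, i.e. conditions (1) in the list preceding Notation \eqref{not:ep-iso-iso}.

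**The covariant conditions.** The genuinely new content is condition (3), the covariance estimate on state spaces. Here I would use the maps $(f_n,\xi_n)$ furnished by Hypothesis \eqref{cov-bridge-hyp}, which already satisfy \eqref{rn-epsilon-iso-eq-1}--\eqref{rn-epsilon-iso-eq-2}, so condition (2) is automatic with $r_n = 1/\varepsilon_n \to \infty$. For condition (3), fix $\varphi \in \StateSpace(\A_\infty)$; I want $\mu \in \StateSpace(\A_n)$ with $\sup_{g\in G_\infty[r_n]} \Kantorovich{\Lip_{\D_n}}(\varphi\circ\alpha_\infty^g\circ\pi_n, \mu\circ\alpha_n^{f_n(g)}\circ\psi_n)$ small. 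The right choice of $\mu$ should come from pushing $\varphi$ through the bridge builder and projecting to $\A_n$ via the approximation afforded by Definition \eqref{cov-bridge-def}(1): for a test element $(a,b)\in\dom{\Lip_{\D_n}}$ with $\Lip_{\D_n}(a,b)\le 1$, one has $\Lip_\infty(a)\le 1$ and $\Lip_n(b)\le 1$ and $\norm{\pi(a)-b}{\A_\infty}<\varepsilon_n'$, while Definition \eqref{cov-bridge-def}(1) (applied to $a$) produces $b'\in\dom{\Lip_n}$ with $\Lip_n(b')\le 1$ and $\sup_{g\in G_\infty[r_n]}\norm{\pi(\alpha_\infty^g(a))-\alpha_n^{f_n(g)}(b')}{\A_\infty}<\varepsilon\Lip_\infty(a)$. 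Combining these two approximations (and using that $b$ and $b'$ are close: $\norm{b-b'}{\A_\infty}$ is controlled because both $\pi(a)$ and $\alpha$-images are within $\varepsilon$-order quantities, together with the hypothesis that $\alpha_n^g$ varies continuously in $g$), one estimates $|\varphi(\alpha_\infty^g(\pi_n(a,b))) - \mu(\alpha_n^{f_n(g)}(\psi_n(a,b)))|$ uniformly over $g\in G_\infty[r_n]$ by an expression tending to $0$. The reverse direction (starting from $\nu\in\StateSpace(\A_n)$) is symmetric, now invoking Definition \eqref{cov-bridge-def}(2) and Lemma \eqref{cov-bridge-lemma} to convert a bound over $G_n[r_n]$ into the bound over $G_\infty[r_n]$ that the covariant propinquity demands.

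**Main obstacle.** I expect the technical heart to be the interplay of three sources of error in the covariant estimate: (i) the bridge-builder slack $\varepsilon$ in Definition \eqref{cov-bridge-def}; (ii) the fact that $f_n$ and $\xi_n$ are only $\varepsilon_n$-iso-isos, so $\xi_n\circ f_n$ differs from the identity on $G_\infty[r_n]$ — this is exactly where Lemma \eqref{cov-bridge-lemma} and \cite[Lemma 2.10]{Latremoliere18b} enter, using the local equicontinuity clause of Hypothesis \eqref{cov-bridge-hyp} to absorb $\delta_\infty(g,\xi_n(f_n(g)))$ into a Monge-Kantorovich bound on $\alpha_\infty$; and (iii) propagating an approximation $\norm{\pi(\alpha_\infty^g(a))-\alpha_n^{f_n(g)}(b)}{\A_\infty}<\varepsilon$, valid for the specific $b$ attached to $a$, to the element $b$ occurring in the chosen test pair $(a,b)\in\D_n$, which requires bounding $\norm{b-b'}{\A_\infty}$ and then applying the (norm-isometric) automorphism $\alpha_n^{f_n(g)}$. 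Carefully tracking that all three errors can be made smaller than a prescribed $\varepsilon$ by choosing $n$ large — and that the resulting $\D_n$ genuinely satisfies the quasi-Leibniz bound with the \emph{fixed} constants $(\Omega,\Omega')$ of the standing convention — is the crux; once the estimates close, the displayed limit follows immediately from the definition of $\covpropinquity{}$ recalled before Notation \eqref{not:ep-iso-iso}.
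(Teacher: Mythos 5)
Your proposal is correct and follows the same architecture as the paper's proof: the pivot space $\D_n=\A_\infty\oplus\A_n$ with a max-type seminorm, the two coordinate projections as quantum isometries, the states $\varphi\circ\pi^{-1}$ and (Hahn--Banach extension)$\circ\pi$ for the covariance estimates, and Lemma (\ref{cov-bridge-lemma}) together with the equicontinuity clause of Hypothesis (\ref{cov-bridge-hyp}) to absorb the discrepancy between $\xi_n\circ f_n$ and the identity. The one place you diverge is the seminorm on $\D_n$: the paper takes
\begin{equation*}
\TLip_n(a,b)\coloneqq\max\left\{\Lip_\infty(a),\Lip_n(b),\frac{1}{2\varepsilon}\sup_{g\in G_\infty[\varepsilon^{-1}]}\norm{\pi\circ\alpha_\infty^g(a)-\alpha_n^{f_n(g)}(b)}{\A_\infty}\right\},
\end{equation*}
i.e.\ it bakes the covariant supremum into the unit ball, so that $\TLip_n(a,b)\leq 1$ immediately yields the uniform-in-$g$ estimate needed for condition (3); the covariant bridge builder is then what guarantees the coordinate maps are still quantum isometries. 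Your version keeps only the single term $\frac{1}{\varepsilon_n'}\norm{\pi(a)-b}{\A_\infty}$ and recovers the uniform estimate afterwards by producing the auxiliary $b'$ from Definition (\ref{cov-bridge-def})(1), noting $\norm{b-b'}{\A_\infty}\leq\norm{b-\pi(a)}{\A_\infty}+\norm{\pi(a)-b'}{\A_\infty}$ (the second term being the $g=e_{G_\infty}$ instance of the covariant bound), and pushing this through the norm-isometric automorphism $\alpha_n^{f_n(g)}$; this costs only a factor-of-two degradation of the error and so closes. Both routes work; the paper's choice makes the covariance estimate tautological at the price of a slightly heavier seminorm, while yours keeps the seminorm of \cite{FaLaPa} verbatim at the price of the extra propagation step you correctly identify as the technical crux.
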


\begin{proof}
  Let $\pi$ be a covariant bridge builder and let $\varepsilon > 0$. By assumption, there exists $\delta>0$ and $N_0\in\N$ such that, for all $n\in\Nbar$ with $n\geq N_0$, and for all $g,g' \in G_\infty\left[\frac{1}{\varepsilon}\right]$, if $\delta_n(g,g')<\delta$, then $\KantorovichDist{\Lip_n}(\alpha_n^g,\alpha_n^h) < \frac{\varepsilon}{2}$.
  
  Since  $\lim_{n\rightarrow\infty}r_n = \infty$, there exists $N_1\in\N$ such that, if $n\geq N_1$, then $r_n > \max\{\frac{1}{\delta},\frac{1}{\varepsilon}+\varepsilon\}$, and
  \begin{equation*}
    \sup_{g,g' \in G_n[r_n]}\sup_{h \in G_\infty[r_n]} \left|\delta_\infty(\xi_n(g)\xi_n(g'),h)-\delta_n(g g', f_n(h))\right|< \min\left\{\delta,\varepsilon\right\}\text,
  \end{equation*}
  and
  \begin{equation*}
    \sup_{g \in G_\infty[r_n]}\sup_{h,h' \in G_n[r_n]} \left|\delta_\infty(g,\xi_n(h')\xi_n(h))-\delta_n(f_n(g), h h')\right|< \min\left\{\delta,\varepsilon\right\}\text.
  \end{equation*}
  
  By \cite[Lemma 2.10]{Latremoliere18b}, we have $\delta_\infty(\xi_n(f_n(g)),g) < \delta$ for all $g \in G_\infty\left[\frac{1}{\varepsilon}\right]$  Therefore, for all $a\in\dom{\Lip_\infty}$ with $\Lip_\infty(a)\leq 1$,
  \begin{equation}\label{bridge-builder-eq-0}
    \norm{\alpha_\infty^g(a) - \alpha_{\infty}^{\xi_n(f_n(g))}(a)}{\A_\infty} \leq \KantorovichDist{\Lip_\infty}(\alpha_\infty^g,\alpha_\infty^{\xi_n(f_n(g))}) < \frac{\varepsilon}{2} \text,
  \end{equation}
 Moreover, since  $\delta_n(f_n(\xi_n(g)),g) < \delta$ for all $g \in G_n\left[\frac{1}{\varepsilon}\right]$, then for all $a\in\dom{\Lip_n}$ with $\Lip_n(a)\leq 1$,
  \begin{equation}\label{bridge-builder-eq-00}
    \norm{\alpha_n^g(a) - \alpha_n^{f_n(\xi_n(g))}(a)}{\A_\infty} \leq \KantorovichDist{\Lip_n}(\alpha_n^g,\alpha_n^{f_n(\xi_n(g))}) < \frac{\varepsilon}{2} \text.
  \end{equation}
  
  Moreover, by definition of covariant bridge builders, there exists $N_2 \in \N$ such that, if $n\geq N_2$, then:
  \begin{enumerate}
  \item for every $a\in\dom{\Lip_\infty}$, there exists $b \in \dom{\Lip_n}$ with $\Lip_n(b)\leq\Lip_\infty(a)$ and 
    \begin{equation}\label{bridge-builder-eq-1}
      \sup_{g \in G_\infty[r_n]} \norm{\pi(\alpha_\infty^g(a))-\alpha_n^{f_n(g)}(b)}{\A_\infty}  < \frac{\varepsilon}{2} \Lip_\infty(a)\text,
    \end{equation}
  \item for every $b\in\dom{\Lip_n}$, there exists $a \in \dom{\Lip_\infty}$ with $\Lip_\infty(a)\leq\Lip_n(b)$ and 
    \begin{equation}\label{bridge-builder-eq-2}
      \sup_{g \in G_n[r_n]} \norm{\pi(\alpha_\infty^{\xi_n(g)}(a))-\alpha_n^{g}(b)}{\A_\infty}  < \frac{\varepsilon}{2} \Lip_n(b) \text.
    \end{equation}
  \end{enumerate}	
  
  Fix $n\geq N\coloneqq\max\{N_0,N_1,N_2\}$. Let $\D_n \coloneqq \A_\infty \oplus \A_n$, and for all $a\in\dom{\Lip_\infty}$ and $b \in \dom{\Lip_n}$, set
  \begin{equation*}
    \TLip_n(a,b) \coloneqq \max\left\{ \Lip_\infty(a), \Lip_n(b), \frac{1}{ 2\, \varepsilon}\sup_{g \in G_\infty[\varepsilon^{-1}]}\norm{\pi\circ\alpha_\infty^{g}(a)-\alpha_n^{f_n(g)}(b)}{\A_\infty} \right\} \text.
  \end{equation*}
  Last, we set $p_n : (a,b)\in\D_n\mapsto a\in \A_\infty$ and $q_n:(a,b)\in\D_n\mapsto b \in \A_n$.

  We now follow the argument in \cite[Proposition 2.21]{FaLaPa}. First, we see that $p_n$ and $q_n$ are quantum isometries. By Expression (\ref{bridge-builder-eq-1}), if $a\in\dom{\Lip_\infty}$ with $\Lip_\infty(a) = 1$, then there exists $b\in\dom{\Lip_n}$ with $\Lip_n(b)\leq 1$ and $\sup_{g\in G_\infty[r_n]}\norm{\pi\circ\alpha_\infty^g(a)-\alpha_n^{f_n(g)}(b)}{\A_\infty}<\frac{\varepsilon}{2}$. Therefore, $\TLip_n(a,b) = 1 = \Lip_\infty(a)$. So $p_n$ is a quantum isometry.
  
  On the other hand, by Expression (\ref{bridge-builder-eq-2}), if $b \in \dom{\Lip_n}$ with $\Lip_n(b)=1$, then there exists $a\in \dom{\Lip_\infty}$ with $\Lip_\infty(a)\leq 1$ and \[\sup_{g\in G_n[r_n]}\norm{\pi(\alpha_\infty^{\xi_n(g)}(a)) - \alpha_n^{g}(b)}{\A_\infty} < \frac{\varepsilon}{2}.\] Henceforth, by Lemma (\ref{cov-bridge-lemma}), we obtain:
  \begin{equation*}
   \sup_{g\in G_\infty[r_n]} \norm{\pi(\alpha_\infty^{g}(a)) - \alpha_n^{f_n(g)}(b)}{\A_\infty} <  \varepsilon \text.
  \end{equation*}

  Therefore, $\TLip_n(a,b) = 1 = \Lip_n(b)$, so the $\ast$-morphism $q_n$ is a quantum isometry from $(\D_n,\TLip_n)$ to $(\A_\infty,\Lip_\infty)$.
  
  We refer to \cite[Theorem 2.22]{FaLaPa} for the proof that $\Haus{\TLip_n}(\StateSpace(\D_n),p_n^\ast(\StateSpace(\A_n))) \leq \frac{\varepsilon}{2}$ and $\Haus{\TLip_n}(\StateSpace(\D_n),q_n^\ast(\StateSpace(\A_n))) \leq \frac{\varepsilon}{2}$, which follows from the fact that if $(a,b)\in\dom{\TLip_n}$ with $\TLip_n(a,b)\leq 1$ then in particular, $\norm{\pi(a)-b}{\A_\infty} < \frac{\varepsilon}{2}$.
  
  \medskip
  
  Now fix $\varphi \in \StateSpace(\A_n)$ and let $g \in G_n[r_n]$. If $(a,b) \in \dom{\TLip_n}$ with $\TLip_n(a,b)\leq 1$, then $\norm{\pi\circ\alpha_\infty^g(a)-\alpha_n^{f_n(g)}(b)}{\A_\infty} < \varepsilon$. Consequently, if $\varphi'$ is a state of $\A_\infty$ given by extending $\varphi$ to $\A_\infty$ using the Hahn-Banach theorem, and if we set $\psi \coloneqq \varphi'\circ\pi$, then $\psi \in \StateSpace(\A_\infty)$, and
  \begin{align*}
    \left|\varphi\circ\alpha_n^{g}\circ q_n(a,b) - \psi\circ\alpha_\infty^{\xi_n(g)}\circ p_n(a,b)\right|
    &=\left|\varphi\circ\alpha_n^{g}(b) - \varphi'\circ\pi\circ\alpha_\infty^{\xi_n(g)}(a)\right| \\
    &=\left|\varphi'(\alpha_n^{g}(b) - \pi\circ\alpha_\infty^{\xi_n(g)}(a))\right| \\
    &\leq \norm{\alpha_n^{g}(b)-\pi(\alpha_\infty^{\xi_n(g)}(a))}{\A_\infty} \\
    &\leq \underbracket[1pt]{\norm{\alpha_n^g(b)-\alpha_n^{f_n(\xi_n(g))}}{\A_\infty}}_{<\frac{\varepsilon}{2}\text{ by Exp. (\ref{bridge-builder-eq-00})}} \\
    &\quad + \underbracket[1pt]{\norm{\alpha_n^{f_n(\xi_n(g))}(a) - \pi(\alpha_\infty^{\xi_n(g)}(a))}{\A_\infty}}_{<\frac{\varepsilon}{2}\text{ since }\TLip_n(a,b)\leq 1} \\
    &< \varepsilon \text.
  \end{align*}
  Thus, $\sup_{g \in G_n[r_n]}\Kantorovich{\TLip_n}(\varphi\circ\alpha^{g}\circ q_n,\psi\circ\alpha^{\xi_n(g)}\circ p_n) < \varepsilon$.
  
  Now fix  $\varphi \in \StateSpace(\A_\infty)$. Let $\psi = \varphi\circ\pi^{-1}$, restricted to $\A_n$: thus, $\psi \in \StateSpace(\A_n)$, and moreover, for $g\in  G_\infty[r_n]$:
  \begin{align*}
    \left|\varphi\circ\alpha_\infty^g\circ p_n(a,b) - \psi\circ\alpha_n^{f_n(g)}\circ q_n(a,b)\right|
    &=|\varphi(\alpha_\infty^g(a)-\pi^{-1}(\alpha_n^{f_n(g)}(b)))| \\
    &\leq \norm{\alpha_\infty^g(a) - \pi^{-1}(\alpha_n^{f_n(g)}(b))}{\A_\infty} \\
    &= \norm{\pi(\alpha_\infty^g(a)) - \alpha_n^{f_n(g)}(b)}{\A_\infty} \\
    &< \varepsilon \text.
  \end{align*}
  Hence $\sup_{g \in G_\infty[r_n]}\Kantorovich{\TLip_n}(\varphi\circ\alpha_\infty^g\circ p_n , \psi\circ\alpha_n^{f_n(g)}\circ q_n) < \varepsilon$.
  
  Thus our theorem is proven.
\end{proof}

\medskip

Covariant bridge builders are in fact necessary for convergence, under rather mild assumptions, so that they provide a rather satisfactory picture for covariant convergence of Lipschitz dynamical systems built on inductive sequences.

\begin{theorem}\label{covariant-bridge-builder-converse-thm}
  For each $n\in\Nbar \coloneqq \N\cup\{\infty\}$, let $(\A_n,\Lip_n)$ be a {\qcms}, such that $\A_\infty = \closure{\bigcup_{n\in\N} \A_n}$, where $(\A_n)_{n\in\N}$ is an increasing (for $\subseteq$) sequence of C*-subalgebras of $\A_\infty$, with the unit of $\A_\infty$ in $\A_0$. Moreover, for each $n\in\Nbar$, let $\alpha_n$ be an action of a proper metric group $(G_n,\delta_n)$ on $(\A_n,\Lip_n)$ by full quantum isometries, and for all $n\in\N$, let $(f_n,\xi_n)$ be a $\varepsilon_n$-iso-iso from $G_\infty$ to $G_n$ in the sense of Notation \eqref{not:ep-iso-iso}, with $\lim_{n\rightarrow\infty}\varepsilon_n = 0$. Assume in addition that  $G_n$ , $\alpha_n,$
  $f_n: G_\infty \to G_n $ and $\xi_n:G_n \to  G_\infty $  satisfy  Hypothesis (\ref{cov-bridge-hyp}).
  If
  \begin{equation*}
    \lim_{n\rightarrow\infty}\covpropinquity{}((\A_n,\Lip_n,G_n,\alpha_n),(\A_\infty,\Lip_\infty,G_\infty,\alpha_\infty)) = 0\text,
  \end{equation*}
  and if there exists $M > 0$ such that for all $n\in\N$
  \begin{equation*}
    \frac{1}{M}\Lip_n \leq \Lip_\infty \leq M \Lip_n \text{ on }\dom{\Lip_n},
  \end{equation*}
  then there exists  a covariant bridge builder associated to this data.
\end{theorem}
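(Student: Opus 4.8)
The plan is to adapt the proof of the converse implication of \cite[Theorem 2.22]{FaLaPa}, which manufactures a bridge builder out of propinquity convergence, while carrying along the data of the group actions so that the $\ast$-automorphism we produce is a \emph{covariant} bridge builder in the sense of Definition (\ref{cov-bridge-def}). (Note that $\covpropinquity{}$ dominates the ordinary propinquity $\dpropinquity{}$, so the present hypotheses in particular imply $\dpropinquity{}((\A_n,\Lip_n),(\A_\infty,\Lip_\infty)) \to 0$, which is why the bi-Lipschitz assumption $\frac{1}{M}\Lip_n \leq \Lip_\infty \leq M\Lip_n$ is the same one that \cite{FaLaPa} needs; merely invoking \cite[Theorem 2.22]{FaLaPa}, however, gives a bridge builder with no control over the actions, so the construction must be re-run with covariant data.) First I would unwind what $\lim_n \covpropinquity{}((\A_n,\Lip_n,G_n,\alpha_n),(\A_\infty,\Lip_\infty,G_\infty,\alpha_\infty)) = 0$ supplies: a sequence $\eta_n\to 0$ and, for each $n$, a {\qcms} $(\D_n,\Lip_{\D_n})$ with quantum isometries $\theta_n:(\D_n,\Lip_{\D_n})\to(\A_n,\Lip_n)$ and $\rho_n:(\D_n,\Lip_{\D_n})\to(\A_\infty,\Lip_\infty)$ whose associated Hausdorff distances are $<\eta_n$, an $\eta_n$-iso-iso $(f_n',\xi_n')$ from $G_\infty$ to $G_n$, and the state-level covariance estimate $\sup_{\varphi\in\StateSpace(\A_\infty)}\inf_{\mu\in\StateSpace(\A_n)}\sup_{g\in G_\infty[1/\eta_n]}\Kantorovich{\Lip_{\D_n}}(\varphi\circ\alpha_\infty^g\circ\rho_n,\mu\circ\alpha_n^{f_n'(g)}\circ\theta_n)<\eta_n$ together with its symmetric companion. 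Since two $\varepsilon$-iso-isos with vanishing error between a fixed pair of proper metric groups are uniformly close on each bounded ball (by the stability estimates of \cite[Section 2]{Latremoliere18b}), after passing to a subsequence and enlarging $\eta_n$ we may assume $(f_n',\xi_n')=(f_n,\xi_n)$ are the iso-isos fixed in Hypothesis (\ref{cov-bridge-hyp}) --- the discrepancy being absorbed by the equicontinuity clause of Hypothesis (\ref{cov-bridge-hyp}) in the covariant coupling estimates; this is harmless because Definition (\ref{cov-bridge-def}) only requires an eventual estimate.

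Next I would construct $\pi$ from these tunnels exactly as in \cite{FaLaPa}. The bi-Lipschitz comparison forces the unit $\Lip_n$-balls and the unit $\Lip_\infty$-ball to be comparable, so that sections of the quantum isometries $\theta_n$ and $\rho_n$ move norm- and $\Lip$-bounded elements to norm- and $\Lip$-bounded elements with distortion controlled solely by $M$. Using density of $\bigcup_n\A_n$ in $\A_\infty$ together with the compactness, in every {\qcms}, of the unit $\Lip$-ball modulo scalars (as exploited in the proof of Theorem (\ref{compact-Iso-thm})), one runs the diagonal argument of \cite{FaLaPa}: on each $\A_m$ the tunnels $(\D_n)_{n\geq m}$ furnish approximate $\ast$-isomorphisms $\A_m\to\A_\infty$ whose restrictions stabilize as $n\to\infty$, one extracts a pointwise-convergent subsequence, and the pointwise limit extends by continuity to a $\ast$-automorphism $\pi$ of $\A_\infty$ satisfying the bridge builder property of Definition (\ref{bridge-builder-def}).

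The one genuinely new ingredient is to keep track, at each finite stage, of how these approximate $\ast$-isomorphisms intertwine $\alpha_n^{f_n(g)}$ with $\alpha_\infty^g$. Given $\varepsilon>0$ and $n$ large, for $a\in\dom{\Lip_\infty}$ one chooses the lift $b\in\dom{\Lip_n}$ with $\Lip_n(b)\leq\Lip_\infty(a)$ coming out of the tunnel; the state-level covariance estimate above, combined with the thinness of the tunnel (the Hausdorff bounds let every state of $\D_n$ be approximated both by a state pulled back from $\A_\infty$ and by one pulled back from $\A_n$), converts --- via $\norm{x}{\A}=\sup\{|\varphi(x)|:\varphi\in\StateSpace(\A)\}$ for self-adjoint $x$ --- into the norm estimate $\sup_{g\in G_\infty[r_n]}\norm{\pi(\alpha_\infty^g(a))-\alpha_n^{f_n(g)}(b)}{\A_\infty}<\varepsilon\,\Lip_\infty(a)$, which is condition (1) of Definition (\ref{cov-bridge-def}); the case $g=e$ recovers the plain bridge builder inequality, so consistency with the previous paragraph is automatic. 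For condition (2) one first derives the $\xi_n$-version from the companion state-level estimate, and then trades it for the stated $f_n$-version by the argument of Lemma (\ref{cov-bridge-lemma}), again invoking the equicontinuity clause of Hypothesis (\ref{cov-bridge-hyp}).

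The main obstacle I expect is precisely the uniformity in $g$: the approximation coming from the $n$-th tunnel controls the action only over the ball $G_\infty[r_n]$, and these balls exhaust $G_\infty$ as $n\to\infty$, so no single modulus of continuity valid on all of $G_\infty$ is available. This is exactly what Hypothesis (\ref{cov-bridge-hyp}) circumvents: it furnishes, for each finite scale, an $\omega>0$ with $\KantorovichDist{\Lip_n}(\alpha_n^g,\alpha_n^h)<\varepsilon$ whenever $g,h\in G_n[1/\varepsilon]$ and $\delta_n(g,h)<\omega$, uniformly for large $n$, which is the input needed to parlay the $\varepsilon$-iso-isos --- which by nature respect multiplication and distances only on bounded balls --- into honest covariance in the limit, just as Lemma (\ref{cov-bridge-lemma}) does in miniature. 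A secondary, routine difficulty is arranging that the lifts $b$ selected at successive stages, and their group-translates $\alpha_n^{f_n(g)}(b)$, remain mutually compatible so that the limit is a single automorphism rather than a family of partial maps; this is handled by the same compactness-and-diagonalization bookkeeping as in \cite{FaLaPa}, the bi-Lipschitz bound guaranteeing that no lift escapes to infinity in the $\Lip$-seminorm.
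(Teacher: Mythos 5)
Your proposal is correct and follows essentially the same route as the paper's proof: extract covariant tunnels from the convergence hypothesis, obtain the bridge builder $\pi$ and its target-set characterization from \cite[Theorem 2.22]{FaLaPa}, convert the covariant reach (state-level) estimates into norm estimates via $\norm{x}{\A_\infty}=\sup\{|\varphi(x)| : \varphi\in\StateSpace(\A_\infty)\}$ for self-adjoint $x$, and pass between the $\xi_n$- and $f_n$-formulations using Lemma (\ref{cov-bridge-lemma}) together with the equicontinuity clause of Hypothesis (\ref{cov-bridge-hyp}). The only cosmetic difference is that the paper simply cites the target sets $\targetsettunnel{n}{a}{\Lip_\infty(a)}$ and $\targetsettunnel{n}{\alpha_\infty^g(a)}{\Lip_\infty(a)}$ (using that $\alpha_\infty^g$ is a full quantum isometry) rather than re-running the diagonal construction, but this does not change the substance of the argument.
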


\begin{proof}
  Since $(\A_n,\Lip_n,G_n,\alpha_n)_{n\in\N}$ converges for the covariant propinquity to $(\A_\infty,\Lip_\infty,\allowbreak G_\infty,\alpha_\infty)$, there exists a sequence $(\varepsilon_n)_{n\in\N}$ in $(0,\infty)$ and a sequence $(\tau_n,f_n,\xi_n)_{n\in\N}$ where, for each $n\in\N$:
  \begin{itemize}
  \item $\lim_{n\rightarrow\infty}\varepsilon_n = 0$,
  \item $(f_n,\xi_n)$ is a $\varepsilon_n$-iso-iso,
  \item $\tau_n\coloneqq (\D_n,\TLip_n,\Theta_n,\theta_n)$ is a tunnel \cite{Latremoliere13b} from $(\A_\infty,\Lip_\infty)$ to $(\A_n,\Lip_n)$ with magnitude, with respect to $(f_n,\xi_n)$, no more than $\varepsilon_n$ in the sense of \cite[Definiton 3.11]{Latremoliere18b}; we now recall the meaning of these terms for our purpose. The pair $(\D_n,\TLip_n)$ is a {\qcms}, and there are two quantum isometries $\Theta_n:(\D_n,\TLip_n)\rightarrow(\A_\infty,\Lip_\infty)$ and $\theta_n : (\D_n,\TLip_n)\rightarrow(\A_n,\Lip_n)$, such that:
    \begin{equation}\label{eq:reach1}
      \sup_{\varphi\in\StateSpace(\A_\infty)}\inf_{\psi\in\StateSpace(\A_n)}\sup_{g\in G_\infty[r_n]}\Kantorovich{\TLip_n}(\varphi\circ\alpha_\infty^{g}\circ\Theta_n,\psi\circ\alpha_n^{f_n(g)}\circ\theta_n) < \varepsilon_n \text,
    \end{equation}
    and
    \begin{equation}\label{eq:reach2}
      \sup_{\psi\in\StateSpace(\A_n)}\inf_{\varphi\in\StateSpace(\A_\infty)}\sup_{g\in G_n[r_n]}\Kantorovich{\TLip_n}(\varphi\circ\alpha_\infty^{\xi_n(g)}\circ\Theta_n,\psi\circ\alpha_n^{g}\circ\theta_n) < \varepsilon_n \text.
    \end{equation}
  \end{itemize}
  
   We note that by  \cite[Definition 3.10]{Latremoliere18b}, the covariant reach of a tunnel $\tau_n$ is the largest  of the two suprema in Equations \eqref{eq:reach1} and \eqref{eq:reach2}.
   
  By \cite[Theorem 2.22]{FaLaPa}, convergence for the propinquity of our inductive sequence implies the existence of a bridge builder. Specifically, up to replacing $(\A_n,\Lip_n)_{n\in\N}$ by a subsequence, for all $a \in \dom{\Lip_\infty}$, we recall that $\{\pi(a)\}$ is the limit of $(\targetsettunnel{n}{a}{l})_{n\in\N}$ for $l\geq\Lip_\infty(a)$, in the sense of the Hausdorff distance induced on closed subsets of $\A_\infty$ by $\norm{\cdot}{\A_\infty}$, and where the target sets $\targetsettunnel{n}{a}{l}$ were defined in \cite{Latremoliere13} as:
  \begin{equation*}
    \targetsettunnel{n}{a}{l} \coloneqq \left\{ \Theta_n(d) : d \in \dom{\TLip_n}, \TLip_n(d)\leq l, \theta_n(d) = a \right\} \text.
  \end{equation*}

  Let $a\in\dom{\Lip_\infty}$ and let $\varepsilon > 0$. Then there exists  $N\in\N$ such that for all $n\geq N$, we have $\norm{\pi(a)-b}{\A_\infty} < \frac{\varepsilon}{6}$ for all $b \in \targetsettunnel{n}{a}{\Lip_\infty(a)}$.

  Let $g\in G_\infty[r_n]$ and let $\varphi \in \StateSpace(\A_\infty)$. Let $\varphi' \coloneqq \varphi_{|\A_n}$.  So  by Equations \eqref{eq:reach1} and \eqref{eq:reach2},
  there exists $\psi \in \StateSpace(\A_\infty)$ such that
  
   \begin{equation*}
  \sup_{g\in G_n[r_n]}\Kantorovich{\TLip_n}(\varphi\circ\alpha_\infty^{\xi_n(g)}\circ\Theta_n,\psi\circ\alpha_n^{g}\circ\theta_n) < \frac{\varepsilon}6 \text.
  \end{equation*}
  
  Let $a\in\dom{\Lip_\infty}$ with $\Lip_\infty(a)\leq 1$. Since $\Lip_\infty(a) =\Lip_\infty\circ\alpha_\infty^g(a)$, there exists $c_n \in \targetsettunnel{n}{\alpha_\infty^g(a)}{\Lip_\infty(a)}$. Let $b \in \targetsettunnel{n}{a}{\Lip_\infty(a)}$. We then compute, for $g \in G_n[r_n]$:
  \begin{align*}
    \left|\varphi(\pi(\alpha_\infty^{\xi_n(g)}(a)) - \alpha_n^{g}(b))\right|
    &\leq \left|\varphi(\pi(\alpha_\infty^{\xi_n(g)}(a)) - c_n)\right| + \underbracket[1pt]{|\varphi'(c_n-\alpha_n^{g}(b))|}_{c_n-\alpha_n^{g}(b) \in \A_n} \\
    &< \frac{\varepsilon}{6} + |\varphi'(c_n)-\psi(\alpha_\infty^{\xi_n(g)}(a))| + |\psi(\alpha_\infty^{\xi_n(g)}(a))-\varphi'(\alpha_n^{g}(b))| \\
    &< \frac{\varepsilon}{6} + \frac{\varepsilon}{6} + \frac{\varepsilon}{6} = \frac{\varepsilon}{2} \text.
  \end{align*}

  Since $\varphi \in \StateSpace(\A_\infty)$ was arbitrary, and since $\pi(\alpha_\infty^{\xi_n(g)}(a)) - \alpha_n^{g}(b) \in \sa{\A_\infty}$, we conclude that
  \begin{equation*}
    \sup_{g \in G_n[r_n]} \norm{\pi(\alpha_\infty^{\xi_n(g)}(a)) - \alpha_n^{g}(b)}{\A_\infty} < \frac{\varepsilon}{2} \text.
  \end{equation*}
  By Lemma (\ref{cov-bridge-lemma}), there exists $N\in\N$ such that, if $n\geq N$, then
  \begin{equation*}
    \sup_{g \in G_\infty[r_n]} \norm{\pi(\alpha_\infty^{g}(a)) - \alpha_n^{f_n(g)}(b)}{\A_\infty} < \varepsilon \text.
  \end{equation*}

  We now choose $b \in \dom{\Lip_n}$ with $\Lip_n(b) \leq 1$. Let $a\in\dom{\Lip_\infty}$ such that $b\in\targetsettunnel{n}{a}{\Lip_n(b)}$; by \cite[Theorem 2.22]{FaLaPa}, we have $\norm{\pi(a)-b}{\A_\infty} < \frac{\varepsilon}{3}$. So
  \begin{align*}
    \left|\varphi(\pi(\alpha_\infty^{\xi_n(g)}(a)) - \alpha_n^{g}(b))\right|
    &\leq \left|\varphi(\pi(\alpha_\infty^g(a)) - c_n)\right| + \underbracket[1pt]{|\varphi'(c_n-\alpha_n^{f_n(g)}(b)) |}_{c_n-\alpha_n^{f_n(g)}(b) \in \A_n} \\
    &< \frac{\varepsilon}{3} + |\varphi'(c_n)-\psi(\alpha_\infty^g(a))| + |\psi(\alpha_\infty^g(a))-\varphi'(\alpha_n^{f_n(g)}(b))| \\
    &< \frac{\varepsilon}{3} + \frac{\varepsilon}{3} + \frac{\varepsilon}{3} = \varepsilon \text.
  \end{align*}

  This completes our proof.
\end{proof}

\medskip

We now focus our attention to a situation which will fit many further examples. We narrow our Hypothesis (\ref{cov-bridge-hyp}) by requiring that our groups form a projective sequence, and  more specific form for our iso-isos.

\begin{hypothesis}\label{cov-bridge-proj-hyp}
For each $n\in\Nbar \coloneqq \N\cup\{\infty\}$, let $(\A_n,\Lip_n)$ be a {\qcms}, such that $\A_\infty = \closure{\bigcup_{n\in\N} \A_n}$, where $(\A_n)_{n\in\N}$ is an increasing (for $\subseteq$) sequence of C*-subalgebras of $\A_\infty$, with the unit of $\A_\infty$ in $\A_0$. 
  
  	Let
  \begin{equation*}
    G_0 \xleftarrow{j_0} G_1 \xleftarrow{j_1} G_2 \xleftarrow{j_2} G_3 \xleftarrow{j_3} \cdots
  \end{equation*}
  be a projective sequence of groups such that $j_m$ is surjective for all $m \in \N$, and let $G_\infty$ be its projective limit. Let $f_m : G_\infty \rightarrow G_m$ be the canonical surjective group morphism for all $m\in\N$, such that $j_{m-1} \circ f_m =  f_{m-1}$ for all $m\in\N\setminus\{0\}$. 
  
  Assume that, for each $n\in\Nbar$, the group $G_n$ is endowed with a proper left invariant metric $\delta_n$, and that we are given an action $\alpha_n$ of $G_n$ by full quantum isometries of $(\A_n,\Lip_n)$.

  Assume, moreover, that, for all $n\in\N$, there exists a unital map $\xi_n : G_n\rightarrow G_\infty$ and $r_n > 0$ such that  $\lim_{n\rightarrow\infty}r_n =\infty$ with
  \begin{equation*}
    \lim_{n\rightarrow\infty} \sup_{g,g' \in G_n[r_n]}\sup_{h \in G_\infty[r_n]} \left|\delta_\infty(\xi_n(g)\xi_n(g'),h)-\delta_n(g g', f_n(h))\right| = 0 \text.
  \end{equation*}
\end{hypothesis}

We begin with the observation that Hypothesis (\ref{cov-bridge-proj-hyp}) indeed provides $\varepsilon$-iso-iso maps, and thus include the convergence of the sequence of groups $(G_n)_{n\in\N}$ in the sense of the Gromov-Hausdorff distance for proper monoids.

\begin{lemma}\label{upsilon-lemma}
  Let $G$ and $H$ be two proper metric groups with respective units $e_G$ and $e_H$, and respective (left invariant) metrics $\delta_G$ and $\delta_H$. Assume that there exists a surjective group morphism $f : G\rightarrow H$. If $\xi : H \rightarrow G$ is a map with $\xi(e_H)=e_G$, and if there exist $r>0$ and $\varepsilon > 0$ with the property that:
  \begin{equation*}
    \sup_{h,h' \in H\left[r\right]} \sup_{g \in G\left[r\right]} \left|\delta_G(\xi(h)\xi(h'),g) - \delta_H(h h', f(g))\right| < \varepsilon \text,
  \end{equation*}
  then $(f,\xi)$ is a $\frac{r}{2}$-local $\varepsilon$-almost isometric isomorphism from $G$ to $H$, in the sense of Equations \eqref{rn-epsilon-iso-eq-1} and \eqref{rn-epsilon-iso-eq-2}.
\end{lemma}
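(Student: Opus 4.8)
The plan is to verify Equations \eqref{rn-epsilon-iso-eq-1} and \eqref{rn-epsilon-iso-eq-2} --- with $r_n$ taken to be $\frac{r}{2}$ --- directly from the single displayed hypothesis, under the natural identification in which $G$ plays the role of the source $G_\infty$ of $f$ and $H$ the role of the target $G_n$, so that $\delta_G$ corresponds to $\delta_\infty$ and $\delta_H$ to $\delta_n$. With this dictionary, the hypothesis of the lemma is exactly Equation \eqref{rn-epsilon-iso-eq-2} written with the larger radius $r$ in place of $\frac{r}{2}$, after renaming the dummy variables $(h,h',g)$ as $(g,g',h)$. Since $H\left[\frac{r}{2}\right]\subseteq H[r]$ and $G\left[\frac{r}{2}\right]\subseteq G[r]$, restricting both suprema to the smaller balls yields \eqref{rn-epsilon-iso-eq-2} at once. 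So the real content of the lemma is \eqref{rn-epsilon-iso-eq-1}.

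For that, the key move is to specialize the hypothesis to $h' = e_H$ --- legitimate since $e_H\in H[r]$ because $r > 0$ --- and to invoke the unitality $\xi(e_H) = e_G$. This collapses the ``$\xi$ of a product versus a product'' shape into the one-sided comparison
\begin{equation*}
  \sup_{h \in H[r]}\ \sup_{g \in G[r]}\ \left|\delta_G(\xi(h),g) - \delta_H(h,f(g))\right| < \varepsilon \text,
\end{equation*}
whose shape already matches \eqref{rn-epsilon-iso-eq-1}.

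Then I would fix $g,g'\in G\left[\frac{r}{2}\right]$ and $h\in H\left[\frac{r}{2}\right]$. Left invariance of $\delta_G$ gives $\delta_G(gg',g) = \delta_G(g',e_G)$, so the triangle inequality yields $\delta_G(gg',e_G)\leq r$, i.e. $gg'\in G[r]$; this is precisely where the factor $\frac{1}{2}$ in the radius is spent. Applying the one-sided comparison above to the pair $(h,gg')\in H[r]\times G[r]$, and using that $f$ is a group morphism so that $f(gg') = f(g)f(g')$, the symmetry of both metrics rewrites the resulting inequality as
\begin{equation*}
  \left|\delta_H(f(g)f(g'),h) - \delta_G(gg',\xi(h))\right| < \varepsilon \text,
\end{equation*}
which is \eqref{rn-epsilon-iso-eq-1} for $r_n = \frac{r}{2}$. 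Together with the remark on \eqref{rn-epsilon-iso-eq-2}, and noting that $f$ is unital because it is a morphism, this exhibits $(f,\xi)$ as a $\frac{r}{2}$-local $\varepsilon$-almost isometric isomorphism from $G$ to $H$.

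There is no serious obstacle: the argument is essentially a change of variables combined with the $h'=e_H$ specialization. The only points requiring care are bookkeeping --- tracking on which group each metric is evaluated, and in which order its two arguments are written --- and recognizing that the halving of the radius is genuinely forced, being exactly the condition that keeps the product $gg'$ inside the ball $G[r]$ on which the hypothesis is valid.
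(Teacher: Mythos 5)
Your proof is correct and follows essentially the same route as the paper's: observe that the hypothesis is Equation \eqref{rn-epsilon-iso-eq-2} at the larger radius $r$, then obtain \eqref{rn-epsilon-iso-eq-1} by specializing to $h'=e_H$ (using $\xi(e_H)=e_G$), noting via left invariance that $gg'\in G[r]$ whenever $g,g'\in G\left[\frac{r}{2}\right]$, and using that $f$ is a group morphism. The only difference is cosmetic bookkeeping; in fact your version correctly places $gg'$ in $G[r]$, where the paper's displayed computation has a small typo asserting $z=gg'\in G\left[\frac{r}{2}\right]$.
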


\begin{proof}
  We already assumed half of the statement, so we only need to check the following.
  
  Let $h \in H\left[\frac{r}{2}\right]$ and $g,g' \in G\left[\frac{r}{2}\right]$. Since $\delta_G$ is left-invariant, we then get
  \begin{equation*}
    \delta_G(g g',e_G) \leq \delta_G(g g',g) + \delta_G(g,e_G) = \delta_G(g',e_G) + \delta_G(g,e_G) \leq \frac{r}{2} + \frac{r}{2} = r \text.
  \end{equation*}
  Therefore, setting $z \coloneqq g g' \in G\left[\frac{r}{2}\right]$ for clarity, by assumption on $f$ and $\xi$, we see that
  \begin{align*}
    \left| \delta_G(g g',\xi(h)) - \delta_H(f(g)f(g'),h)\right|
    &=\left| \delta_G(g g',\xi(h)) - \delta_H(\underbracket[1pt]{f(g g')}_{f \text{group morphism}},h) \right| \\
    &= \left| \delta_G( z, \xi(h)\underbracket[1pt]{\xi(e_{H})}_{=e_{G}} ) - \delta_H(f (z), h e_{H}) \right| \\
    &< \varepsilon \text{ by assumption on $\xi$ .}
  \end{align*}

  This concludes our proof.
\end{proof}

\begin{corollary}\label{cor:Y-conv}
  If we assume Hypothesis (\ref{cov-bridge-proj-hyp}), then
  \begin{equation*}
    \lim_{n\rightarrow\infty} \Upsilon((G_n,\delta_n),(G_\infty,\delta_\infty)) = 0 \text,
  \end{equation*}
  where we denote by $\Upsilon$ the {\it Gromov-Hausdorff monoid distance}, see \cite[Definition 2.7]{Latremoliere18b}.
\end{corollary}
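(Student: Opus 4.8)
The plan is to derive this as a direct consequence of Lemma (\ref{upsilon-lemma}), which is tailored precisely to promote the one-sided estimate built into Hypothesis (\ref{cov-bridge-proj-hyp}) into a genuine $\varepsilon$-iso-iso, combined with the fact that the existence of an $\varepsilon$-iso-iso bounds $\Upsilon$ by $\varepsilon$ by the very definition of the Gromov-Hausdorff monoid distance \cite[Definition 2.7]{Latremoliere18b}. So the whole argument is a matter of bookkeeping with the two parameters (the local radius $r_n$ and the almost-isometry error), with no substantive obstacle; the only point that requires a little care is the passage between the non-strict supremum appearing in Hypothesis (\ref{cov-bridge-proj-hyp}) and the strict inequality demanded as the hypothesis of Lemma (\ref{upsilon-lemma}).

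First I would fix, for each $n\in\N$, the quantity
\[
s_n \coloneqq \sup_{g,g' \in G_n[r_n]}\ \sup_{h \in G_\infty[r_n]} \left|\delta_\infty(\xi_n(g)\xi_n(g'),h)-\delta_n(g g', f_n(h))\right| \text,
\]
and then set $\varepsilon_n \coloneqq s_n + \frac{1}{n+1}$, so that $\lim_{n\rightarrow\infty}\varepsilon_n = 0$ by Hypothesis (\ref{cov-bridge-proj-hyp}), while the supremum $s_n$ is \emph{strictly} less than $\varepsilon_n$ for every $n$. Recall that in Hypothesis (\ref{cov-bridge-proj-hyp}) the map $f_n : G_\infty\rightarrow G_n$ is the canonical surjective group morphism and $\xi_n$ is unital, i.e. $\xi_n(e_{G_n}) = e_{G_\infty}$, and both $\delta_n$ and $\delta_\infty$ are proper left invariant metrics. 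Thus Lemma (\ref{upsilon-lemma}), applied with $G = G_\infty$, $H = G_n$, $r = r_n$ and $\varepsilon = \varepsilon_n$, shows that $(f_n,\xi_n)$ is a $\frac{r_n}{2}$-local $\varepsilon_n$-almost isometric isomorphism, i.e. Equations \eqref{rn-epsilon-iso-eq-1} and \eqref{rn-epsilon-iso-eq-2} hold with $r_n$ replaced by $\frac{r_n}{2}$ and $\varepsilon$ replaced by $\varepsilon_n$.

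Next I would relax both parameters simultaneously. Put $\eta_n \coloneqq \max\left\{\varepsilon_n,\frac{2}{r_n}\right\}$; since $\varepsilon_n\rightarrow 0$ and $r_n\rightarrow\infty$ (by Hypothesis (\ref{cov-bridge-proj-hyp})), we have $\eta_n\rightarrow 0$. Because $\frac{1}{\eta_n}\leq\frac{r_n}{2}$ and $\varepsilon_n\leq\eta_n$, and because the left-hand sides of Equations \eqref{rn-epsilon-iso-eq-1} and \eqref{rn-epsilon-iso-eq-2} are monotone non-decreasing in the radius and the inequalities are monotone in the error, a $\frac{r_n}{2}$-local $\varepsilon_n$-almost isometric isomorphism is in particular a $\frac{1}{\eta_n}$-local $\eta_n$-almost isometric isomorphism; that is, $(f_n,\xi_n)$ is an $\eta_n$-iso-iso from $G_\infty$ to $G_n$ in the sense of Notation (\ref{not:ep-iso-iso}). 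By \cite[Definition 2.7]{Latremoliere18b}, the existence of an $\eta_n$-iso-iso between $(G_n,\delta_n)$ and $(G_\infty,\delta_\infty)$ gives $\Upsilon((G_n,\delta_n),(G_\infty,\delta_\infty))\leq\eta_n$, and letting $n\rightarrow\infty$ yields $\lim_{n\rightarrow\infty}\Upsilon((G_n,\delta_n),(G_\infty,\delta_\infty)) = 0$, as claimed.
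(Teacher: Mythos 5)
Your proof is correct and follows essentially the same route as the paper: both apply Lemma (\ref{upsilon-lemma}) to the estimate in Hypothesis (\ref{cov-bridge-proj-hyp}) to produce an $\varepsilon$-iso-iso and then invoke \cite[Definition 2.7]{Latremoliere18b} to bound $\Upsilon$. Your explicit handling of the strict-versus-non-strict inequality and of the radius condition via $\eta_n \coloneqq \max\{\varepsilon_n, 2/r_n\}$ is just a more fastidious version of the paper's choice of $N$ with $r_n \geq \frac{2}{\varepsilon}+\varepsilon$.
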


\begin{proof}
  Let $\varepsilon > 0$. Then there exists $N \in \N$ such that if $n\geq N$, then $r_n \geq \frac{2}{\varepsilon} + \varepsilon$, and
  \begin{equation*}
    \sup_{g,g' \in G_n[r_n]}\sup_{h \in G_\infty[r_n]} \left|\delta_\infty(\xi_n(g)\xi_n(g'),h)-\delta_n(g g', f_n(h))\right| < \varepsilon \text.
  \end{equation*}
  Thus, by Lemma (\ref{upsilon-lemma}), $(f_n,\xi_n)$ is a $\frac{1}{\varepsilon}$-local $\varepsilon$-almost isometric isomorphism. So by \cite[Definition 2.7]{Latremoliere18b}, we conclude $\Upsilon((G_n,\delta_n),(G_\infty,\delta_\infty)) < \varepsilon$. This concludes the proof of Corollary \eqref{cor:Y-conv}.
\end{proof}

\begin{corollary}\label{bridge-builder-commute-cor}
  Under the same assumptions as in Theorem (\ref{covariant-bridge-builder-thm}), if there exists a bridge builder $\pi$ such that
  \begin{enumerate}
  \item for each $n\in\Nbar$, for all $g \in G_n$, the $\ast$-automorphism $\alpha_n^g$ commute with $\pi$,
  \item for all $g \in G_\infty$, and for all $n\in\N$, the restriction of the $\ast$-automorphism  $\alpha_\infty^{g}$ to $\A_n$ equals $\alpha_n^{f_n(g)}$,
  \end{enumerate}
  then
  \begin{equation*}
    \lim_{n\rightarrow\infty} \covpropinquity{}((\A_n,\Lip_n,\alpha_n,G_n),(\A_\infty,\Lip_\infty,\alpha_\infty,G_\infty)) = 0 \text.
  \end{equation*}  
\end{corollary}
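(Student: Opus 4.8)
The plan is to show that the given bridge builder $\pi$ is in fact a \emph{covariant} bridge builder for the data at hand, in the sense of Definition (\ref{cov-bridge-def}); the conclusion will then be exactly Theorem (\ref{covariant-bridge-builder-thm}). Fix $\varepsilon > 0$. Since $\pi$ is a bridge builder in the sense of Definition (\ref{bridge-builder-def}), there is $N_1 \in \N$ so that, for all $n \geq N_1$, both defining approximation properties of $\pi$ hold with $\frac{\varepsilon}{2}$ in place of $\varepsilon$. By the equicontinuity clause of Hypothesis (\ref{cov-bridge-hyp}) applied with $\frac{\varepsilon}{2}$, there are $\omega \in \left(0,\frac{2}{\varepsilon}\right]$ and $N_2 \in \N$ so that, for all $n \geq N_2$, every $h \in G_n\left[\frac{2}{\varepsilon}\right]$ with $\delta_n(h,e_{G_n}) < \omega$ satisfies $\KantorovichLength{\Lip_n}(\alpha_n^{h}) < \frac{\varepsilon}{2}$; and, since $\varepsilon_n \to 0$ while $r_n = \frac{1}{\varepsilon_n}$, \cite[Lemma 2.10]{Latremoliere18b} applied to the $\varepsilon_n$-iso-isos $(f_n,\xi_n)$ lets us enlarge $N_2$ so that also $\delta_n\bigl(f_n(\xi_n(g)),g\bigr) < \omega$ for all $n \geq N_2$ and all $g \in G_n[r_n]$. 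Put $N \coloneqq \max\{N_1,N_2\}$ and fix $n \geq N$.

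The first clause of Definition (\ref{cov-bridge-def}) follows at once from the two standing hypotheses on $\pi$: namely that each $\alpha_\infty^{g}$ commutes with $\pi$, and that the restriction of $\alpha_\infty^{g}$ to $\A_n$ is $\alpha_n^{f_n(g)}$. Given $a \in \dom{\Lip_\infty}$, choose $b \in \dom{\Lip_n}$ with $\Lip_n(b)\leq\Lip_\infty(a)$ and $\norm{\pi(a)-b}{\A_\infty} < \frac{\varepsilon}{2}\Lip_\infty(a)$ from Definition (\ref{bridge-builder-def}). As $b \in \A_n$ and $\alpha_\infty^{g}$ preserves $\norm{\cdot}{\A_\infty}$, we get, for every $g \in G_\infty[r_n]$,
\begin{equation*}
  \norm{\pi(\alpha_\infty^{g}(a)) - \alpha_n^{f_n(g)}(b)}{\A_\infty} = \norm{\alpha_\infty^{g}(\pi(a)) - \alpha_\infty^{g}(b)}{\A_\infty} = \norm{\pi(a)-b}{\A_\infty} < \frac{\varepsilon}{2}\Lip_\infty(a) \leq \varepsilon\,\Lip_\infty(a)\text,
\end{equation*}
uniformly in $g$, which is the first clause.

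For the second clause, fix $b \in \dom{\Lip_n}$, normalized so that $\Lip_n(b) = 1$ (the case $\Lip_n(b)=0$ being trivial: then $b$ is a scalar multiple of the unit, hence fixed by $\pi$ and by all $\alpha_n^{g}$ and $\alpha_\infty^{g}$). Choose $a \in \dom{\Lip_\infty}$ with $\Lip_\infty(a) \leq 1$ and $\norm{\pi(a)-b}{\A_\infty} < \frac{\varepsilon}{2}$. For $g \in G_n[r_n]$, using again that $\pi$ commutes with $\alpha_\infty^{\xi_n(g)}$, that $\alpha_\infty^{\xi_n(g)}$ restricts to $\alpha_n^{f_n(\xi_n(g))}$ on $\A_n \ni b$, and that $\ast$-automorphisms are $\norm{\cdot}{}$-isometric,
\begin{align*}
  \norm{\pi(\alpha_\infty^{\xi_n(g)}(a)) - \alpha_n^{g}(b)}{\A_\infty}
  &= \norm{\alpha_\infty^{\xi_n(g)}(\pi(a)) - \alpha_n^{g}(b)}{\A_\infty} \\
  &\leq \norm{\alpha_\infty^{\xi_n(g)}\bigl(\pi(a)-b\bigr)}{\A_\infty} + \norm{\alpha_n^{f_n(\xi_n(g))}(b) - \alpha_n^{g}(b)}{\A_n} \\
  &= \norm{\pi(a)-b}{\A_\infty} + \norm{\alpha_n^{g^{-1}f_n(\xi_n(g))}(b) - b}{\A_n} \\
  &< \frac{\varepsilon}{2} + \KantorovichLength{\Lip_n}\!\left(\alpha_n^{g^{-1}f_n(\xi_n(g))}\right) < \frac{\varepsilon}{2} + \frac{\varepsilon}{2} = \varepsilon = \varepsilon\,\Lip_n(b)\text,
\end{align*}
where the third line uses that $\alpha_n^{g}$ is isometric together with $\alpha_n^{g}\circ\alpha_n^{g^{-1}f_n(\xi_n(g))} = \alpha_n^{f_n(\xi_n(g))}$, and the last line uses $\Lip_n(b)=1$ with the definition of $\KantorovichLength{\Lip_n}$ and the choice of $N_2$ --- applicable since $g^{-1}f_n(\xi_n(g))$ lies in the $\omega$-ball about $e_{G_n}$ (hence in $G_n\left[\frac{2}{\varepsilon}\right]$), by left invariance of $\delta_n$ and $\delta_n(f_n(\xi_n(g)),g) < \omega$. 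As this is uniform in $g \in G_n[r_n]$, the second clause holds, so $\pi$ is a covariant bridge builder, and Theorem (\ref{covariant-bridge-builder-thm}) yields the stated convergence.

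I expect the main obstacle to be precisely the uniformity invoked to pass the last line of the display: the equicontinuity modulus of Hypothesis (\ref{cov-bridge-hyp}) is only available on fixed balls, whereas the supremum ranges over the \emph{growing} balls $G_n[r_n]$. The decisive step is the pull-out maneuver, which trades the possibly large parameter $g$ for the element $g^{-1}f_n(\xi_n(g))$, which stays in a small fixed ball about $e_{G_n}$; making this rigorous requires the quantitative matching between $\varepsilon_n \to 0$, the radius $r_n = \frac{1}{\varepsilon_n}$, and \cite[Lemma 2.10]{Latremoliere18b}, guaranteeing that $\delta_n(f_n(\xi_n(g)),g)$ stays below $\omega$ uniformly over $g \in G_n[r_n]$ for $n$ large. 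Everything else reduces to the commutation and restriction hypotheses on $\pi$ and to the fact that $\ast$-automorphisms preserve the C*-norm.
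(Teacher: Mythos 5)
Your proof is correct and follows the paper's overall strategy: show that $\pi$ is in fact a \emph{covariant} bridge builder and then invoke Theorem (\ref{covariant-bridge-builder-thm}). The first clause is handled identically in both arguments. In the second clause you diverge in a worthwhile way: the paper lifts $g \in G_n[r_n]$ to a preimage $h \in G_\infty$ using the surjectivity of $f_n$ and then compares $\alpha_\infty^{\xi_n(f_n(h))}$ with $\alpha_\infty^{h}$ via the equicontinuity of $\alpha_\infty$, whereas you stay inside $G_n$, use left invariance of $\delta_n$ to convert $\delta_n(f_n(\xi_n(g)),g)<\omega$ into the statement that $g^{-1}f_n(\xi_n(g))$ lies in a fixed small ball about $e_{G_n}$, and then apply the equicontinuity clause of Hypothesis (\ref{cov-bridge-hyp}) directly to $\alpha_n$. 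Your route buys two things: it does not require $f_n$ to be surjective (which is part of Hypothesis (\ref{cov-bridge-proj-hyp}) but not of Hypothesis (\ref{cov-bridge-hyp}), the one literally in force here), and it addresses more explicitly the uniformity over the growing balls $G_n[r_n]$ --- the equicontinuity modulus is only available on fixed balls, and your pull-out maneuver is exactly what reconciles the two; the paper's version relies on the corresponding estimate for $\alpha_\infty$ at the lifted point $h$, whose size is not controlled a priori. The remaining steps (the degenerate case $\Lip_n(b)=0$, the appeal to \cite[Lemma 2.10]{Latremoliere18b} for $\delta_n(f_n(\xi_n(g)),g)$ small on $G_n[r_n]$ for large $n$) are handled at the same level of rigor as the paper itself, so I see no gap.
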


\begin{proof}
	Let $\varepsilon > 0$; by definition of $\pi$, there exists $N\in\N$ such that for all $n\in\N$, Expressions (\ref{bridge-builder-eq-1}) and (\ref{bridge-builder-eq-2}) hold for $\frac{\varepsilon}{2}$ instead of $\varepsilon$. Let $\delta>0$ and $N_0 \in \Nbar$ such that for all $n\geq N_0$, if $g,g'\in G_n$ with $\delta_n(g,g')<\delta$, then 
	\begin{equation}\label{bridge-builder-commute-cor-eq-1-mkD}
	\KantorovichDist{\Lip_n}(\alpha_n^g,\alpha_n^{g'}) < \varepsilon \text.
	\end{equation}
	
	Moreover, by Hypothesis (\ref{cov-bridge-hyp}) and Lemma (\ref{upsilon-lemma}), there exists $N_1\in\N$ such that, for all $n\geq N_1$, we have $r_n > \frac{1}{\varepsilon} + \varepsilon$, and for all $g,g'\in G_\infty\left[\frac{1}{\varepsilon}\right]$ and for all $h \in G_n\left[\frac{1}{\varepsilon}\right]$,
	\begin{equation*}
	\left| \delta_n(f_n(gg'),h) - \delta_\infty(gg',\xi_n(h))\right| < \varepsilon \text.
	\end{equation*}

	Let $a\in \dom{\Lip_\infty}$. Since $\pi$ is a bridge builder, there exists $b \in \dom{\Lip_n}$ with $\Lip_n(b)\leq\Lip_\infty(a)$ such that $\norm{\pi(a)-b}{\A_\infty} < \frac{\varepsilon}{2} \Lip_\infty(a)$. Then for all $g \in G_\infty[r_n]$,
	\begin{align*}
	\norm{\pi(\alpha_\infty^{g}(a)) - \alpha_n^{f_n(g)}(b)}{\A_\infty}
	&=\underbracket[1pt]{\norm{\alpha_\infty^{g}(\pi(a)) - \alpha_n^{f_n(g)}(b)}{\A_\infty}}_{[\pi,\alpha^g]=0} \\
	&\leq \norm{\alpha_\infty^g(\pi(a)-b)}{\A_\infty} + \norm{\alpha_\infty^g(b)-\alpha_n^{f_n(g)}(b)}{\A_\infty} \\
	&\leq \norm{\pi(a)-b}{\A_\infty} + 0 \\
	&< \varepsilon \Lip_\infty(a) \text.
	\end{align*}
	Thus, we have shown that, for all $a\in \dom{\Lip_\infty}$, there exists $b \in \dom{\Lip_n}$ with $\Lip_n(b)\leq\Lip_\infty(a)$ and
	\begin{equation*}
	\sup_{g\in G_\infty[r_n]}\norm{\pi(\alpha^g(a)) - \alpha^{f_n(g)}(a)}{\A_\infty} < \varepsilon \Lip_\infty(a)\text.
	\end{equation*}
	
	Now fix $b \in \dom{\Lip_n}$. Since $\pi$ is a bridge builder, with our choice of $n\geq N$, there exists $a \in \dom{\Lip_\infty}$ with $\Lip_\infty(a)\leq \Lip_n(b)$ and $\norm{\pi(a)-b}{\A_\infty}<\frac{\varepsilon}{2} \Lip_n(b)$. For all $g \in G_n[r_n]$, since $f_n$ is surjective, there exists $h \in G_\infty$ such that $g=f_n(h)$. Since $\alpha_n^g$ and $\pi$ commute, we obtain:
	\begin{align*}
	\norm{\pi(\alpha_\infty^{\xi_n(g)}(a))-\alpha_n^g(b)}{\A_\infty}
	&\leq \norm{\alpha_\infty^{\xi_n(g)}(\pi(a)-b)}{\A_\infty} + \norm{\alpha_\infty^{\xi_n(g)}(b) - \alpha_n^g(b)}{\A_\infty} \\
	&= \norm{\pi(a)-b}{\A_\infty} + \norm{\alpha_\infty^{\xi_n(f_n(h))}(b) - \alpha_n^{f_n(h)}(b)}{\A_\infty} \\
	&\leq \norm{\pi(a)-b}{\A_\infty} + \underbracket[1pt]{\norm{\alpha_\infty^{\xi_n(f_n(h))}(b) - \alpha_\infty^h(b)}{\A_\infty}}_{\text{ by Eq. \eqref{bridge-builder-commute-cor-eq-1-mkD}}  } \\
	&\quad + \underbracket[1pt]{\norm{\alpha_\infty^{h}(b) - \alpha_n^{f_n(h)}(b)}{\A_\infty}}_{=0\text{ by (2)}} \\
	&= \norm{\pi(a)-b}{\A_\infty} + \frac{\varepsilon}{2} \Lip_n(b) + 0 \\
	&< \varepsilon \Lip_n(b) \text.
	\end{align*}
	
	Hence, $\pi$ is actually a covariant bridge builder, and we meet the hypothesis of Theorem (\ref{covariant-bridge-builder-thm}), so our corollary follows.
\end{proof}

We now focus our attention on groups of quantum isometries. If $G\subseteq \Aut{\A}$ is a closed subgroup of $\Aut{\A}$ for the pointwise convergence topology, where $(\A,\Lip)$ is a {\qcms}, we will simply write $(\A,\Lip,G)$ for the associated Lipschitz dynamical system induced by the obvious action of $G$ on $\A$. In this case, we can simplify the situation further, thanks to the following theorem. The main lesson from this theorem is that we can obtain convergence of various groups of quantum isometries, and of their actions, as long as we can extend quantum isometries from a C*-subalgebra in the inductive limit, to the entire limit.

\begin{definition}
 For each $n\in\Nbar \coloneqq \N\cup\{\infty\}$, let $(\A_n,\Lip_n)$ be a {\qcms}, such that $\A_\infty = \closure{\bigcup_{n\in\N} \A_n}$, where $(\A_n)_{n\in\N}$ is an increasing (for $\subseteq$) sequence of C*-subalgebras of $\A_\infty$, with the unit of $\A_\infty$ in $\A_0$. If $\pi$ is any $\ast$-automorphism of $\A_\infty$, then define:
	\begin{equation*}
	\LargeIso(\A_\infty,\Lip_\infty|\pi) \coloneqq \left\{ \alpha\in \LargeIso(\A_\infty,\Lip_\infty) : \pi\circ\alpha=\alpha\circ\pi\text{ and, }\forall n \in \N \quad \alpha_{|\A_n} \in \LargeIso(\A_n,\Lip_n) \right\} \text.
	\end{equation*}
\end{definition}

\begin{theorem}\label{thm:iso-conv}
	For each $n\in\Nbar \coloneqq \N\cup\{\infty\}$, let $(\A_n,\Lip_n)$ be a {\qcms}, such that $\A_\infty = \closure{\bigcup_{n\in\N} \A_n}$, where $(\A_n)_{n\in\N}$ is an increasing (for $\subseteq$) sequence of C*-subalgebras of $\A_\infty$, with the unit of $\A_\infty$ in $\A_0$. Let $\pi$ be a full quantum isometry which is also a bridge builder. 
	The group $\LargeIso(\A_\infty,\Lip_\infty|\pi)$ is a compact subgroup of the $\ast$-automorphism group $\Aut{\A_\infty}$.
	
	Furthermore, if $G_\infty$ is a closed subgroup of $\LargeIso(\A_\infty,\Lip_\infty|\pi)$ and if, for all $n\in\N$, we set $G_n \coloneqq \left\{ \alpha_{|\A_n} \in \Aut{\A_n} : \alpha\in G_\infty \right\}$, then $G_n$ is a closed group, and
	\begin{equation*}
	\lim_{n\rightarrow\infty} \covpropinquity{}((\A_n,\Lip_n,G_n),(\A_\infty,\Lip_\infty,G_\infty)) = 0 \text.
	\end{equation*}
\end{theorem}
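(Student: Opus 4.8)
The plan is to establish the three assertions in order, obtaining the covariant convergence at the end by showing that $\pi$ itself is a covariant bridge builder and invoking Theorem~\eqref{covariant-bridge-builder-thm}.

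\emph{Compactness of $\LargeIso(\A_\infty,\Lip_\infty|\pi)$, and the groups $G_n$.} First I would check this is a subgroup of $\Aut{\A_\infty}$: the constraint $\alpha_{|\A_n}\in\LargeIso(\A_n,\Lip_n)$ forces $\alpha(\A_n)=\A_n$, so restriction is multiplicative on this set and the set is stable under inverses; the constraint $\pi\circ\alpha=\alpha\circ\pi$ is stable under composition and (conjugating $\pi\alpha=\alpha\pi$ by $\alpha^{-1}$) under inversion. By Corollary~\eqref{cor:ISO-compact}, $\LargeIso(\A_\infty,\Lip_\infty)$ is compact and metrizable for $\KantorovichDist{\Lip_\infty}$, so it suffices to show $\LargeIso(\A_\infty,\Lip_\infty|\pi)$ is sequentially closed in it. If $\alpha_k\to\alpha$ pointwise with each $\alpha_k$ in the set, then $\pi(\alpha(a))=\lim_k\pi(\alpha_k(a))=\lim_k\alpha_k(\pi(a))=\alpha(\pi(a))$ by continuity of $\pi$; for each fixed $n$, since $\A_n$ is norm-closed and $\alpha_k(\A_n)=\A_n$ we get $\alpha(\A_n)\subseteq\A_n$, and likewise $\alpha^{-1}(\A_n)\subseteq\A_n$ using $\alpha_k^{-1}\to\alpha^{-1}$ (Lemma~\eqref{Kantorovich-Length-lemma}), hence $\alpha(\A_n)=\A_n$; and $(\alpha_k)_{|\A_n}\to\alpha_{|\A_n}$ for $\KantorovichDist{\Lip_n}$, which metrizes pointwise convergence on $\A_n$ (Theorem~\eqref{KantorovichDist-thm}), so $\alpha_{|\A_n}\in\LargeIso(\A_n,\Lip_n)$ as this group is closed by Corollary~\eqref{cor:ISO-compact}. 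This gives compactness. For the $G_n$: the restriction map $\rho_n:\alpha\mapsto\alpha_{|\A_n}$ is a group homomorphism from $G_\infty$ onto $G_n$ landing in $\LargeIso(\A_n,\Lip_n)$ (by definition of $\LargeIso(\A_\infty,\Lip_\infty|\pi)$), continuous from $\KantorovichDist{\Lip_\infty}$ to $\KantorovichDist{\Lip_n}$; since $G_\infty$ is a closed subgroup of a compact group it is compact, so $G_n=\rho_n(G_\infty)$ is a compact, hence closed, subgroup of $\LargeIso(\A_n,\Lip_n)$.

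\emph{The iso-iso data and Hypothesis~\eqref{cov-bridge-hyp}.} Equip $G_n$ and $G_\infty$ with $\delta_n=\KantorovichDist{\Lip_n}$ and $\delta_\infty=\KantorovichDist{\Lip_\infty}$, which are proper (by compactness) and left invariant (Lemma~\eqref{Kantorovich-Length-lemma}), and let the actions be tautological. I would take $f_n:=\rho_n$ and, by surjectivity of $\rho_n$, a unital set-theoretic section $\xi_n:G_n\to G_\infty$ with $(\xi_n(g))_{|\A_n}=g$, so that $f_n\xi_n=\mathrm{id}_{G_n}$ and $f_n$ is a group morphism. The key estimate is: for every $\varepsilon>0$ there is $N$ with, for all $n\geq N$ and $\alpha,\beta\in G_\infty$,
\[
\bigl|\KantorovichDist{\Lip_n}(\alpha_{|\A_n},\beta_{|\A_n})-\KantorovichDist{\Lip_\infty}(\alpha,\beta)\bigr|\leq 2\varepsilon.
\]
Indeed, for $a\in\dom{\Lip_\infty}$ with $\Lip_\infty(a)\leq 1$, Equation~\eqref{bridge-builder-eq-1} produces $b\in\dom{\Lip_n}$ with $\Lip_n(b)\leq 1$ and $\norm{\pi(a)-b}{\A_\infty}<\varepsilon$, and since $\pi$ commutes with $\alpha,\beta\in G_\infty$ and all maps are norm-isometries,
\[
\norm{\alpha(a)-\beta(a)}{\A_\infty}=\norm{\alpha(\pi(a))-\beta(\pi(a))}{\A_\infty}\leq 2\varepsilon+\norm{\alpha(b)-\beta(b)}{\A_n}\leq 2\varepsilon+\KantorovichDist{\Lip_n}(\alpha_{|\A_n},\beta_{|\A_n}),
\]
with the reverse inequality obtained symmetrically from Equation~\eqref{bridge-builder-eq-2}. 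The case $\varepsilon=1$ shows the diameters $\diam{G_n}{\delta_n}$ are bounded uniformly in $n$, so one can choose $\varepsilon_n\to0$ with $r_n:=\tfrac1{\varepsilon_n}$ eventually dominating all these diameters and $\diam{G_\infty}{\delta_\infty}$; then $G_\infty[r_n]=G_\infty$, $G_n[r_n]=G_n$, and — using $f_n\xi_n=\mathrm{id}$, that $f_n$ is a morphism, and $(\xi_n(g)\xi_n(g'))_{|\A_n}=gg'$ — Equations~\eqref{rn-epsilon-iso-eq-1}–\eqref{rn-epsilon-iso-eq-2} for $(f_n,\xi_n)$ reduce to the displayed estimate, so $(f_n,\xi_n)$ is an $\varepsilon_n$-iso-iso. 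The equicontinuity clause of Hypothesis~\eqref{cov-bridge-hyp} is automatic since $\alpha_n^g=g$ and $\delta_n=\KantorovichDist{\Lip_n}$.

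\emph{$\pi$ is a covariant bridge builder, and conclusion.} Finally I would verify Definition~\eqref{cov-bridge-def} directly. Given $\varepsilon>0$ and $a\in\dom{\Lip_\infty}$, take $b$ as above with $\Lip_n(b)\leq\Lip_\infty(a)$ and $\norm{\pi(a)-b}{\A_\infty}<\varepsilon\Lip_\infty(a)$; then for $g\in G_\infty[r_n]$, since $\pi g=g\pi$ and $\alpha_n^{f_n(g)}(b)=g_{|\A_n}(b)=g(b)$, we get $\norm{\pi(\alpha_\infty^g(a))-\alpha_n^{f_n(g)}(b)}{\A_\infty}=\norm{g(\pi(a))-g(b)}{\A_\infty}=\norm{\pi(a)-b}{\A_\infty}<\varepsilon\Lip_\infty(a)$, which is clause (1); clause (2) follows identically from Equation~\eqref{bridge-builder-eq-2}, using the section so that $\xi_n(g)(b)=g(b)$ for $b\in\A_n$. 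Hence $\pi$ is a covariant bridge builder for this data, and Theorem~\eqref{covariant-bridge-builder-thm} yields $\lim_n\covpropinquity{}((\A_n,\Lip_n,G_n),(\A_\infty,\Lip_\infty,G_\infty))=0$. I expect the main obstacle to be the displayed almost-isometry estimate for the restriction map: it hinges on combining the bridge-builder approximation with the fact — built into the definition of $\LargeIso(\A_\infty,\Lip_\infty|\pi)$ — that every element of $G_\infty$ commutes with $\pi$, which is precisely what also makes $\pi$ covariant; the remaining work is the bookkeeping needed to fit this into the quantitative framework of Hypothesis~\eqref{cov-bridge-hyp} (the choice of $\varepsilon_n$ and $r_n=1/\varepsilon_n$, and the reduction of the $r_n$-local iso-iso inequalities to their global form).
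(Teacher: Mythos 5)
Your proof is correct and follows essentially the same route as the paper's: restriction maps as the $f_n$, set-theoretic sections as the $\xi_n$, the bridge-builder approximation combined with commutation with $\pi$ to obtain the almost-isometric-isomorphism estimates, and then Theorem \eqref{covariant-bridge-builder-thm}. The only cosmetic difference is that you verify Definition \eqref{cov-bridge-def} for $\pi$ directly and package the iso-iso inequalities into a single uniform comparison of $\KantorovichDist{\Lip_n}$ with $\KantorovichDist{\Lip_\infty}$, whereas the paper routes the same computation through Corollary \eqref{bridge-builder-commute-cor}.
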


\begin{proof}
  If $(\alpha_n)_{n\in\N}$ is a sequence in $\LargeIso(\A_\infty,\Lip_\infty|\pi)$ converging in $\Aut{\A_\infty}$ to some $\alpha$ for $\KantorovichDist{\Lip_\infty}$, then, for all $a\in\A_n$,  the sequence $(\alpha_n(a))_{n\in\N}$ is norm convergent, and thus $\alpha_\infty(a) \in \A_n$ as $\A_n$ is closed in $\A_\infty$; moreover by continuity, $\alpha(\pi(a)) = \pi(\alpha(a))$ for all $a\in\A_\infty$. Therefore, $\alpha\in\LargeIso(\A_\infty,\Lip_\infty|\pi)$. So $\LargeIso(\A_\infty,\Lip_\infty|\pi)$ is a closed subset of $\Aut{\A_\infty}$. Since $\LargeIso(\A_\infty,\Lip_\infty)$ is a compact metric group by Theorem (\ref{compact-iso-group-thm}), it then follows that $\LargeIso(\A_\infty,\Lip_\infty|\pi)$ is a compact subgroup of $\LargeIso(\A_\infty,\Lip_\infty)$.
  
  \medskip
  
  Fix $n\in\N$. For $\alpha \in G_\infty$, we simply write $f_n(\alpha)$ for the restriction $\alpha_{|\A_n}$ of $\alpha$ to $\A_n$, so $f_n(\alpha) \in G_n$. Thus, $f_n$ is a surjective group morphism. On the other hand, if $\beta\in G_n$, then by definition, there exists $\alpha\in G_\infty$ such that $\beta=f_n(\alpha)$; we set $\xi_n(\beta)\coloneqq \alpha$ for such a choice of $\alpha$ (this choice is by no means unique in general).

By construction, the topology induced by $\KantorovichDist{\Lip_\infty}$ (resp. $\KantorovichDist{\Lip_n}$) on $G_\infty$ (resp. $G_n$) is the topology of pointwise convergence, and $f_n$ is continuous for these topologies. As $G_n = f_n(G_\infty)$, the group $G_n$ is also compact since the continuous image of the compact set $G_\infty$ (indeed $G_\infty$ is compact since it is  a closed subset of the compact group $\LargeIso(\A_\infty,\Lip_\infty)$, cf. Theorem (\ref{compact-iso-group-thm})). Hence $G_n$ is compact, and therefore closed as the topology on $G_n$ is metrized and so Hausdorff. 
  

	We now check that $(f_n,\xi_n)$ is a local almost isometric isomorphism for all $n\in\N$, as in \cite[Definition 2.5]{Latremoliere18b}. 
	
	Fix $\varepsilon > 0$ and let $N\in\N$ be given by the bridge builder property. Let $\beta,\beta' \in G_n$ and $\alpha \in G_\infty$. Now, let $a\in\dom{\Lip_\infty}$ with $\Lip_\infty(a) \leq 1$. Since $\pi$ is a bridge builder, there exists $b \in \A_n$ with $\Lip_n(b)\leq 1$, and $\norm{\pi(a)-b}{\A_\infty} < \frac{\varepsilon}{2}$. So
	\begin{align*}
	\norm{\xi_n(\beta)\circ\xi_n(\beta')(a) - \alpha(a)}{\A_\infty}
	&=\underbracket[1pt]{\norm{\pi\left(\xi_n(\beta)\circ\xi_n(\beta')(a) - \alpha(a)\right)}{\A_\infty}}_{\text{since $\pi$ is an isometry for $\norm{\cdot}{\A_\infty}$}} \\
	&\leq \norm{\pi\left(\xi_n(\beta)\circ\xi_n(\beta')(a)-\xi_n(\beta)\circ\xi_n(\beta')(\pi^{-1}(b))\right)}{\A_\infty} \\
	&\quad + \norm{\pi\left(\xi_n(\beta)\circ\xi_n(\beta')(\pi^{-1}(b)) - \alpha(a)\right)}{\A_\infty} \\
	&\leq \underbracket[1pt]{\norm{\xi_n(\beta)\circ\xi_n(\beta')(\pi(a) - b)}{\A_\infty}}_{\text{ since $\pi$ commutes with elements in $G_\infty$}} \\
	&\quad + \underbracket[1pt]{\norm{\xi_n(\beta)\circ\xi_n(\beta')(b) - \alpha(\pi(a))}{\A_\infty}}_{\text{ since $\pi$ commutes with elements in $G_\infty$}} \\
	&\leq \underbracket[1pt]{\norm{\pi(a)-b}{\A_\infty}}_{\xi_n(\beta)\circ\xi_n(\beta')\in\Aut{\A_\infty}} + \underbracket[1pt]{\norm{\beta\circ\beta'(b) - \alpha(\pi(a))}{\A_\infty}}_{b \in \A_n} \\
	&< \frac{\varepsilon}{2} + \norm{\beta\circ\beta'(b) - \alpha(b)}{\A_\infty} + \norm{\alpha(b)-\alpha(\pi(a))}{\A_\infty} \\
	&\leq \frac{\varepsilon}{2} + \underbracket[1pt]{\norm{\beta\circ\beta'(b) - f_n(\alpha)(b)}{\A_\infty}}_{\text{by def. of }f_n} + \underbracket[1pt]{\norm{b-\pi(a)}{\A_\infty}}_{\alpha\in\Aut{\A_\infty}} \\
	&< \frac{\varepsilon}{2} + \KantorovichDist{\Lip_n}(\beta\circ\beta',f_n(\alpha)) + \frac{\varepsilon}{2} \\
	&= \KantorovichDist{\Lip_n}(\beta\circ\beta',f_n(\alpha)) + \varepsilon \text.
	\end{align*}
	Hence
	\begin{equation*}
	\KantorovichDist{\Lip_\infty}(\xi_n(\beta)\circ\xi_n(\beta'),\alpha) < \KantorovichDist{\Lip_n}(\beta\circ\beta',f_n(\alpha)) + \varepsilon\text.
	\end{equation*}

	On the other hand, let $b \in \dom{\Lip_n}$ with $\Lip_n(b)\leq 1$. Again, as $\pi$ is a bridge builder, there exists $a \in \dom{\Lip_\infty}$ with $\Lip_\infty(a)\leq 1$ and $\norm{\pi(a)-b}{\A_\infty} < \frac{\varepsilon}2$. Moreover by assumption, $\Lip_\infty(\pi(a)) \leq \Lip_\infty(a) \leq 1$. Therefore, we conclude:
	\begin{align*}
	\norm{\beta\circ\beta'(b)-f_n(\alpha)(b)}{\A_\infty}
	&\leq \norm{\beta\circ\beta'(b)-\xi_n(\beta)\circ\xi_n(\beta')(\pi(a))}{\A_\infty} \\
	&\quad + \norm{\xi_n(\beta)\circ\xi_n(\beta')(\pi(a)) - \alpha(\pi(a))}{\A_\infty} + \norm{\alpha(\pi(a))-\alpha(b)}{\A_\infty} \\
	&\leq \norm{\xi_n(\beta)\circ\xi_n(\beta')(b-\pi(a))}{\A_\infty} + \underbracket[1pt]{\KantorovichDist{\Lip_\infty}(\xi_n(\beta)\circ\xi_n(\beta'),\alpha)}_{\Lip_\infty(\pi(a))\leq 1} \\
	&\quad + \underbracket[1pt]{\norm{\pi(a)-b}{\A_\infty}}_{\alpha\in\Aut{\A}} \\
	&\leq \underbracket[1pt]{\norm{\pi(a)-b}{\A_\infty}}_{\xi_n(\beta)\circ\xi_n(\beta') \in \Aut{\A_\infty}} + \KantorovichDist{\Lip_\infty}(\xi_n(\beta)\circ\xi_n(\beta'),\alpha) + \frac{\varepsilon}{2} \\
	&< \frac{\varepsilon}{2} + \KantorovichDist{\Lip_\infty}(\xi_n(\beta)\circ\xi_n(\beta'),\alpha) + \frac{\varepsilon}{2} \\
	&= \KantorovichDist{\Lip_\infty}(\xi_n(\beta)\circ\xi_n(\beta'),\alpha) +  \varepsilon \text.
	\end{align*}
	Hence
	\begin{equation*}
	\KantorovichDist{\Lip_n}(\beta\circ\beta',f_n(\alpha)) < \KantorovichDist{\Lip_\infty}(\xi_n(\beta)\circ\xi_n(\beta),\beta)+ \varepsilon  \text.
	\end{equation*}
	
	Therefore
	\begin{equation*}
	\left| \KantorovichDist{\Lip_n}(\beta\circ\beta',f_n(\alpha))-\KantorovichDist{\Lip_\infty}(\xi_n(\beta)\circ\xi_n(\beta'),\alpha) \right| < \varepsilon \text.
	\end{equation*}
	
	\medskip
	
	Now, let $\alpha,\alpha' \in G_\infty$ and $\beta \in G_n$. Fix $b\in\dom{\Lip_n}$ with $\Lip_n(b)\leq 1$. Since $b \in \A_n$, the map $f_n$ is the restriction to $\A_n$, and $\xi_n$ extends automorphisms of $\A_n$, we conclude:
	\begin{equation*}
	\norm{f_n(\alpha\circ\alpha')(b) - \beta(b)}{\A_\infty} = \norm{\alpha\circ\alpha'(b) - \xi_n(\beta)(b)}{\A_\infty} \leq \KantorovichDist{\Lip_\infty}(\alpha\circ\alpha',\xi_n(\beta)) \text,
	\end{equation*}
	so $\KantorovichDist{\Lip_n}(f_n(\alpha\circ\alpha'),\beta) \leq \KantorovichDist{\Lip_\infty}(\alpha\circ\alpha',\xi_n(\beta))$.

	Lastly, if $a \in \dom{\Lip_\infty}$ with $\Lip_\infty(a)\leq 1$, there exists   $b \in \dom{\Lip_n}$ with $\Lip_n(b)\leq \Lip_\infty(a)\leq 1$ and $\norm{\pi(a)-b}{\A_\infty} < \frac{\varepsilon}2$. We then have:
	\begin{align*}
	\norm{\alpha\circ\alpha'(a) - \xi_n(\beta)(a)}{\A_\infty}
	&= \underbracket[1pt]{\norm{\pi\left(\alpha\circ\alpha'(a) - \xi_n(\beta)(a)\right)}{\A_\infty}}_{\pi\in\Aut{\A_\infty}} \\
	&\leq \underbracket[1pt]{\norm{\alpha\circ\alpha'(\pi(a)) - \alpha\circ\alpha'(b)}{\A_\infty}}_{\pi \text{ commutes with }\alpha\circ\alpha'} + \norm{\alpha\circ\alpha'(b)-\beta(b)}{\A_\infty} \\
	&\quad + \norm{\beta(b) - \xi_n(\beta)(\pi(a))}{\A_\infty} \\
	&\leq \underbracket[1pt]{\norm{\pi(a) - b}{\A_\infty}}_{\alpha\circ\alpha'\in\Aut{\A}} + \KantorovichDist{\Lip_n}(f_n(\alpha\circ\alpha'),\beta) + \norm{b-\pi(a)}{\A_\infty} \\
	&< \KantorovichDist{\Lip_n}(f_n(\alpha\circ\alpha'),\beta) + \varepsilon \text.
	\end{align*}
	
	We thus have shown that $\KantorovichDist{\Lip_\infty}(\alpha\circ\alpha',\xi_n(\beta)) \leq \KantorovichDist{\Lip_n}(f_n(\alpha\circ\alpha'),\beta) + \varepsilon$. Henceforth,
	\begin{equation*}
	\left| \KantorovichDist{\Lip_\infty}(\alpha\circ\alpha',\xi_n(\beta)) - \KantorovichDist{\Lip_n}(f_n(\alpha\circ\alpha'),\beta)\right| < \varepsilon \text.
	\end{equation*}
	
	Our result now follows from Corollary (\ref{bridge-builder-commute-cor}) by construction.
\end{proof}

\section{Applications}\label{sec:applications}

We now apply our work on covariant bridge builders to obtain metric convergence of  groups of isometries to their projective limits for certain inductive sequences of {\qcms s}.

\subsection{The Identity as a Bridge Builder}

A particular situation for the application of Theorem (\ref{thm:iso-conv}) is when the identity itself is a covariant bridge builder --- which indeed happens in several examples included later in this section. The commutation requirement of Theorem (\ref{thm:iso-conv}) is then automatically satisfied. We thus obtain the following nice convergence results, where the main problem left is whether we can extend quantum isometries from a subalgebra to the entire algebra. We phrase this by working with groups of quantum isometries of the inductive limit consisting of automorphisms which restrict to quantum isometries on each factor of the inductive sequence. Our first result in this section is about general quantum isometries and it is an immediate  consequence of Theorem \eqref{thm:iso-conv}.

\begin{corollary}\label{id-cov-cor}
  For each $n\in\Nbar \coloneqq \N\cup\{\infty\}$, let $(\A_n,\Lip_n)$ be a {\qcms}, such that $\A_\infty = \closure{\bigcup_{n\in\N} \A_n}$, where $(\A_n)_{n\in\N}$ is an increasing (for $\subseteq$) sequence of C*-subalgebras of $\A_\infty$, with the unit of $\A_\infty$ in $\A_0$. Assume that the identity automorphism of  $\A_\infty$ is also a bridge builder. 
  Define:
  
  \begin{equation*}
    G_\infty \coloneqq \left\{ \alpha\in \LargeIso(\A_\infty,\Lip_\infty) : \forall n\in \N  \quad \alpha_{|\A_n}\in\LargeIso(\A_n,\Lip_n) \right\}.
  \end{equation*}
  If, for each $n\in\N$, we define:
  \begin{equation*}
    G_n \coloneqq \left\{ \alpha_{|\A_n} \in \Aut{\A_n} : \alpha \in G_\infty \right\} \text,
  \end{equation*}
  then
  \begin{equation*}
    \lim_{n\rightarrow\infty} \covpropinquity{}((\A_\infty,\Lip_\infty,G_\infty)),(\A_n,\Lip_n,G_n)) = 0 \text.
  \end{equation*}
\end{corollary}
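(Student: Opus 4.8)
The plan is to deduce this directly from Theorem \eqref{thm:iso-conv} by taking $\pi = \mathrm{id}_{\A_\infty}$. First I would check that $\mathrm{id}_{\A_\infty}$ is a full quantum isometry of $(\A_\infty,\Lip_\infty)$: this is immediate, since $\Lip_\infty\circ\mathrm{id}_{\A_\infty} = \Lip_\infty$ and $\mathrm{id}_{\A_\infty}(\dom{\Lip_\infty}) = \dom{\Lip_\infty}$. By hypothesis, $\mathrm{id}_{\A_\infty}$ is also a bridge builder for $((\A_n,\Lip_n),(\A_\infty,\Lip_\infty))$, so all the standing assumptions of Theorem \eqref{thm:iso-conv} are in force with this choice of $\pi$.

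Next I would identify the set $G_\infty$ defined in the statement with $\LargeIso(\A_\infty,\Lip_\infty|\mathrm{id}_{\A_\infty})$. The commutation condition $\pi\circ\alpha = \alpha\circ\pi$ appearing in the definition of $\LargeIso(\A_\infty,\Lip_\infty|\pi)$ reads $\alpha = \alpha$ when $\pi = \mathrm{id}_{\A_\infty}$, hence is automatically satisfied by every $\alpha\in\Aut{\A_\infty}$; therefore
\[
\LargeIso(\A_\infty,\Lip_\infty|\mathrm{id}_{\A_\infty}) = \left\{ \alpha\in\LargeIso(\A_\infty,\Lip_\infty) : \forall n\in\N\quad \alpha_{|\A_n}\in\LargeIso(\A_n,\Lip_n) \right\} = G_\infty \text.
\]
In particular $G_\infty$ is a compact subgroup of $\Aut{\A_\infty}$ by Theorem \eqref{thm:iso-conv}, and, being trivially a closed subgroup of itself, it meets the hypotheses of the second part of that theorem with the closed subgroup there taken to be all of $\LargeIso(\A_\infty,\Lip_\infty|\mathrm{id}_{\A_\infty}) = G_\infty$.

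Finally, the groups $G_n = \{\alpha_{|\A_n} : \alpha\in G_\infty\}$ defined in the statement coincide verbatim with the groups $G_n$ produced by Theorem \eqref{thm:iso-conv}; that theorem then asserts that each $G_n$ is closed and that $\lim_{n\to\infty}\covpropinquity{}((\A_n,\Lip_n,G_n),(\A_\infty,\Lip_\infty,G_\infty)) = 0$, which is the desired conclusion, the covariant propinquity being symmetric in its two arguments. I do not expect any genuine obstacle here: the analytic content — the construction of the tunnels $\D_n = \A_\infty\oplus\A_n$, the verification that $(f_n,\xi_n)$ (with $f_n$ the restriction and $\xi_n$ a section) is an iso-iso, and the Hausdorff estimates on the state spaces — was already carried out in Theorem \eqref{thm:iso-conv} and Corollary \eqref{bridge-builder-commute-cor}. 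The only point worth isolating explicitly is the one highlighted above, namely that choosing $\pi$ to be the identity makes the commutation hypothesis of Theorem \eqref{thm:iso-conv} vacuous, so that no compatibility between the isometries of the factors and a nontrivial bridge builder needs to be imposed.
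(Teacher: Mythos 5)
Your proposal is correct and is precisely the argument the paper intends: the corollary is stated as an immediate consequence of Theorem \eqref{thm:iso-conv}, obtained by taking $\pi=\mathrm{id}_{\A_\infty}$, noting that the commutation condition then becomes vacuous so that $\LargeIso(\A_\infty,\Lip_\infty|\mathrm{id}_{\A_\infty})=G_\infty$, and applying the second part of that theorem with the closed subgroup taken to be all of $G_\infty$.
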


We now can look more closely at the group of quantum isometries of a metric spectral triple. Of course, these are special cases of compact subgroups of the group of quantum isometries of the induced {\qcms s}, but we want to find some reasonable condition to ensure that our convergence entirely takes place within the spectral triple context.

The following result is also a corollary of Theorem \eqref{thm:iso-conv}.

\begin{corollary}\label{id-cov-small-cor}
  Let $\A_\infty \coloneqq \closure{\bigcup_{n\in\N} \A_n}$, where $(\A_n)_{n\in\N}$ is an increasing (for $\subseteq$) sequence of C*-subalgebras of $\A_\infty$, with the unit of $\A_\infty$ in $\A_0$.  Assume that the identity automorphism of $\A_\infty$ is also a bridge builder.  Let $(\A_\infty,\Hilbert_\infty,\Dirac_\infty)$ be any metric  spectral triple on $\A_\infty$.
  Assume that $(\Hilbert_n)$ is an increasing  sequence of Hilbert subspaces of $\Hilbert_\infty$ with the properties:
\begin{enumerate}
\item For all $n \in \N$: $\A_n \Hilbert_n \subseteq \Hilbert_n$,
\item $\Hilbert_\infty = \closure{\bigcup_{n \in \N}\Hilbert_n}$,
\item $\Dirac_\infty$ commutes with the projections on $\Hilbert_n,$ for every $n \in \N$,
\item $(\A_n, \Hilbert_n, \Dirac_n)$ is metric for every $n \in \N$.
\end{enumerate}

Let:

\begin{equation*}
  J_\infty \coloneqq \left\{ \alpha\in \smallIso(\A_\infty,\Hilbert_\infty,\Dirac_\infty) : \forall n\in \N  \quad \alpha_{|\A_n}\in\LargeIso(\A_n,\Lip_n) \right\}
\end{equation*}
and
\begin{equation*}
  \begin{split}
    G_\infty \coloneqq \left\{ \alpha=\AdRep{U} \in {\Aut{\A_\infty}}: \ U\in \mathcal{U}(\Hilbert_\infty), [\Dirac_\infty, U]=0\text{ with the property: } \right. \\
    \left.  \forall n\in \N  \quad U\Hilbert_n \subseteq \Hilbert_n \text{ and }  \alpha(\A_n) \subseteq \A_n \right\},
  \end{split}
\end{equation*}
where $\mathcal{U}(\Hilbert_\infty)$ is the set of unitaries on $\Hilbert_\infty.$

If we define, for each $n\in\N$:
\begin{equation*}
  J_n \coloneqq \left\{ \alpha_{|\A_n} \in \Aut{\A_n} : \alpha \in J_\infty \right\} \text{ and }G_n \coloneqq \left\{ \alpha_{|\A_n} \in \Aut{\A_n} : \alpha \in G_\infty \right\} \text,
\end{equation*}
then,  for all $n\in\Nbar$:
\[G_n \subseteq \smallIso(\A_n,\Hilbert_n,\Dirac_n)\]
and

\begin{equation}\label{eq:J-eq}
  \lim_{n\rightarrow\infty} \covpropinquity{}((\A_\infty,\Lip_\infty,J_\infty),(\A_n,\Lip_n,J_n)) = 0 \text,
\end{equation}

\begin{equation}\label{eq:G-eq}
  \lim_{n\rightarrow\infty} \covpropinquity{}((\A_\infty,\Lip_\infty,G_\infty),(\A_n,\Lip_n,G_n)) = 0 \text,
\end{equation}

\end{corollary}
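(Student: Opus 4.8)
The plan is to reduce everything to Theorem (\ref{thm:iso-conv}) applied to the identity bridge builder $\pi = \mathrm{id}_{\A_\infty}$, and to spend the real work on verifying that the sets $J_\infty$ and $G_\infty$ defined here are precisely of the form $\LargeIso(\A_\infty,\Lip_\infty|\mathrm{id})$ restricted to a closed subgroup, i.e.\ that they are closed subgroups of $\LargeIso(\A_\infty,\Lip_{\Dirac_\infty})$ all of whose elements restrict to full quantum isometries on each $\A_n$. First I would record that, by Expression (\ref{spectral-triple-iso-comm-eq}) applied to $(\A_\infty,\Hilbert_\infty,\Dirac_\infty)$, every element of $\smallIso(\A_\infty,\Hilbert_\infty,\Dirac_\infty)$ lies in $\LargeIso(\A_\infty,\Lip_{\Dirac_\infty})$; hence $J_\infty$ is exactly the set of $\alpha \in \smallIso(\A_\infty,\Hilbert_\infty,\Dirac_\infty)$ with $\alpha_{|\A_n} \in \LargeIso(\A_n,\Lip_n)$ for all $n$, which is a subset of $\LargeIso(\A_\infty,\Lip_\infty\,|\,\mathrm{id})$ since the identity commutes with everything. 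For $G_\infty$, I need the extra hypotheses (1)--(4): if $\alpha = \AdRep{U}$ with $U\Hilbert_n \subseteq \Hilbert_n$, $\alpha(\A_n)\subseteq\A_n$, then because $U$ is unitary and $\Hilbert_n$ is closed one gets $U\Hilbert_n = \Hilbert_n$ (apply the same argument to $U^\ast$, using that $U^\ast$ commutes with $\Dirac_\infty$ too), and similarly $\alpha(\A_n) = \A_n$. Letting $U_n \coloneqq U_{|\Hilbert_n}$, condition (3) — $\Dirac_\infty$ commutes with the projection $P_n$ onto $\Hilbert_n$ — gives that $\Dirac_n \coloneqq \Dirac_\infty P_n$ (equivalently the restriction of $\Dirac_\infty$ to $\dom{\Dirac_\infty}\cap\Hilbert_n$) is a self-adjoint operator on $\Hilbert_n$, that $U_n$ preserves $\dom{\Dirac_n}$, and that $[\Dirac_n,U_n] = 0$; condition (4) then says $(\A_n,\Hilbert_n,\Dirac_n)$ is metric, so $\alpha_{|\A_n} = \AdRep{U_n} \in \smallIso(\A_n,\Hilbert_n,\Dirac_n) \subseteq \LargeIso(\A_n,\Lip_n)$. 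This gives both the asserted inclusion $G_n \subseteq \smallIso(\A_n,\Hilbert_n,\Dirac_n)$ and membership $G_\infty \subseteq \LargeIso(\A_\infty,\Lip_\infty\,|\,\mathrm{id})$.

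Next I would check that $J_\infty$ and $G_\infty$ are \emph{closed} subgroups of $\LargeIso(\A_\infty,\Lip_{\Dirac_\infty})$ for $\KantorovichDist{\Lip_{\Dirac_\infty}}$. That they are subgroups is a routine check (the defining conditions — implementability by a $\Dirac_\infty$-commuting unitary, preservation of each $\A_n$ resp.\ $\Hilbert_n$ — are stable under composition and inversion, exactly as in the paragraph after Definition (\ref{small-iso-def})). For closedness: given a convergent sequence $(\alpha_k)_{k}$ in $J_\infty$ with limit $\alpha$ in $\LargeIso(\A_\infty,\Lip_{\Dirac_\infty})$, Theorem (\ref{compact-iso-group-thm}) already tells us $\alpha \in \smallIso(\A_\infty,\Hilbert_\infty,\Dirac_\infty)$ (that theorem's proof produces the implementing $\Dirac_\infty$-commuting unitary $V$ as an SOT-limit of a subsequence of implementing unitaries $U_k$); and the condition $\alpha_{|\A_n}\in\LargeIso(\A_n,\Lip_n)$ passes to the limit because $\LargeIso(\A_n,\Lip_n)$ is compact, hence closed, by Corollary (\ref{cor:ISO-compact}) — more precisely, $\alpha(a) = \lim_k \alpha_k(a) \in \A_n$ for $a\in\A_n$ (each $\A_n$ closed), so $\alpha_{|\A_n}$ makes sense, and $\alpha_{|\A_n} = \lim_k (\alpha_k)_{|\A_n}$ in $\KantorovichDist{\Lip_n}$, which lies in the closed set $\LargeIso(\A_n,\Lip_n)$. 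The same argument works for $G_\infty$, using in addition that the SOT-limit $V$ of the $U_k$ satisfies $V\Hilbert_n\subseteq\Hilbert_n$: indeed, $U_k\Hilbert_n\subseteq\Hilbert_n$ and $\Hilbert_n$ is closed, so $P_nV\xi = \lim_k P_nU_k\xi = \lim_k U_k\xi = V\xi$ for $\xi\in\Hilbert_n$; and $\alpha(\A_n) = (\AdRep{V})(\A_n)\subseteq\A_n$ follows by continuity from $\alpha_k(\A_n)\subseteq\A_n$.

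With this in hand, the conclusion is immediate: the identity $\mathrm{id}_{\A_\infty}$ is a full quantum isometry and, by hypothesis, a bridge builder; $J_\infty$ (resp.\ $G_\infty$) is a closed subgroup of $\LargeIso(\A_\infty,\Lip_\infty\,|\,\mathrm{id}) = \LargeIso(\A_\infty,\Lip_\infty\,|\,\pi)$; and $J_n$ (resp.\ $G_n$) is exactly the set of restrictions to $\A_n$ of elements of $J_\infty$ (resp.\ $G_\infty$). Theorem (\ref{thm:iso-conv}) then applies verbatim and yields that $J_n$ (resp.\ $G_n$) is a closed group and that
\[
\lim_{n\rightarrow\infty} \covpropinquity{}((\A_n,\Lip_n,J_n),(\A_\infty,\Lip_\infty,J_\infty)) = 0
\]
and likewise for $G$, which are Equations (\ref{eq:J-eq}) and (\ref{eq:G-eq}). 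The main obstacle I anticipate is purely the spectral-triple bookkeeping in the second paragraph: carefully deducing from condition (3) that the compression $\Dirac_n = \Dirac_\infty P_n$ is genuinely self-adjoint with compact resolvent on $\Hilbert_n$ and that the commutation $[\Dirac_\infty,U]=0$ together with $U\Hilbert_n = \Hilbert_n$ descends to $[\Dirac_n,U_n]=0$ on $\dom{\Dirac_n}$ — everything else is an application of the already-proved compactness theorems (Corollary (\ref{cor:ISO-compact}), Theorem (\ref{compact-iso-group-thm})) and of Theorem (\ref{thm:iso-conv}).
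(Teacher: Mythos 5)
Your route is the same as the paper's: show that $J_\infty$ and $G_\infty$ are closed subgroups of the compact group $\LargeIso(\A_\infty,\Lip_{\Dirac_\infty})$ all of whose elements restrict to quantum isometries of each $(\A_n,\Lip_n)$, and then apply Theorem (\ref{thm:iso-conv}) with the identity as bridge builder; your closedness argument for $G_\infty$ (extracting an SOT-convergent subsequence of the implementing unitaries as in the proof of Theorem (\ref{compact-iso-group-thm})) is exactly what the paper does. You in fact go further than the paper on one point it leaves implicit, namely the inclusion $G_n\subseteq\smallIso(\A_n,\Hilbert_n,\Dirac_n)$.

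The one justification you offer there does not work as written. To get that $U_{|\Hilbert_n}$ is a \emph{unitary} of $\Hilbert_n$ (and not merely a non-surjective isometry) you need $U\Hilbert_n=\Hilbert_n$, and your parenthetical ``apply the same argument to $U^\ast$'' is circular: the definition of $G_\infty$ only postulates $U\Hilbert_n\subseteq\Hilbert_n$, and nothing gives $U^\ast\Hilbert_n\subseteq\Hilbert_n$ a priori --- a unitary of $\Hilbert_\infty$ can map a closed subspace strictly into itself (bilateral shift on $\ell^2(\Z)$ and the subspace $\ell^2(\N)$). The correct fix uses the spectral data: $\Dirac_\infty$ has compact resolvent, so its eigenspaces $E_\lambda$ are finite dimensional; since $\Dirac_\infty$ commutes with the projection onto $\Hilbert_n$, the space $\Hilbert_n$ is the closed linear span of the subspaces $\Hilbert_n\cap E_\lambda$; and since $[\Dirac_\infty,U]=0$, the unitary $U$ maps each finite-dimensional $\Hilbert_n\cap E_\lambda$ injectively into itself, hence onto it, so $U\Hilbert_n=\Hilbert_n$ and likewise $U^\ast\Hilbert_n=\Hilbert_n$ (this is also what makes $G_\infty$ stable under inverses). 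With that repair your argument is complete and coincides with the paper's.
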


\begin{proof} Equation \eqref{eq:J-eq} follows immediately from Theorem \eqref{thm:iso-conv} since the group $J_\infty$ is closed by construction.
  
  To prove Equation \eqref{eq:G-eq}  first note that $G_\infty$ is a subgroup of $\smallIso(\A_\infty, \Hilbert_\infty, \Dirac_\infty  )$ by definition. Next, $G_\infty$ is closed in $\KantorovichDist{\Dirac_\infty}$. To prove that, let $\alpha_n$ be a sequence in $G_\infty$ which converges to $\alpha $ for $\KantorovichDist{\Dirac_\infty}$. Firstly, notice that, for all $k,n \in \N $ we have  $\alpha_n(\A_k) \subseteq \A_k, $ which implies $\alpha(\A_k) \subseteq \A_k $ since $\A_k$ is closed. Now, for all $n \in \N$, there exists a unitary $U_n$  on $\Hilbert_\infty$ such that $\alpha_n = \AdRep{U_n}$ with $[U_n, \Dirac_\infty]=0$ and $U_n \Hilbert_k \subseteq \Hilbert_k$ for all $k\in \N$. By the proof of Theorem \eqref{compact-iso-group-thm}, there exists a subsequence of $U_n$ which converges in the SOT to some unitary $U$ on $\Hilbert_\infty$ such that $\alpha= \AdRep{U}$ and $[\Dirac_\infty, U]=0$. Since $\Hilbert_k$ is closed for all $k
  \in \N$, then $U\Hilbert_k \subseteq \Hilbert_k.$ So $\alpha \in G_\infty. $ By applying Theorem \eqref{thm:iso-conv} we are done. 
\end{proof}

\subsection{Almost Inners}

A key observation from the previous section is that our convergence results involve quantum isometries on factors of an inductive limit which can somehow be extended to the inductive limit as quantum isometries.

In general, this is of course a delicate problem: automorphisms do not typically extend from subalgebras to the entire algebra. There are however simple cases where such extensions are almost possible: inner automorphisms obviously extend.

We will now see that the quantum metric structure allows to find other automorphisms which extend from subalgebras to the entire algebra.

\begin{theorem}\label{extending-auto-thm}
  Let $(\A,\Lip)$ be a {\qcms}, and let $\B\subseteq\A$ be a C*-subalgebra of $\A$ with $1\in \B$. Let $K > 0$. If $\beta \in \Aut{\B}$ is a $\ast$-automorphism of $\B$ which is the pointwise limit of $\ast$-automorphisms of $\B$, each of which extends to $\A$ as a $K$-bi-Lipschitz morphism, then there exists a $K$-bi-Lipschitz $\ast$-automorphism $\alpha \in \Aut{\A}$ whose restriction to $\B$ is $\beta$.
\end{theorem}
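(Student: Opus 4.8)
The plan is to deduce the statement directly from the compactness of the set $\Aut{\A,\Lip,K}$ of $K$-bi-Lipschitz $\ast$-automorphisms of $\A$, established in Theorem (\ref{compact-Iso-thm}). By hypothesis, there is a sequence $(\beta_n)_{n\in\N}$ of $\ast$-automorphisms of $\B$ converging pointwise to $\beta$, and for each $n\in\N$ a $K$-bi-Lipschitz $\ast$-automorphism $\alpha_n\in\Aut{\A,\Lip,K}$ with $(\alpha_n)_{|\B}=\beta_n$. Thus $(\alpha_n)_{n\in\N}$ is a sequence in $\Aut{\A,\Lip,K}$, which is compact for the metric $\KantorovichDist{\Lip}$ by Theorem (\ref{compact-Iso-thm}). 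Since $\KantorovichDist{\Lip}$ is a metric, there is a subsequence $(\alpha_{n_k})_{k\in\N}$ converging for $\KantorovichDist{\Lip}$ to some $\alpha\in\Aut{\A,\Lip,K}$; by Theorem (\ref{KantorovichDist-thm}) this is equivalent to pointwise convergence on $\A$. In particular, $\alpha$ is a $K$-bi-Lipschitz $\ast$-automorphism of $\A$.

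It then remains only to identify the restriction of $\alpha$ to $\B$. Fix $b\in\B$. As $(\beta_n)_{n\in\N}$ converges pointwise to $\beta$, so does the subsequence $(\beta_{n_k})_{k\in\N}$, and therefore
\[
  \alpha(b) = \lim_{k\to\infty}\alpha_{n_k}(b) = \lim_{k\to\infty}\beta_{n_k}(b) = \beta(b),
\]
where the middle equality holds because $\alpha_{n_k}$ extends $\beta_{n_k}$. Hence $\alpha_{|\B}=\beta$, which establishes the theorem.

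I do not expect a genuine obstacle here: the delicate content is entirely packaged inside Theorem (\ref{compact-Iso-thm}), which already guarantees that a $\KantorovichDist{\Lip}$-limit of $K$-bi-Lipschitz $\ast$-automorphisms of $\A$ is again a $\ast$-automorphism, is again $K$-bi-Lipschitz, and maps $\dom{\Lip}$ onto itself. The only point worth emphasizing is that the subalgebra $\B$ need not carry any compatibility with the quantum metric of $\A$ --- it is merely a unital C*-subalgebra --- since all of the metric control is supplied by the extensions $\alpha_n$ on the $\A$ side; this is precisely what will make the theorem applicable to the inner automorphisms considered in the next subsection.
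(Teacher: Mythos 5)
Your proposal is correct and follows essentially the same route as the paper: extract a $\KantorovichDist{\Lip}$-convergent subsequence of the extensions $\alpha_n$ using the compactness of $\Aut{\A,\Lip,K}$ from Theorem (\ref{compact-Iso-thm}), and identify the restriction of the limit to $\B$ with $\beta$ via pointwise convergence. Your write-up is in fact slightly more explicit than the paper's on the last step.
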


\begin{proof}
  Let $(\beta_n)_{n\in\N}$ be a sequence of $\ast$-automorphisms of $\B$ converging pointwise to $\beta$. By assumption, for each $n\in\N$, there exists a $K$-bi-Lipschitz automorphism $\alpha_n$ of $(\A,\Lip)$ whose restriction to $\B$ is $\alpha_n$. By Theorem (\ref{compact-Iso-thm}), the set of $K$-bi-Lipschitz automorphisms of $(\A,\Lip)$ is compact for the topology of pointwise convergence, so there exists a subsequence $(\alpha_{f(n)})_{n\in\N}$ which converges pointwise to a $K$-bi-Lipschitz automorphism $\alpha$ of $\A$. By construction, $\alpha$ restricts to $\beta$ on $\B$.
\end{proof}

We can apply Theorem (\ref{extending-auto-thm}) to inner automorphism quantum isometries. First, we make a quick digression about extending almost inner automorphisms.

\begin{corollary}
  Let $(\A,\Lip)$ be a {\qcms}, and let $\B\subseteq\A$ be a C*-subalgebra of $\A$ with $1\in \B$. Let $K > 0$. We define the set
  \begin{equation*}
    \mathscr{U}(\B|\A,\Lip,K) \coloneqq \{ u \in \mathcal{U}(\B) : \forall a \in \dom{\Lip} \quad \Lip(u a u^\ast)\leq K \Lip(a) \} \text.
  \end{equation*}

  If $\beta \in\Aut{\B}$ is the pointwise limit of inner automorphisms implemented as $\AdRep{u}$ for $u \in \mathscr{U}(\B)$, then there exists a $K$-bi-Lipschitz automorphism of $\A$ whose restriction to $\B$ is $\beta$.
\end{corollary}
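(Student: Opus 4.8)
The plan is to reduce the statement directly to Theorem~(\ref{extending-auto-thm}). The key point is that if $u \in \mathscr{U}(\B|\A,\Lip,K)$, then the inner automorphism $\AdRep{u}$ of $\B$ extends to the inner automorphism $\AdRep{u}$ of $\A$ (here using that $1 \in \B \subseteq \A$, so $u$ is a unitary of $\A$ as well), and by the very definition of $\mathscr{U}(\B|\A,\Lip,K)$, this extension satisfies $\Lip(uau^\ast) \leq K\Lip(a)$ for all $a \in \dom{\Lip}$.

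First I would check that $\AdRep{u}$ is actually $K$-bi-Lipschitz on $(\A,\Lip)$, not merely Lipschitz: since $u^\ast \in \mathscr{U}(\B|\A,\Lip,K)$ as well (because $\Lip(u^\ast a u) = \Lip(u^\ast a (u^\ast)^\ast)$ and applying the defining inequality to the element $u^\ast a u$ shows $\frac{1}{K}\Lip(a) = \frac{1}{K}\Lip(u(u^\ast a u)u^\ast) \leq \Lip(u^\ast a u)$, hence combining with $\Lip(u^\ast a u) \leq K \Lip(a)$), one gets $\frac{1}{K}\Lip(a) \leq \Lip(uau^\ast) \leq K\Lip(a)$ for all $a \in \dom{\Lip}$; one also checks $\AdRep{u}(\dom{\Lip}) = \dom{\Lip}$ from $\AdRep{u}(\dom{\Lip}) \subseteq \dom{\Lip}$ and $\AdRep{u^\ast}(\dom{\Lip}) \subseteq \dom{\Lip}$. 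Thus each $\AdRep{u}$ with $u \in \mathscr{U}(\B|\A,\Lip,K)$ is a $K$-bi-Lipschitz $\ast$-automorphism of $(\A,\Lip)$ whose restriction to $\B$ is the inner automorphism $\AdRep{u}$ of $\B$.

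Then I would simply invoke the hypothesis: $\beta$ is the pointwise limit of a sequence $(\AdRep{u_n})_{n\in\N}$ of $\ast$-automorphisms of $\B$ with $u_n \in \mathscr{U}(\B)$ — here I would note that, to make the argument work, the $u_n$ must actually lie in $\mathscr{U}(\B|\A,\Lip,K)$ rather than all of $\mathscr{U}(\B)$, which I expect is the intended reading of the statement (the set $\mathscr{U}(\B|\A,\Lip,K)$ was just defined for precisely this purpose). Each $\AdRep{u_n}$ therefore extends to a $K$-bi-Lipschitz $\ast$-automorphism of $(\A,\Lip)$ by the previous paragraph. Applying Theorem~(\ref{extending-auto-thm}) with this sequence $(\AdRep{u_n})_{n\in\N}$ of automorphisms of $\B$, each extending to $\A$ as a $K$-bi-Lipschitz morphism, yields a $K$-bi-Lipschitz $\ast$-automorphism $\alpha \in \Aut{\A}$ whose restriction to $\B$ is $\beta$, which is exactly the claim.

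The main obstacle, and the only genuinely non-mechanical point, is the interpretation of $\mathscr{U}(\B)$ versus $\mathscr{U}(\B|\A,\Lip,K)$ in the hypothesis: if one literally takes $u \in \mathscr{U}(\B)$ with no control over $\Lip(uau^\ast)$, then $\AdRep{u}$ need not extend to a $K$-bi-Lipschitz (or even Lipschitz) automorphism of $\A$ and the conclusion fails, so the statement should be read with $u_n \in \mathscr{U}(\B|\A,\Lip,K)$. Everything else is a routine application of Theorem~(\ref{compact-Iso-thm}) as packaged in Theorem~(\ref{extending-auto-thm}); no new estimates are needed.
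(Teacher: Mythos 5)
Your proposal follows exactly the paper's route: the paper's own proof is the two-line observation that inner automorphisms of $\B$ implemented by $u \in \mathscr{U}(\B|\A,\Lip,K)$ extend to $\A$ as inner, $K$-bi-Lipschitz automorphisms, followed by an appeal to Theorem (\ref{extending-auto-thm}); and your reading of the hypothesis --- that the unitaries should be taken in $\mathscr{U}(\B|\A,\Lip,K)$ rather than in all of $\mathcal{U}(\B)$ --- is precisely the reading the paper's proof adopts.

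The one point to flag is that your parenthetical verification that $u^\ast \in \mathscr{U}(\B|\A,\Lip,K)$ is circular. Applying the defining inequality to the element $u^\ast a u$ gives $\Lip(a) = \Lip\left(u(u^\ast a u)u^\ast\right) \leq K\,\Lip(u^\ast a u)$, which is the \emph{lower} bound for $\AdRep{u^\ast}$; but your step ``combining with $\Lip(u^\ast a u)\leq K\Lip(a)$'' assumes exactly the membership of $u^\ast$ you are trying to establish. From $u\in\mathscr{U}(\B|\A,\Lip,K)$ alone one obtains the upper bound for $\AdRep{u}$ and the lower bound for $\AdRep{u^\ast}$, and the two missing halves (equivalently, $K$-bi-Lipschitzness of $\AdRep{u}$) do not follow formally from the one-sided defining condition. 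To be fair, the paper's proof is equally silent here --- it simply asserts the inner extension is $K$-bi-Lipschitz --- and in the intended applications (e.g.\ the Remark that follows the corollary, where membership is forced by a bound on $\Lip(u)=\Lip(u^\ast)$, or the finite-dimensional case) the set $\mathscr{U}(\B|\A,\Lip,K)$ is closed under adjoints, so both bounds do hold. A complete write-up should either assume this closure under adjoints explicitly or restrict to unitaries for which it is automatic; with that repair, the rest of your argument is correct and identical to the paper's.
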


\begin{proof}
  By assumption, every inner automorphism of the form $\AdRep{u}$ for $u \in \mathscr{U}(\B|\A,\Lip,K)$ extends to $\A$ (as it is inner) to a $K$-bi-Lipschitz automorphism. We then apply Theorem (\ref{extending-auto-thm}).
\end{proof}

\begin{remark}
  Assume that $\Lip$ is a seminorm defined on a dense subalgebra of the unital C*-algebra $\A$, such that $(\A,\Lip_{|\sa{\A}})$ is a {\qcms}, and for all $a,b \in \dom{\Lip}$, we have $\Lip(ab) \leq \Lip(a)\norm{b}{\A} + \norm{a}{\A} \Lip(b)$ --- for instance, $\Lip(a)=\opnorm{[\Dirac,a]}{}{}$ for all $a\in\A$ such that $a\,\dom{\Dirac}\subseteq\dom{\Dirac}$ and $[\Dirac,a]$ is bounded, where $(\A,\Hilbert,\Dirac)$ is a metric spectral triple. Assume that $\Lip(a)=\Lip(a^\ast)$ for all $a\in\ \dom{\Lip}$. Let $\B\subseteq\A$, $K > 0$, and $u$ be a unitary in $\B$ such that $\Lip(u) \leq \frac{K}{2\qdiam{\A}{\Lip} K + 1}$. Fix $\mu \in \StateSpace(\A)$. Then for all $a\in\dom{\Lip}$, we obtain
  \begin{equation*}
    \Lip(u a u^\ast) = \Lip(u (a-\mu(a)) u^\ast) \leq 2\norm{a-\mu(a)}{\A} \Lip(u) + \Lip(a) \leq (2\qdiam{\A}{\Lip} K + 1)\Lip(a) \text.
  \end{equation*}
  Thus, $\left\{ u \in \mathscr{U}(\B) : \Lip(u)\leq \frac{K}{2 \qdiam{\A}{\Lip} K + 1} \right\} \subseteq \mathscr{U}(\B|\A,\Lip,K) \text.$

  If in particular $\B$ is finite dimensional, then $\Lip$ is equivalent to the quotient norm on $\faktor{\B}{\C}$, and thus there exists $R>0$ such that $\Lip(u)\leq R$ for all unitary $u \in \B$, hence $\mathscr{U}(\B|\A,\Lip,2\qdiam{\A}{\Lip}R+1)$ is in fact the entire unitary group of $\B$.
\end{remark}

We now can put our observations together to get a convergence result for certain groups of essentially inner quantum isometries.
Indeed, the  following corollary is a consequence of Theorem \eqref{extending-auto-thm} in combination with other  results proven in this section.

\begin{corollary}
    For each $n\in\Nbar \coloneqq \N\cup\{\infty\}$, let $(\A_n,\Lip_n)$ be a {\qcms}, such that $\A_\infty = \closure{\bigcup_{n\in\N} \A_n}$, where $(\A_n)_{n\in\N}$ is an increasing (for $\subseteq$) sequence of C*-subalgebras of $\A_\infty$, with the unit of $\A_\infty$ in $\A_0$. Assume that the identity automorphism of $\A_\infty$ is also a bridge builder. 
  If we define, for all $n\in\N$,
  \begin{multline*}
    H_n \coloneqq \Big\{ \AdRep{u} \in \LargeIso(\A_\infty,\Lip_\infty) : u \in \mathscr{U}(\A_\infty), \\ \forall k\in\{0,1,\ldots,n-1\} \quad \forall a\in\dom{\Lip_k} \quad \Lip_k(u a u^\ast)=\Lip_k(a)  \Big\} \text.
  \end{multline*}
  then
  \begin{equation*}
    \lim_{n\rightarrow\infty}\covpropinquity{}\left(\left(\A_n,\Lip_n,\closure{H_n}\right),\left(\A_\infty,\Lip_\infty,\closure{\bigcup_{n\in\N}H_n}\right)\right) = 0\text,
  \end{equation*}
  where the closure $\closure{\cdot}$ is taken in the topology of pointwise convergence.
\end{corollary}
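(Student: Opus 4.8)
The plan is to exhibit this corollary as a special case of Theorem (\ref{thm:iso-conv}) (equivalently, of Corollary (\ref{id-cov-cor})) with $\pi$ equal to the identity automorphism of $\A_\infty$, which by hypothesis is a bridge builder and is trivially a full quantum isometry commuting with every $\ast$-automorphism. Set $G_\infty \coloneqq \closure{\bigcup_{n\in\N}H_n}$, the closure being taken in $\LargeIso(\A_\infty,\Lip_\infty)$ for $\KantorovichDist{\Lip_\infty}$. First I would take care of the bookkeeping: each $H_n$ is a subgroup of $\LargeIso(\A_\infty,\Lip_\infty)$, since the product and the adjoint of unitaries of $\A_\infty$ are again unitaries of $\A_\infty$, and the equalities $\Lip_k(uau^\ast)=\Lip_k(a)$ are stable under composition and inversion of the corresponding inner automorphisms because the full quantum isometries of $(\A_k,\Lip_k)$ form a group. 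Hence $G_\infty$ is a closed subgroup of the compact group $\LargeIso(\A_\infty,\Lip_\infty)$ of Corollary (\ref{cor:ISO-compact}), so it is itself a compact group; and by lower semicontinuity of each $\Lip_k$ — exactly the argument used at the end of the proof of Theorem (\ref{compact-Iso-thm}) — every element of $G_\infty$ still restricts to a full quantum isometry of each relevant $(\A_k,\Lip_k)$, so $G_\infty$ is a closed subgroup of $\LargeIso(\A_\infty,\Lip_\infty|\mathrm{id})$.

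Next I would identify the approximating groups. For each $n$, restriction to $\A_n$ maps $G_\infty$ into $\LargeIso(\A_n,\Lip_n)$, and conversely the ``almost inner'' results of this section are precisely what is needed to see that $G_n \coloneqq \{\alpha_{|\A_n}:\alpha\in G_\infty\}$ coincides with the group $\closure{H_n}$ of the statement, viewed as acting on $\A_n$. The inclusion $G_n\subseteq\closure{H_n}$ follows from continuity of restriction for the Monge-Kantorovich metrics together with closedness of $\closure{H_n}$; for the reverse inclusion, any element of $\closure{H_n}$ is by construction a pointwise limit of inner automorphisms each of which extends to a full (that is, $1$-bi-Lipschitz) quantum isometry of $\A_\infty$, so Theorem (\ref{extending-auto-thm}) and its corollary on inner automorphisms produce an extension to a full quantum isometry of $\A_\infty$; and since the set of $1$-bi-Lipschitz automorphisms of $\A_\infty$ is compact by Theorem (\ref{compact-Iso-thm}), this extension can be taken to be a pointwise limit of the given global extensions, hence to lie in $G_\infty$ and to be compatible with every factor $\A_k$. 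Thus $G_n=\closure{H_n}$ and each $G_n$ is a closed (compact) group.

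With $G_\infty$ a closed subgroup of $\LargeIso(\A_\infty,\Lip_\infty|\mathrm{id})$, the groups $G_n$ realized as its restrictions, and $\mathrm{id}$ a bridge builder which is a full quantum isometry, Theorem (\ref{thm:iso-conv}) applies verbatim and yields
\[
\lim_{n\rightarrow\infty}\covpropinquity{}((\A_n,\Lip_n,G_n),(\A_\infty,\Lip_\infty,G_\infty))=0,
\]
which is the assertion. I expect the main obstacle to be the middle step: the reverse inclusion $\closure{H_n}\subseteq\{\alpha_{|\A_n}:\alpha\in G_\infty\}$, for which the extension machinery of Theorem (\ref{extending-auto-thm}) is essential and must be combined with the compactness of the set of $1$-bi-Lipschitz automorphisms of $\A_\infty$ (Theorem (\ref{compact-Iso-thm})) in order to guarantee that the chosen extension is compatible with all the factors $\A_k$, and not merely with $\A_0,\dots,\A_{n-1}$; everything else is routine verification of the hypotheses of the convergence theorems of Section (\ref{sec:prop-isom-gps}).
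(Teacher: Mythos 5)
Your proposal is correct and follows essentially the same route as the paper: the paper's proof likewise sets $G_\infty=\closure{\bigcup_{n\in\N}H_n}$ and $G_n=\closure{H_n}$, invokes Theorem (\ref{extending-auto-thm}) (via the fact that inner automorphisms of a subalgebra trivially extend) to identify $G_n$ with the elements of $G_\infty$ compatible with $(\A_n,\Lip_n)$, and then concludes by Corollary (\ref{id-cov-cor}) since the identity is a bridge builder. The only difference is that you spell out the bookkeeping (that each $H_n$ is a group, that $G_\infty$ is compact, and that the extension produced by Theorem (\ref{extending-auto-thm}) can be chosen compatible with all factors) which the paper compresses into a single sentence.
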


\begin{proof}
  Setting $G_\infty \coloneqq \closure{\bigcup_{n\in\N}H_n}$ and $G_n \coloneqq \closure{H_n}$ for all $n\in\N$, Theorem (\ref{extending-auto-thm}) proves that $G_n = \left\{ \alpha\in G_\infty : \alpha_{\A_n} \in \LargeIso(\A_n,\Lip_n) \right\}$ since inner automorphisms of $\A_n$ obviously extend to $\A_\infty$. Since the identity is a bridge builder, our result now follows from Corollary (\ref{id-cov-cor}).
\end{proof}

\subsection{AF Algebras with Faithful Traces}

Antonescu/Ivan and Christensen constructed in \cite{CI} a metric spectral triple on AF algebras. We now apply some of our results  to  prove the convergence of isometry groups of  their spectral triple. We begin with a description of the setup of \cite{CI}.

Let $\A = \closure{\bigcup_{n\in\N}\A_n}$ be a unital C*-algebra arising as the closure of the union of an increasing union of finite dimensional C*-subalgebras --- i.e., $\A$ is an AF algebra. We assume for convenience that $\A_0 = \C 1$ where $1$ is the unit of $\A$. Let $\varphi \in \StateSpace(\A)$ be a \emph{faithful} state of $\A$, and let $L^2(\A,\varphi)$ be the GNS Hilbert space obtained by completion of $\A$ for the inner product $a,b\in\A\mapsto\varphi(b^\ast a)$. Of course, $\A$ acts by left multiplication on $L^2(\A,\varphi)$. We write $\Omega$ for the unit $1$ of $\A$ when seen as a vector in $L^2(\A,\varphi)$, which is then a cyclic and separating vector for $\A$.

For each $n\in\N$, we identify $L^2(\A_n,\varphi)$ with the closure of $\A_n\Omega$ in $L^2(\A,\varphi)$, and we denote the orthogonal projection of $L^2(\A,\varphi)$ onto $L^2(\A_n,\varphi)$ by $P_n$. For all $a\in \A$, let $\mathds{E}_n(a)$ be defined by: $\forall a \in \A_n \quad \mathds{E}_n(a)\Omega \coloneqq P_n (a\Omega)$ --- this indeed is well-defined since $\Omega$ is separating. Since $\A_n$ is finite dimensional, it agrees, as a vector space, with $L^2(\A_n,\varphi)$, and thus $\mathds{E}_n(a) \in \A_n$. It is easy to check that $\mathds{E}_n$ thus defined is a continuous linear function. In fact, when $\varphi$ is also a trace, then $\mathds{E}_n$ is the unique conditional expectation from $\A_\infty$ onto $\A_n$ such that $\varphi\circ \mathds{E}_n = \varphi$ (see \cite[Proposition 2.6.2]{GHJ}).

\medskip

We now follow the construction of a spectral triple on AF algebras given in \cite{CI}. For each $n\in\N\setminus\{0\}$, we now let $Q_n \coloneqq P_{n} - P_{n-1}$; of course $Q_n$ is a projection since $P_n$ and $P_{n-1}$ commute and by construction, $P_n\geq P_{n-1}$. Now fix $a \in \A_n$, so $a\Omega \in L^2(\A_n,\varphi)$. By construction, $P_m (a\Omega) = a\Omega$, and thus $Q_m (a\Omega) = 0$ for all $m>n$. By continuity of $Q_m$, we thus conclude that $Q_m L^2(\A_n,\varphi) = \{ 0 \}$ for all $m>n$. This allows us to define the following operator.

For any strictly increasing sequence $\lambda\coloneqq(\lambda_n)_{n\in\N}$ of positive real numbers, we define
\begin{equation}\label{AF-Dirac-eq}
\forall \xi \in \bigcup_{n\in\N} L^2(\A_n,\varphi) \quad D_{\lambda}\xi \coloneqq \sum_{n=0}^\infty \lambda_n Q_n \text{ on }\bigcup_{n\in\N} L^2(\A_n,\varphi) \text,
\end{equation}
noting that since $(L^2(\A_n,\varphi))_{n\in\N}$ is increasing for inclusion, the set $\bigcup_{n\in\N} L^2(\A_n,\varphi)$ is a subspace of $L^2(\A,\varphi)$. Since $\bigcup_{n\in\N} L^2(\A_n,\varphi)$ is dense in $L^2(\A,\varphi)$, the operator $D_\lambda$ is defined on a dense subspace. It is obviously symmetric, by construction, and in fact, if
\begin{equation*}
T = \sum_{n=1}^\infty((\lambda_n+i)^{-1})Q_n
\end{equation*}
where the series converge in norm, then $T$ is bounded and $T (D_\lambda+i) \xi = \xi$ for all $\xi \in \bigcup_{n\in\N}L^2(\A_n,\varphi)$. Therefore, $D_\lambda$ is essentially self-adjoint. Let $\Dirac_\lambda$ be the closure of $D_\lambda$. It is then easy to see that $\Dirac_\lambda$ is a self-adjoint operator and $T$ is the inverse of $\Dirac_\lambda+i$. Since $T$ is compact, $\Dirac_\lambda$ has compact resolvent.

\medskip

Now fix $a \in \A_n$. We note that $a$ commutes with $P_k$ for $k\geq n$ by construction, since $a b \in \A_k$ for all $b\in \A_k$ with $k \geq n$. In turn, this shows that
\begin{align}
[\Dirac_\lambda,a]
&= \sum_{j=0}^\infty \lambda_j [Q_j,a] = \sum_{j=0}^n \lambda_j [Q_j,a] \label{D-infinity-D-n-eq} \\
&= P_k\left( \sum_{j=0}^\infty [Q_j,a] \right) P_k = P_k [\Dirac_\lambda,a] P_k \text.
\end{align}

From this, we see that $\{a\in\A_\infty : a\,\dom{\Dirac_\lambda}\subseteq\dom{\Dirac_\lambda}\}$ is dense in $\A_\infty$, as it contains $\bigcup_{n\in\N}\A_n$, and thus $(\A_\infty,L^2(\A_\infty,\varphi),\Dirac_\lambda)$ is a spectral triple. Moreover, so is $(\A_n,L^2(\A_n,\varphi),\Dirac_n)$ where $\Dirac_n$ is the restriction of $\Dirac_\infty$ to $L^2(\A_n,\varphi)$, which is now a bounded self-adjoint operator. Moreover, by Equation (\ref{D-infinity-D-n-eq}), we have for all $a\in\A_n$:
\begin{equation*}
\opnorm{[\Dirac_n,a]}{}{L^2(\A_n,\varphi)} = \opnorm{[\Dirac_\infty,a]}{}{L^2(\A_\infty,\varphi)} \text.
\end{equation*}

\medskip

Not all choices of a sequence $\lambda$ as above leads to a metric spectral triple. The following result addresses this matter in \cite{CI}.

\begin{theorem}[Theorem 2.1]
	There exists a strictly increasing sequence $\lambda\coloneqq (\lambda_n)_{n\in\N}$ of positive numbers, and a summable sequence $(\beta_n)_{n\in\N}$, such that 
	\begin{equation}\label{E-beta-eq}
	\forall a \in \sa{\A_\infty} \qquad \norm{ \mathds{E}_n(a) - \mathds{E}_{n+1} (a)}{\A} < \beta_n \opnorm{[\Dirac_\lambda,a]}{}{L^2(\A_\infty,\varphi)} \text.
	\end{equation}
	
	Consequently, for this choice of sequence $\lambda$, the spectral triples $\left(\A_n,L^2(\A_n,\varphi),(\Dirac_\lambda)_{|L^2(\A_n,\varphi)}\right)$ are metric for all $n\in \N\cup\{\infty\}$.
\end{theorem}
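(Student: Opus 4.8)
Here is how I would go about proving this statement (reconstructing the Christensen–Ivan argument in the notation just set up).

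\emph{Key identity and the Hilbert-space estimate.} The plan is to read off $\mathds{E}_{n+1}(a)-\mathds{E}_n(a)$ from the cyclic separating vector $\Omega$. Since $Q_{n+1}=P_{n+1}-P_n$ and $\Omega\in L^2(\A_0,\varphi)$ forces $P_n\Omega=P_{n+1}\Omega=\Omega$, hence $Q_{n+1}\Omega=0$, one gets for every $a\in\sa{\A_\infty}$
\[
\bigl(\mathds{E}_{n+1}(a)-\mathds{E}_n(a)\bigr)\Omega=(P_{n+1}-P_n)(a\Omega)=Q_{n+1}a\Omega=[Q_{n+1},a]\Omega.
\]
Next, $\Omega$ lies in the range of $Q_0=P_0$, so $\Dirac_\lambda\Omega=\lambda_0\Omega$, and therefore whenever $[\Dirac_\lambda,a]$ is bounded one has $[\Dirac_\lambda,a]\Omega=\Dirac_\lambda(a\Omega)-\lambda_0\,a\Omega=\sum_j(\lambda_j-\lambda_0)Q_j a\Omega$; applying $Q_{n+1}$ collapses this to $Q_{n+1}[\Dirac_\lambda,a]\Omega=(\lambda_{n+1}-\lambda_0)Q_{n+1}a\Omega$. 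Using $\norm{\Omega}{L^2(\A_\infty,\varphi)}=1$, this gives
\[
\norm{\bigl(\mathds{E}_{n+1}(a)-\mathds{E}_n(a)\bigr)\Omega}{L^2(\A_\infty,\varphi)}\leq\frac{1}{\lambda_{n+1}-\lambda_0}\opnorm{[\Dirac_\lambda,a]}{}{L^2(\A_\infty,\varphi)}.
\]

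\emph{Passing to the C*-norm and choosing $\lambda$.} The element $\mathds{E}_{n+1}(a)-\mathds{E}_n(a)$ lies in the finite-dimensional algebra $\A_{n+1}$; as $\Omega$ is separating, the map $b\mapsto b\Omega$ is injective and linear on $\A_{n+1}$, hence bounded below, so there is a finite constant $C_{n+1}$ depending only on $\A_{n+1}$ and $\varphi$ — crucially \emph{not} on $\lambda$ — with $\norm{b}{\A}\leq C_{n+1}\norm{b\Omega}{L^2(\A_\infty,\varphi)}$ for all $b\in\A_{n+1}$. Combined with the previous estimate this yields
\[
\norm{\mathds{E}_n(a)-\mathds{E}_{n+1}(a)}{\A}\leq\frac{C_{n+1}}{\lambda_{n+1}-\lambda_0}\opnorm{[\Dirac_\lambda,a]}{}{L^2(\A_\infty,\varphi)}.
\]
I would then build $\lambda$ recursively: set $\lambda_0=1$, and once $\lambda_0<\dots<\lambda_n$ are chosen (so $C_{n+1}$ is already determined) pick $\lambda_{n+1}>\lambda_n$ with $\frac{C_{n+1}}{\lambda_{n+1}-\lambda_0}<2^{-(n+1)}=:\beta_n$. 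Then $(\beta_n)_n$ is summable and the displayed bound is strictly dominated by $\beta_n\opnorm{[\Dirac_\lambda,a]}{}{L^2(\A_\infty,\varphi)}$ as soon as $a$ is not a scalar — which is exactly \eqref{E-beta-eq}.

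\emph{The ``metric'' conclusion.} For $n=\infty$ I would verify Rieffel's compactness criterion \cite[Theorem~1.9]{Rieffel98a} with the faithful state $\mu=\varphi$. For $a\in\sa{\A_\infty}$ with $\opnorm{[\Dirac_\lambda,a]}{}{L^2(\A_\infty,\varphi)}\leq1$ and $\varphi(a)=0$ one has $\mathds{E}_0(a)=\varphi(a)1=0$, so $\mathds{E}_N(a)=\sum_{n=0}^{N-1}\bigl(\mathds{E}_{n+1}(a)-\mathds{E}_n(a)\bigr)$ telescopes, the series $\sum_n\bigl(\mathds{E}_{n+1}(a)-\mathds{E}_n(a)\bigr)$ converges absolutely in $\A_\infty$ with terms bounded by $\beta_n$, and its sum $b$ satisfies $b\Omega=\lim_N P_N(a\Omega)=a\Omega$, whence $b=a$; thus $\norm{a}{\A}\leq\sum_n\beta_n$ and $\norm{a-\mathds{E}_N(a)}{\A}\leq\sum_{n\geq N}\beta_n\to0$ uniformly over such $a$. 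Since each $\mathds{E}_N(a)$ then lies in a fixed norm ball of the finite-dimensional algebra $\A_N$, a routine $\varepsilon$-net argument shows $\{a\in\dom{\Lip_{\Dirac_\lambda}}:\Lip_{\Dirac_\lambda}(a)\leq1,\ \varphi(a)=0\}$ is totally bounded, hence precompact in $\sa{\A_\infty}$. Together with the Leibniz inequality for $\Lip_{\Dirac_\lambda}$ (a fortiori the fixed quasi-Leibniz property), the lower semicontinuity of $\Lip_{\Dirac_\lambda}$, and the fact that $[\Dirac_\lambda,a]=0$ forces $a\Omega\in\C\Omega$ and so $a\in\R 1$, this shows $(\A_\infty,\Lip_{\Dirac_\lambda})$ is a {\qcms}. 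For finite $n$: since $\sa{\A_n}\subseteq\dom{\Lip_{\Dirac_\lambda}}$ and $\A_n$ is a unital C*-subalgebra of $\A_\infty$, the restriction of $\Lip_{\Dirac_\lambda}$ to $\sa{\A_n}$ again defines a {\qcms} (its relevant compact set is a closed subset of the one for $\A_\infty$), and by Equation \eqref{D-infinity-D-n-eq} this restriction equals $\Lip_{\Dirac_n}$; hence each $(\A_n,L^2(\A_n,\varphi),\Dirac_n)$ is metric.

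\emph{Main obstacle.} The substantive step is the passage from the GNS inner-product norm back to the C*-norm of $\A$: this is the only place where the finite-dimensionality of the $\A_n$ is genuinely used (in the non-tracial case $\mathds{E}_n$ need not even be norm-contractive), and one must be careful that the comparison constants $C_{n+1}$ depend only on $(\A_{n+1},\varphi)$, so that the recursive choice of the sequence $\lambda$ is truly non-circular. Everything else is bookkeeping with the mutually orthogonal projections $Q_j$ and with the summability of $(\beta_n)_n$.
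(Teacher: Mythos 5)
Your proposal is correct, and it is essentially the Christensen--Ivan argument: the paper itself gives no proof of this statement, importing it verbatim as Theorem~2.1 of \cite{CI}, and your reconstruction (the identity $(\mathds{E}_{n+1}(a)-\mathds{E}_n(a))\Omega=Q_{n+1}a\Omega=(\lambda_{n+1}-\lambda_0)^{-1}Q_{n+1}[\Dirac_\lambda,a]\Omega$, the finite-dimensional comparison constants $C_{n+1}$ independent of $\lambda$, the recursive choice of $\lambda_{n+1}$, and the telescoping/total-boundedness verification of Rieffel's criterion) is exactly the intended route. The only caveat, which you correctly flag and which is a defect of the statement rather than of your proof, is that the strict inequality fails for $a\in\R 1$, where both sides vanish.
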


Since we have constructed metric spectral triples, we have constructed {\qcms s} and we can ask about their convergence for the propinquity. In fact, we now prove a convergence result for the \emph{spectral propinquity} $\spectralpropinquity{}$ of the spectral triples, which implies the convergence of the underlying {\qcms s}. 

Moreover, by \cite[Theorem 3.17]{FaLaPa}, to show convergence, it is sufficient to find a bridge builder which is also a full quantum isometry.

We refer to \cite{Latremoliere18g} for the definition of the spectral propinquity, and we refer to \cite[Theorem 3.17]{FaLaPa} to see that, when working in the context of AF algebras, it is sufficient for the following result to find a bridge builder which is also a full quantum isometry. 

\begin{theorem}\label{conv-AF}
	Let $\A=\closure{\bigcup_{n\in\N}\A_n}$ be a unital C*-algebra arising as  the union of an increasing sequence of finite dimensional C*-subalgebras $\A_n$ with $\A_0 = \C1$. Let $\varphi \in \StateSpace(\A)$ be a faithful state of $\A$.
	
	If $\lambda \coloneqq (\lambda_n)_{n\in\N}$ is a strictly increasing sequence of positive real numbers such that $(\A_\infty,L^2(\A_\infty,\varphi),\Dirac_\lambda)$ is a metric spectral triple, then, setting $\Dirac_n$ to be the restriction of $\Dirac_\lambda$ to $\dom{\Dirac_\lambda}\cap\Hilbert_n$, we have $(\A_n,\Hilbert_n,\Dirac_n)$ is a metric spectral triple, and
        \begin{equation*}
	\lim_{n\rightarrow\infty} \spectralpropinquity{}((\A_n,L^2(\A_n,\varphi),\Dirac_n),(\A_\infty,L^2(\A_\infty,\varphi),\Dirac_\lambda))) = 0 \text,
	\end{equation*}
	and therefore, in particular,
	\begin{equation*}
	\lim_{n\rightarrow\infty} \dpropinquity{}\left(\left(\A_n,\opnorm{[(\Dirac_n,\cdot]}{}{L^2(\A_n,\varphi)}\right), \left(\A_\infty, \opnorm{[\Dirac_\lambda,\cdot]}{}{L^2(\A_\infty,\varphi)}\right)\right) = 0 \text.
	\end{equation*}
\end{theorem}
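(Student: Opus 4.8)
The plan is to exhibit a single $\ast$-automorphism of $\A_\infty$ that is simultaneously a \emph{bridge builder} for $((\A_n,\Lip_n),(\A_\infty,\Lip_\infty))$ in the sense of Definition (\ref{bridge-builder-def}) --- where $\Lip_\infty = \opnorm{[\Dirac_\lambda,\cdot]}{}{L^2(\A_\infty,\varphi)}$ and $\Lip_n = \opnorm{[\Dirac_n,\cdot]}{}{L^2(\A_n,\varphi)}$ --- and a full quantum isometry of $(\A_\infty,\Lip_\infty)$. Granting this, \cite[Theorem 3.17]{FaLaPa}, as recalled just before the theorem, yields $\lim_{n\to\infty}\spectralpropinquity{}((\A_n,L^2(\A_n,\varphi),\Dirac_n),(\A_\infty,L^2(\A_\infty,\varphi),\Dirac_\lambda)) = 0$, and the asserted convergence for $\dpropinquity{}$ follows at once because the spectral propinquity dominates the propinquity of the associated {\qcms s}. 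That each $(\A_n,\Hilbert_n,\Dirac_n)$ is a metric spectral triple is part of the Christensen--Ivan setup recalled above; in any case $\Lip_n = \Lip_\infty$ on $\A_n$ by (\ref{D-infinity-D-n-eq}) and the displayed identity following it. The candidate automorphism is simply $\mathrm{id}_{\A_\infty}$, which is trivially a full quantum isometry since $\Lip_\infty\circ\mathrm{id}_{\A_\infty} = \Lip_\infty$, so the whole argument reduces to verifying the two approximation conditions of Definition (\ref{bridge-builder-def}) with $\pi = \mathrm{id}_{\A_\infty}$.

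The second condition is immediate: given $b\in\dom{\Lip_n}\subseteq\sa{\A_n}$, the element $b$ lies in $\dom{\Lip_\infty}$ (as $\A_n$ is finite dimensional), $\Lip_\infty(b) = \Lip_n(b)$ by (\ref{D-infinity-D-n-eq}), and $a\coloneqq b$ satisfies $\norm{\mathrm{id}_{\A_\infty}(a)-b}{\A_\infty} = 0$. For the first condition, fix $\varepsilon>0$ and $a\in\dom{\Lip_\infty}$; if $\Lip_\infty(a)=0$ then $a\in\R\cdot 1\subseteq\A_0\subseteq\A_n$ and $b\coloneqq a$ works, so assume $\Lip_\infty(a)>0$. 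I would take $b\coloneqq\mathds{E}_n(a)$, which lies in $\sa{\A_n}\subseteq\dom{\Lip_n}$, and prove the two inequalities $\Lip_n(\mathds{E}_n(a))\leq\Lip_\infty(a)$ and $\norm{a-\mathds{E}_n(a)}{\A_\infty}<\varepsilon\,\Lip_\infty(a)$ for all sufficiently large $n$.

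For the norm estimate, the Christensen--Ivan inequality (\ref{E-beta-eq}) gives $\norm{\mathds{E}_m(a)-\mathds{E}_{m+1}(a)}{\A_\infty}<\beta_m\,\Lip_\infty(a)$ for every $m$, so $(\mathds{E}_m(a))_{m\in\N}$ is Cauchy in $\A_\infty$ by summability of $(\beta_m)_{m\in\N}$; since $\mathds{E}_m(a)\Omega = P_m(a\Omega)\to a\Omega$ in $L^2(\A_\infty,\varphi)$, its norm limit must be $a$, so $\norm{a-\mathds{E}_n(a)}{\A_\infty}\leq(\sum_{m\geq n}\beta_m)\Lip_\infty(a)$, and one picks $N$ with $\sum_{m\geq N}\beta_m<\varepsilon$. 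For the Lipschitz inequality I would argue that $[\Dirac_\lambda,\mathds{E}_n(a)] = P_n[\Dirac_\lambda,a]P_n$: indeed, $\mathds{E}_n(a)\in\A_n$ commutes with $P_n$ and $\Dirac_\lambda$ commutes with $P_n$, so $[\Dirac_\lambda,\mathds{E}_n(a)] = P_n[\Dirac_\lambda,\mathds{E}_n(a)]P_n = \Dirac_\lambda\,(P_n\mathds{E}_n(a)P_n) - (P_n\mathds{E}_n(a)P_n)\,\Dirac_\lambda$ by (\ref{D-infinity-D-n-eq}); the module property $\mathds{E}_n(x)b = \mathds{E}_n(xb)$ for $b\in\A_n$ shows that the bounded operators $P_n\mathds{E}_n(a)P_n$ and $P_n\,a\,P_n$ agree on the dense subspace $\A_n\Omega$ of $L^2(\A_n,\varphi)$ and both vanish on its orthogonal complement, hence coincide; and then $[\Dirac_\lambda,\mathds{E}_n(a)] = \Dirac_\lambda\,(P_n a P_n) - (P_n a P_n)\,\Dirac_\lambda = P_n[\Dirac_\lambda,a]P_n$. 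Consequently $\Lip_n(\mathds{E}_n(a)) = \opnorm{[\Dirac_\lambda,\mathds{E}_n(a)]}{}{L^2(\A_\infty,\varphi)} = \opnorm{P_n[\Dirac_\lambda,a]P_n}{}{L^2(\A_\infty,\varphi)}\leq\opnorm{[\Dirac_\lambda,a]}{}{L^2(\A_\infty,\varphi)} = \Lip_\infty(a)$. This completes the check that $\mathrm{id}_{\A_\infty}$ is a bridge builder, hence the theorem.

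The main obstacle --- and essentially the only non-formal step --- is the Lipschitz contractivity $\opnorm{[\Dirac_\lambda,\mathds{E}_n(a)]}{}{}\leq\opnorm{[\Dirac_\lambda,a]}{}{}$ of the conditional expectations: it is what makes $\mathds{E}_n(a)$, rather than some ad hoc truncation, the right approximant, and it is exactly where the compatibility of the filtration $(\A_n)_{n\in\N}$ with the grading $(Q_n)_{n\in\N}$ underlying $\Dirac_\lambda$ enters. Everything else is bookkeeping with (\ref{E-beta-eq}) and the identities recalled in the construction above. If one works only with a faithful state rather than a trace, the same argument goes through provided $\mathds{E}_n$ retains the module property used here; in the tracial case this is automatic, since $\mathds{E}_n$ is then the $\varphi$-preserving conditional expectation of $\A_\infty$ onto $\A_n$.
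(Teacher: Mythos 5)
Your proposal is correct and follows essentially the same route as the paper: you show that the identity is a bridge builder (and trivially a full quantum isometry) by taking $b=\mathds{E}_n(a)$, using the identity $[\Dirac_\lambda,\mathds{E}_n(a)]=P_n[\Dirac_\lambda,a]P_n$ for the Lipschitz contractivity and the telescoping estimate from (\ref{E-beta-eq}) for the norm bound, and then invoke \cite[Theorem 3.17]{FaLaPa}. You are in fact somewhat more careful than the paper's own argument, which asserts the commutator identity directly from (\ref{D-infinity-D-n-eq}): your justification via the module property of $\mathds{E}_n$ (automatic in the tracial case) and your identification of the norm limit of $(\mathds{E}_m(a))_{m\in\N}$ as $a$ fill in exactly the details the paper leaves implicit.
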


\begin{proof}
	We will prove that the identity of $\A_\infty$ is a bridge builder. First, we note that for all $a\in\bigcup_{n\in\N}\sa{\A_n} \subseteq\dom{\Lip_\infty}$, the sequence $(\mathds{E}_n(a))_{n\in\N}$ is eventually constant equal to $a$, and thus $\lim_{n\rightarrow\infty}\mathds{E}_n(a) = a$. If $a\in\sa{\A}$, then for all $\varepsilon > 0$, there exists $a'\in\bigcup_{n\in\N}\sa{\A_n}$ such that $\norm{a-a'}{\A_\infty}<\frac{\varepsilon}{2}$; let $N' \in \N$ such that $a' \in \A_n$ for all $n\geq N'$. Then
	\begin{align*}
	\norm{a-\mathds{E}_n(a)}{\A_\infty}
	&\leq\norm{a-a'}{\A_\infty} + \norm{a'-\mathds{E}_n(a')}{\A_\infty} + \norm{\mathds{E}_n(a'-a)}{\A_\infty} \\
	&<\frac{\varepsilon}{2} + 0 + \frac{\varepsilon}{2} = \varepsilon \text.
	\end{align*}
	Thus $\lim_{n\rightarrow\infty}\norm{\mathds{E}_n(a)-a}{\A_\infty} = 0$ for all $a\in\A$.
	
	Now, fix $\varepsilon > 0$. Let $N\in\N$ such that, if $n\geq N$, then $\sum_{n\geq N}\beta_m < \varepsilon$. For all $n\in\Nbar$, let $\Lip_n$ be defined as $\Lip_{\Dirac_n}$.

	Let $a\in\dom{\Lip_\infty}$. By Equation (\ref{D-infinity-D-n-eq}), we have
	\begin{equation*}
	[\Dirac_n,\mathds{E}_n(a)] = P_n[\Dirac,a]P_n,
	\end{equation*}
	so $\Lip_n(\mathds{E}_n(a)) \leq \Lip_\infty(a)$. On the other hand,
	\begin{equation*}
	\norm{a-\mathds{E}_n(a)}{\A} \leq \sum_{k=n}^\infty \norm{\mathds{E}_{k+1}(a)-\mathds{E}_k(a)}{\A} \leq \Lip_\infty(a) \sum_{k=n}^\infty \beta_k < \varepsilon\Lip_\infty(a) \text.
	\end{equation*}
	
	Moreover, if $a\in\dom{\Lip_n}$, then we simply note that $\Lip_\infty(a) = \Lip_n(a)$ and $\norm{a-a}{\A_\infty} = 0$.
	
	We have shown that the identity is a bridge builder; of course it is also a full quantum isometry. Therefore, by \cite[Theorem 3.17]{FaLaPa}, we have the claimed convergence.
\end{proof}

Now, assume that our state $\varphi$ is in fact a tracial state. Theorem 3.1 of \cite{Conti21} gives us the following description of the group $\smallIso(\A_\infty,L^2(\A_\infty,\varphi),\Dirac_\lambda)$:
\begin{equation*}
\smallIso (\A_\infty, L^2(\A_\infty,\varphi), \Dirac_\lambda) = \{ \alpha \in \Aut{\A_\infty} :  \varphi \circ \alpha = \varphi, \forall n \in \N \quad \alpha(\A_n) = \A_n \} \text.
\end{equation*}
Let thus $G_\infty$ be any closed subgroup of $\smallIso(\A_\infty,L^2(\A_\infty,\varphi),\Dirac_\lambda)$, and if $G_n \coloneqq \{ \alpha_{|\A_n} : \alpha \in G_\infty\}$, then by Theorem (\ref{compact-Iso-thm}):
\begin{itemize}
\item $G_n$ is a compact subgroup of $\smallIso(\A_n,L^2(\A_n,\varphi),\Dirac_n)$ --- indeed, it is a compact subgroup of $\LargeIso(\A_n,\Lip_n)$, and moreover, it is immediate that if $\alpha\in G_n$, then $\alpha(\A_k)=\A_k$ for all $k\in\{0,\ldots,n\}$, so by the same observation as \cite[Theorem 3.1]{Conti21}, $\alpha\in\smallIso(\A_n,L^2(\A_n,\varphi),\Dirac_n)$;
\item we have $\lim_{n\rightarrow\infty} \covpropinquity{}((\A_n,\Lip_n,G_n),(\A_\infty,\Lip_\infty,G_\infty)) = 0 \text.$
\end{itemize}

In the special case when  $G_\infty = \smallIso(\A_\infty,L^2(\A_\infty,\varphi),\Dirac_\lambda)$, then $\smallIso(\A_\infty,L^2(\A_\infty,\varphi),\Dirac_\lambda)$ is in particular the limit, for $\Upsilon$, of the groups obtained by restriction of the elements of $G_\infty$ to $\A_n$. Since $\Upsilon$ is a metric on proper metric groups up to isometric isomorphism, we note that if $(H_n)_{n\in\N}$ is any sequence of proper metric groups converging to $\smallIso(\A_\infty,L^2(\A_\infty,\varphi),\Dirac_\lambda)$, we have $\lim_{n\rightarrow\infty}\Upsilon(H_n,G_n) = 0$. It is also noteworthy that the metric $\KantorovichDist{\Lip_n}$ on  $\smallIso(\A_\infty,L^2(\A_\infty,\varphi),\Dirac_\lambda)$ is bi-invariant for all $n\in\Nbar$, and that the restriction of $\Upsilon$ to the class of proper monoids endowed with bi-invariant metrics is complete.

\medskip

Moreover, if $\A_\infty$ is, in fact, a UHF algebra, with $\A_n = \otimes_{j=0}^n \alg{M}(\delta_j)$ where $\delta_0=1$, $(\delta_n)_{n\in\N}$ a sequence of natural numbers, and $\alg{M}(d)$ is the C*-algebra of $d\times d$ matrices, then for all $n\in\N$,
\begin{equation*}
\smallIso(\A_n,L^2(\A_n,\varphi),(\Dirac_\lambda)_{|\A_n}) = \left\{ \otimes_{j=0}^n \AdRep{u_j} : \forall j \in \{0,\ldots,d\} \quad u_j\in\alg{M}(\delta_j), u_j^\ast=u_j^{-1} \right\} \text,
\end{equation*}
and
\begin{equation*}
\smallIso(\A_\infty,L^2(\A_\infty,\varphi),\Dirac_\lambda) = \left\{ \alpha \in \Aut{\A_\infty} : \alpha_{|\A_n} \in \smallIso(\A_n,L^2(\A_n,\varphi),\Dirac_\lambda) \right\} \text.
\end{equation*}
Therefore, we immediately conclude, from Theorem (\ref{thm:iso-conv}), that
\begin{equation*}
\lim_{n\rightarrow\infty} \covpropinquity{}((\A_n,\Lip_n,\smallIso(\A_n,L^2(\A_n,\varphi),\Dirac_n), (\A_\infty,\Lip_\infty,\smallIso(\A_\infty,L^2(\A_\infty,\varphi),\Dirac_\lambda))) = 0 \text.
\end{equation*}

\subsection{Dual Actions and Inductive Limits}\label{sec:dual-actions}

We prove that the dual actions on such C*-algebras as noncommutative solenoids and Bunce-Deddens algebras are limits of dual actions on other group C*-algebras, when we endow them with the metric spectral triples introduced in \cite{FaLaPa}. Let $G_\infty$ be a discrete Abelian group, and $(G_n)_{n\in\N}$ be an increasing sequence of subgroups of $G_\infty$ such that $G_\infty = \bigcup_{n\in\N} G_n$. For each $n\in \Nbar$, let $\widehat{G_n}$ be the Pontryagin dual of $G_n$; in particular, $\widehat{G_n}$ is a compact group and $\widehat{G_\infty}$ is the projective limit of $(\widehat{G_n})_{n\in\N}$.

Let $\sigma$ be any $\T$-valued $2$-cocycle on $G_\infty$; we will denote the restriction of $\sigma$ to $G_n$ again by $\sigma$, and note that it remains a $2$-cocycle on $G_n$, for all $n\in\N$. As explained in \cite[Section 4.1]{FaLaPa}, we can identify the twisted convolution C*-algebra $C^\ast(G_n,\sigma)$ with a C*-subalgebra of $C^\ast(G_\infty,\sigma)$, with $1 \in C^\ast(G_n,\sigma)$, for all $n\in\N$, and moreover: $C^\ast(G_\infty,\sigma) = \closure{\bigcup_{n\in\N}C^\ast(G_n,\sigma)}$. We denote the dual action of $\widehat{G_n}$ on $C^\ast(G_n,\sigma)$ by $\alpha_n$ for all $n\in\Nbar$.

We fix a finite dimensional space $E$ and two unitaries $\gamma_1$ and $\gamma_2$ acting on $E$ such that $\gamma_1^2=\gamma_2^2=1$ and $\gamma_1\gamma_2 =- \gamma_2\gamma_1$. We now denote, the space of all functions $\xi : G_\infty \rightarrow E$ such that $\sum_{g \in G_\infty} \norm{\xi(g)}{E}^2 < \infty$ by $\Hilbert_\infty \coloneqq \ell^2(G_\infty,E)$, and for each $n\in\N$, we identify in the obvious way $\ell^2(G_n,E)$ with the closed subspace $\Hilbert_n \coloneqq \{ \xi \in \ell^2(G_\infty,E) : \forall g \notin G_n :\quad \xi(g) = 0 \}$ of $\ell^2(G_\infty,E)$. 

Now, let $\mathds{L}_H$ be a length function over $G_\infty$ such that $(G_n)_{n\in\N}$ converges, in the Hausdorff distance induced by $\mathds{L}_H$, to $G_\infty$. Lastly, for all $g \in G_\infty$, let $\mathds{F}(g) \coloneqq \mathrm{scale}(\min\{n \in \N : g \in G_n\})$, where $\mathrm{scale} : G_\infty \mapsto [0,\infty)$ be some strictly increasing function. We then define
\begin{equation*}
  \dom{\Dirac_\infty} \coloneqq \left\{ \xi \in \ell^2(G_\infty,E) : \sum_{g \in G_\infty} (\mathds{L}_H(g)^2 + \mathds{F}(g)^2)\norm{\xi(g)}{E}^2 < \infty \right\}
\end{equation*}
and, for all $\xi \in \ell^2(G_\infty,E)$,
\begin{equation*}
  \Dirac_\infty\xi : g \in G_\infty \mapsto \mathds{L}_H(g) \gamma_1\xi(g) + \mathds{F}(g)\gamma_2\xi(g) \text.
\end{equation*}
Moreover, for each $n\in\N$, we let $\dom{\Dirac_n} \coloneqq \dom{\Dirac_\infty}\cap\Hilbert_n$ and we let $\Dirac_n$ be the restriction of $\Dirac_\infty$ to $\dom{\Dirac_n}$.

The triples $(C^\ast(G_n,\sigma),\Hilbert_n,\Dirac_n)$ are even spectral triples for all $n\in\Nbar$ by \cite[Lemma 4.7]{FaLaPa}. In certain cases, namely when $\mathds{L} \coloneqq \mathds{L}_H + \mathds{F}$ is a length function over $G_\infty$ with the doubling property in the sense of \cite{Rieffel17}, these spectral triples are actually metric, by \cite[Lemma 4.15]{FaLaPa}, and moreover, the identity is a bridge builder for this data by \cite[Theorem 4.16]{FaLaPa}. So, in particular,
\begin{equation*}
  \lim_{n\rightarrow\infty} \spectralpropinquity{}((C^\ast(G_n,\sigma),\Hilbert_n,\Dirac_n),(C^\ast(G_\infty,\sigma),\Hilbert_\infty,\Dirac_\infty)) = 0 \text.
\end{equation*}
\medskip

Fix $n\in\Nbar$. As noted in \cite{FaLaPa}, if we define, for all $\omega \in \widehat{G_n}$, the unitary $\nu^z$ on $\ell^2(G_n,E)$ by setting for all $\xi \in \ell^2(G_n,E)$:
\begin{equation*}
  \nu_n^\omega\xi : g \in G_n \mapsto \omega(g) \xi(g) \text,
\end{equation*}
then $\nu_n^{\omega} a \nu_n^{\omega\ast} = \alpha_n^\omega(a)$ for all $a\in C^\ast(G_n,\sigma)$. Moreover, $u_n^\omega$ commutes with $\Dirac_n$, and thus for all $\omega\in \widehat{G_n}$, we have $\alpha_n^\omega \in \smallIso(C^\ast(G_n,\sigma),\Hilbert_n,\Dirac_n)$.

Let $f_n : \widehat{G_\infty} \rightarrow \widehat{G_n}$ be the surjective group homomorphism induced by duality from the inclusion $G_n\hookrightarrow G_\infty$ --- namely, $f_n$ is the restriction map from $\widehat{G_\infty}$ to $\widehat{G_n}$. By construction, $\alpha_n^{f(\omega)}$ is the restriction of $\alpha_\infty^\omega$ to $C^\ast(G_n,\sigma)$ since if $f \in C_c(G_n)$, then $\alpha_n^{f(\omega)}(f) : g \in G_n \mapsto f_n(\omega)(g) \xi(g) = \omega(g)\xi(g) = \alpha_\infty^\omega(f)(g)$, and by density of $C_c(G_n)$ in $C^\ast(G_n,\sigma)$ and continuity of the dual action. Therefore, if $\omega\in \widehat{G_\infty}$, then $\alpha_\infty^\omega$ restricts to an element of $\smallIso(C^\ast(G_n,\sigma),\Hilbert_n,\Dirac_n)$ for all $n\in\N$.

If we endow $\widehat{G_n}$ with the distance $g,g' \in \widehat{G_n} \mapsto \delta_n(g,g') \coloneqq \KantorovichDist{\Lip_n}(\alpha_n^g,\alpha_n^{g'})$, then $\delta_n$ induces a compact topology on $\widehat{G_n}$ which equals  the usual topology on $\widehat{G_n}$. For,  $\KantorovichDist{\Lip_n}$ induces the topology of point-wise convergence. Since the dual action $\alpha_n$ is strongly continuous,  
it follows that $\delta_n$ is continuous in the standard topology. Therefore, since the topology induced by $\delta_n$ is Hausdorff and coarser than the usual compact topology of  $\widehat{G_n}$, the distance $\delta_n$ metrizes the usual topology of the Pontryagin dual $\widehat{G_n}$ of $G_n$.

Therefore, we conclude from Theorem (\ref{thm:iso-conv}) that:

\begin{equation*}
  \lim_{n\rightarrow\infty} \covpropinquity{}((C^\ast(G_n,\sigma),\Lip_n,\widehat{G_n},\alpha_n),(C^\ast(G_\infty,\sigma),\Lip_\infty,\widehat{G_\infty},\alpha_\infty)) = 0 \text.
\end{equation*}

We thus obtain an example of convergence of special closed subgroups of $\smallIso(C^\ast(G_n,\sigma),\allowbreak \Hilbert_n,\Dirac_n)$.

\medskip

Particular examples of this situation are the noncommutative solenoids, where $G_\infty \coloneqq \left(\Z\left[\frac{1}{p}\right]\right)^d$ for $d\in\N\setminus\{0,1\}$, and for all $n\in\N$, we set $G_n \coloneqq \left\{ \frac{1}{p}\Z \right\}^d$, for any choice of a prime number $p$, and we can choose any $2$-cocycle of $G_\infty$. In this case, $\widehat{G_n}$ is a $2$-torus, and $\widehat{G_\infty}$ is ${\solenoid_p}^2$, where $\solenoid_p$ is the solenoid group $\left\{ (z_n)_{n\in\N} \in \T^\N : \forall n\in\N \quad z_{n+1}^p = z_n \right\}$.

Other examples include the Bunce-Deddens algebras, where $G_\infty \coloneqq \Z(p^\infty)\times \Z$ and $G_n \coloneqq \faktor{\Z}{p^n\Z}\times \Z$, again for any prime number $p$, and where $\Z(p^\infty) \coloneqq \{z\in \C : \exists n \in \N \quad z^{(p^n)} = 1 \}$ is the Pr{\"u}fer $p$-group, and $\faktor{\Z}{p^n\Z}$ is the group of $p^n$-th roots of unity in $\C$. We choose here the $2$-cocycle $\sigma:((\zeta,z),(\omega,y)) \in G_\infty^2 \mapsto \omega^z$. We refer to \cite{FaLaPa} for the details on these constructions. In this case, $\widehat{G_n}$ is the group $\faktor{\Z}{p\Z}\times\T$, and $\widehat{G_\infty}$ is the group $\Z_p\times\T$ where $\Z_p$ is the group of $p$-adic integers.

\vfill
\end{document}